\documentclass[12pt]{amsart}

\usepackage{fullpage}
\usepackage{amsfonts,amscd}
\usepackage{amssymb}
\usepackage{url}
\usepackage{graphicx}
\usepackage{tikz}
\usepackage[rightcaption]{sidecap}
\sidecaptionvpos{figure}{c}

\usepackage[english]{babel}

\allowdisplaybreaks

\theoremstyle{plain}
\newtheorem{theorem}                {Theorem}      [section]
\newtheorem{proposition}  [theorem]  {Proposition}
\newtheorem{corollary}    [theorem]  {Corollary}
\newtheorem{lemma}        [theorem]  {Lemma}

\theoremstyle{definition}

\newtheorem{remark}       [theorem]  {Remark}

\setlength{\parindent}{0.0cm}

\numberwithin{equation}{section}

\def \R{{\mathbb R}}

\def \s {{\mathbb S}}
\def \h {{\mathbb H}}

\def \Hy{{\mathbb H}}

\def \hor{\mathcal H}

\usepackage{color}

\def \link {~}
\def \1 {\`}

\DeclareMathOperator{\grad}{grad}
\DeclareMathOperator{\trace}{trace}
\numberwithin{equation}{section}

\title[Polyharmonic surfaces in $3$-dimensional homogeneous spaces]{Polyharmonic surfaces in $3$-dimensional homogeneous spaces}

\author{S.~Montaldo}
\address{Universit\`a degli Studi di Cagliari\\
Dipartimento di Matematica e Informatica\\
Via Ospedale 72\\
09124 Cagliari, Italia}
\email{montaldo@unica.it}

\author{C.~Oniciuc}
\address{Faculty of Mathematics\\ ``Al.I. Cuza'' University of Iasi\\
Bd. Carol I no. 11 \\
700506 Iasi, ROMANIA}
\email{oniciucc@uaic.ro}

\author{A.~Ratto}
\address{Universit\`a degli Studi di Cagliari\\
Dipartimento di Matematica e Informatica\\
Via Ospedale 72\\
09124 Cagliari, Italia}
\email{rattoa@unica.it}

\usepackage{hyperref}
\begin{document}
\begin{abstract}
In the first part of this paper we shall classify proper triharmonic isoparametric surfaces in $3$-dimensional homogeneous spaces (Bianchi-Cartan-Vranceanu spaces, shortly BCV-spaces). We shall also prove that triharmonic Hopf cylinders are necessarily CMC. In the last section we shall determine a complete classification of CMC $r$-harmonic Hopf cylinders in BCV-spaces, $r \geq3$. This result ensures the existence, for suitable values of $r$, of an ample family of new examples of $r$-harmonic surfaces in BCV-spaces.
\end{abstract}

\subjclass[2010]{Primary: 58E20; Secondary: 53C42, 53C43.}

\keywords{Triharmonic maps, polyharmonic maps, Bianchi-Cartan-Vranceanu spaces, $3$-dimensional homogeneous spaces}

\thanks{The authors S.M. and A.R. are members of the Italian National Group G.N.S.A.G.A. of INdAM. The author C.O. was supported by a project funded by the Ministry of
Research and Innovation within Program 1 - Development of the national RD system, Subprogram 1.2 - Institutional Performance - RDI excellence funding projects, Contract no. 34PFE/19.10.2018.}

\maketitle

\section{Introduction}\label{Intro}
In order to introduce the geometrical setting of this paper we recall that
\textit{harmonic maps} are the critical points of the {\em energy functional}
\begin{equation}\label{energia}
E(\varphi)=\frac{1}{2}\int_{M}\,|d\varphi|^2\,dV \, ,
\end{equation}
where $\varphi:M\to N$ is a smooth map between two Riemannian
manifolds $(M,g)$ and $(N,h)$. A map $\varphi$ is harmonic if it is a solution of the Euler-Lagrange system of equations associated to \eqref{energia}, i.e.,
\begin{equation}\label{harmonicityequation}
  - d^* d \varphi =   {\trace} \, \nabla d \varphi =0 \, .
\end{equation}
The left member of \eqref{harmonicityequation} is a vector field along the map $\varphi$ or, equivalently, a section of the pull-back bundle $\varphi^{-1} TN$: it is called {\em tension field} and denoted $\tau (\varphi)$. In addition, we recall that if $\varphi$ is an \textit{isometric immersion}, then $\varphi$ is a harmonic map if and only if the immersion $\varphi$ defines a minimal submanifold of $N$ (see \cite{MR703510, MR1363513} for background). Let us denote $\nabla^M$, $\nabla^N$ and $\nabla^{\varphi}$ the induced connections on the bundles $TM$, $TN$ and $\varphi ^{-1}TN$ respectively. The \textit{rough Laplacian} on sections of $\varphi^{-1}  TN$, denoted $\overline{\Delta}$, is defined by
\begin{equation} \label{roughlaplacian}
    \overline{\Delta}=d^* d =-\sum_{i=1}^m\left(\nabla^{\varphi}_{e_i}
    \nabla^{\varphi}_{e_i}-\nabla^{\varphi}_
    {\nabla^M_{e_i}e_i}\right)\,,
\end{equation}
where $\{e_i\}_{i=1}^m$ is a local orthonormal frame field tangent to $M$. 

Now, in order to define the notion of an $r$-harmonic map, we consider the following family of functionals which represent a version of order $r$ of the classical energy \eqref{energia}. 

If $r=2s$, $s \geq 1$:
\begin{eqnarray}\label{2s-energia}
E_{2s}(\varphi)&=& \frac{1}{2} \int_M \, \langle \, \underbrace{(d^* d) \ldots (d^* d)}_{s\, {\rm times}}\varphi, \,\underbrace{(d^* d) \ldots (d^* d)}_{s\, {\rm times}}\varphi \, \rangle_{_N}\, \,dV \nonumber\\ 
&=& \frac{1}{2} \int_M \, \langle \,\overline{\Delta}^{s-1}\tau(\varphi), \,\overline{\Delta}^{s-1}\tau(\varphi)\,\rangle_{_N} \, \,dV\,.
\end{eqnarray}
In the case that $r=2s+1$, $s\geq 0$:
\begin{eqnarray}\label{2s+1-energia}
E_{2s+1}(\varphi)&=& \frac{1}{2} \int_M \, \langle\,d\underbrace{(d^* d) \ldots (d^* d)}_{s\, {\rm times}}\varphi, \,d\underbrace{(d^* d) \ldots (d^* d)}_{s\, {\rm times}}\varphi\,\rangle_{_N}\, \,dV\nonumber \\
&=& \frac{1}{2} \int_M \,\sum_{j=1}^m \langle\,\nabla^\varphi_{e_j}\, \overline{\Delta}^{s-1}\tau(\varphi), \,\nabla^\varphi_{e_j}\,\overline{\Delta}^{s-1}\tau(\varphi)\, \rangle_{_N} \, \,dV \,.
\end{eqnarray}
We say that a map $\varphi$ is \textit{$r$-harmonic} if, for all variations $\varphi_t$,
$$
\left .\frac{d}{dt} \, E_{r}(\varphi_t) \, \right |_{t=0}\,=\,0 \,\,.
$$
This condition is equivalent to the vanishing of the $r$-tension field $\tau_r(\varphi)$. We recall that the expressions which describe the $r$-tension field of a general map $\varphi:M \to N$ between two Riemannian manifolds were computed by Maeta (see \cite{MR2869168}) and are the following:
\begin{eqnarray}\label{2s-tension}
\tau_{2s}(\varphi)&=&\overline{\Delta}^{2s-1}\tau(\varphi)-R^N \left(\overline{\Delta}^{2s-2} \tau(\varphi), d \varphi (e_i)\right ) d \varphi (e_i) \nonumber\\ 
&&  - \sum_{\ell=1}^{s-1}\, \left \{R^N \left( \nabla^\varphi_{e_i}\,\overline{\Delta}^{s+\ell-2} \tau(\varphi), \overline{\Delta}^{s-\ell-1} \tau(\varphi)\right ) d \varphi (e_i)  \right .\\ \nonumber
&& \qquad \qquad  -\, \left . R^N \left( \overline{\Delta}^{s+\ell-2} \tau(\varphi),\nabla^\varphi_{e_i}\, \overline{\Delta}^{s-\ell-1} \tau(\varphi)\right ) d \varphi (e_i)  \right \} \,\, ,
\end{eqnarray}
where $\overline{\Delta}^{-1}=0$ and $\{e_i\}_{i=1}^m$ is a local orthonormal frame field tangent to $M$ (the sum over $i$ is not written but understood). Similarly,
\begin{eqnarray}\label{2s+1-tension}
\tau_{2s+1}(\varphi)&=&\overline{\Delta}^{2s}\tau(\varphi)-R^N \left(\overline{\Delta}^{2s-1} \tau(\varphi), d \varphi (e_i)\right ) d \varphi (e_i)\nonumber \\ 
&&  -\sum_{\ell=1}^{s-1}\, \left \{R^N \left( \nabla^\varphi_{e_i}\,\overline{\Delta}^{s+\ell-1} \tau(\varphi), \overline{\Delta}^{s-\ell-1} \tau(\varphi)\right ) d \varphi (e_i)  \right .\\ \nonumber
&& \qquad \qquad  -\, \left . R^N \left( \overline{\Delta}^{s+\ell-1} \tau(\varphi),\nabla^\varphi_{e_i}\, \overline{\Delta}^{s-\ell-1} \tau(\varphi)\right ) d \varphi (e_i)  \right \} \\ \nonumber
&& \,-\,R^N \Big( \nabla^\varphi_{e_i}\,\overline{\Delta}^{s-1} \tau(\varphi), \overline{\Delta}^{s-1} \tau(\varphi)\Big ) d \varphi (e_i)\,\,. 
\end{eqnarray}
If $r=1$, the functional \eqref{2s+1-energia} is just the energy. In the case that $r=2$, the functional \eqref{2s-energia} is called \textit{bienergy} and its critical points are the so-called \textit{biharmonic maps}. At present, a very ample literature on biharmonic maps is available and, again, we refer to \cite{MR4265170} and references therein for an introduction to this topic. 

More generally, the \textit{$r$-energy functionals} $E_r(\varphi)$ defined in \eqref{2s-energia}, \eqref{2s+1-energia} have been intensively studied (see \cite{MR4106647, MR2869168, MR3403738, MR3371364, MOR-Israel, MR3711937, MR3790367}, for instance).  
Inspection of the Euler-Lagrange equations for $E_r(\varphi)$ shows that a harmonic map is always $r$-harmonic for any $r\geq 2$. When the target manifold is nonflat, we use to call an $r$-harmonic map  {\it proper} if it is not harmonic (similarly, an $r$-harmonic submanifold, i.e., an $r$-harmonic isometric immersion, is {\it proper} if it is not minimal). As a general fact, when the ambient space has nonpositive sectional curvature there are several results which assert that, under suitable conditions, an $r$-harmonic submanifold is minimal (see \cite{MR3403738} and \cite{MR3371364}, for instance).

Things drastically change when the ambient space is positively curved. Let us denote by  $\s^{m+1}$ the sphere $\s^{m+1}(1)$ of radius $1$. Moreover, let $A$ be the shape operator of $M^m$ into $\s^{m+1}$ and ${\mathbf H}=f \eta$ the mean curvature vector field, where $\eta$ is the unit normal vector field and $f$ is the mean curvature function. Throughout the whole paper, when we write that $M^m$ is a CMC hypersurface we mean that $f$ is a constant which will be denoted by $\alpha$. 

In \cite{Ou-Pacific-2010} Ou derived the equation for biharmonic hypersurfaces in a generic Riemannian manifold. More precisely, he proved: 
\begin{theorem}\cite{Ou-Pacific-2010}\label{MTH}
Let $\varphi:M^{m}\to N^{m+1}$ be an isometric immersion
of codimension-one with mean curvature vector ${\mathbf H}=f \eta $. Then
$\varphi$ is biharmonic if and only if:
\begin{equation}\label{BHEq}
\begin{cases}
\Delta f+f |A|^{2}-f{\rm
Ric}^N(\eta,\eta)=0,\\
 2A\,({\rm grad}\,f) +m f {\rm grad}\, f
-2\, f \,({\rm Ric}^N\,(\eta))^{\top}=0,
\end{cases}
\end{equation}
where ${\rm Ric}^N : T_qN\longrightarrow T_qN$ denotes the Ricci
operator of the ambient space defined by $\langle {\rm Ric}^N\, (Z),
W\rangle={\rm Ric}^N (Z, W)$ and  $A$ is the shape operator of the
hypersurface with respect to the unit normal vector $\eta$. 
\end{theorem}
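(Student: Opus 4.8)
The plan is to derive the biharmonic hypersurface equation directly from the general biharmonic map equation, i.e. from $\tau_2(\varphi) = 0$, by decomposing the bitension field into its tangential and normal components relative to the immersion $\varphi \colon M^m \to N^{m+1}$. Recall from \eqref{2s-tension} with $s=1$ that $\tau_2(\varphi) = \overline{\Delta}\,\tau(\varphi) - R^N(\tau(\varphi), d\varphi(e_i))\,d\varphi(e_i)$, where $\{e_i\}$ is a local orthonormal frame on $M$. For an isometric immersion the tension field is $\tau(\varphi) = m\mathbf{H} = mf\eta$, so the whole computation reduces to evaluating $\overline{\Delta}(f\eta)$ and the curvature term $R^N(f\eta, d\varphi(e_i))d\varphi(e_i)$, and then splitting each into a part tangent to $M$ and a part along $\eta$.

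First I would compute the rough Laplacian $\overline{\Delta}(f\eta) = -\sum_i \left( \nabla^\varphi_{e_i}\nabla^\varphi_{e_i}(f\eta) - \nabla^\varphi_{\nabla^M_{e_i}e_i}(f\eta) \right)$ using the Weingarten formula $\nabla^\varphi_X \eta = -A(X)$ (tangential, since $|\eta|=1$) and the Gauss formula to handle $\nabla^\varphi_X(AY)$. Expanding $\nabla^\varphi_{e_i}(f\eta) = (e_i f)\eta - fA(e_i)$ and differentiating once more, one collects: a normal term $(\Delta f)\eta$ where $\Delta$ is the (nonnegative, analyst's sign) Laplacian on $M$ used in the statement, together with a term $f|A|^2\eta$ coming from $\langle A(e_i), A(e_i)\rangle \eta$; and tangential terms of the form $2A(\grad f) + f\,\grad(\trace A) = 2A(\grad f) + mf\,\grad f$, using that $\trace A = mf$ and the Codazzi equation to symmetrize the derivative-of-$A$ terms (the Codazzi identity in $N$ contributes a curvature correction $\big(R^N(\cdot,\cdot)\cdot\big)^\top$ terms that must be tracked carefully). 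Second, I would expand the curvature term: $\sum_i R^N(f\eta, e_i)e_i = f\sum_i R^N(\eta, e_i)e_i$, whose normal component is $f\,\mathrm{Ric}^N(\eta,\eta)$ (since $\langle R^N(\eta,e_i)e_i, \eta\rangle$ summed over the frame $\{e_i\}$, completed by $\eta$, gives the Ricci curvature in the $\eta$ direction) and whose tangential component is $f\big(\mathrm{Ric}^N(\eta)\big)^\top$, i.e. $f$ times the tangential part of the Ricci operator applied to $\eta$.

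Combining the two computations and separating $\tau_2(\varphi)=0$ into its normal and tangential parts yields exactly the system \eqref{BHEq}: the $\eta$-component gives $\Delta f + f|A|^2 - f\,\mathrm{Ric}^N(\eta,\eta) = 0$, and the tangential component gives $2A(\grad f) + mf\,\grad f - 2f\big(\mathrm{Ric}^N(\eta)\big)^\top = 0$. Conversely, any $f$ and immersion satisfying the system make each component of $\tau_2(\varphi)$ vanish, hence $\varphi$ is biharmonic.

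The main obstacle is bookkeeping in the rough-Laplacian computation: correctly applying the Gauss and Weingarten formulas to second covariant derivatives in the pull-back bundle, and in particular using the Codazzi equation $(\nabla_X A)Y - (\nabla_Y A)X = -\big(R^N(X,Y)\eta\big)^\top$ to convert the antisymmetric part of $\nabla A$ into an ambient-curvature term, so that the $\big(\mathrm{Ric}^N(\eta)\big)^\top$ contributions from $\overline{\Delta}(f\eta)$ and from the curvature term $R^N(f\eta, e_i)e_i$ are combined with the correct signs and coefficients. Once the normal/tangential splitting is organized, the identification of the traces with $|A|^2$, $mf = \trace A$, $\mathrm{Ric}^N(\eta,\eta)$ and $\big(\mathrm{Ric}^N(\eta)\big)^\top$ is routine. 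I would also note that this is essentially the argument of Ou in \cite{Ou-Pacific-2010}, specialized and streamlined for the codimension-one case.
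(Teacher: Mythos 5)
Your proposal is correct and follows essentially the same route as Ou's original argument (which the paper cites rather than reproves): write $\tau(\varphi)=mf\eta$, split $\tau_2(\varphi)=\overline{\Delta}\tau-R^N(\tau,d\varphi(e_i))d\varphi(e_i)$ into normal and tangential parts via the Gauss--Weingarten formulas, and use the traced Codazzi identity so that the two $({\rm Ric}^N(\eta))^\top$ contributions combine into the coefficient $2$. This is exactly the computation the paper itself carries out, specialized to surfaces in BCV-spaces, in Lemma~\ref{Lemma-tecnico-1} and Lemma~\ref{Lemma-Delta-H}.
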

We point out that, contrary to \cite{Ou-Pacific-2010}, the sign convention for $\Delta$ in this paper is such that $\Delta f =-f''$ on $\R$. 
If the mean curvature $f$ is constant, say $f\equiv \alpha$, then the biharmonic equation reduces to
\begin{equation}\label{hn45}
\begin{cases}
-\alpha |A|^{2}+\alpha {\rm
Ric}^N(\eta,\eta)=0\\
 \, \alpha \,({\rm
Ric}^N\,(\eta))^{\top}=0\,,
\end{cases}
\end{equation}
from which we deduce that a non minimal CMC hypersurface $M^m$ is \textit{proper} biharmonic if and only if 
\begin{equation}\label{eq:bihar-cmc-hyper-general}
{\rm
Ric}^N(\eta)=|A|^2\eta \,.
\end{equation}
In the instance that $M^m$ is a hypersurface of $\s^{m+1}$ the biharmonic condition \eqref{eq:bihar-cmc-hyper-general} reduces to
\begin{equation}\label{eq:bihar-cmc-hyper-sn}
|A|^2-m=0 \,.
\end{equation}
As for the $r$-harmonic case,  condition \eqref{eq:bihar-cmc-hyper-sn} was generalized in \cite{MOR-Israel}:
\begin{theorem}\label{Th-existence-hypersurfaces-c>0}
Let $M^m$ be a non-minimal CMC hypersurface in $\s^{m+1}$ and assume that $|A|^2$ is constant. Then
$M^m$ is proper $r$-harmonic ($r \geq 3$) if and only if
\begin{equation}\label{r-harmonicity-condition-in-spheres}
|A|^4-m|A|^2-(r-2)m^2 \alpha^2=0 \,.
\end{equation}
\end{theorem}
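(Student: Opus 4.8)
The plan is to specialize the $r$-tension field formulas \eqref{2s-tension}, \eqref{2s+1-tension} to the situation of a CMC hypersurface $M^m \hookrightarrow \s^{m+1}$ with $|A|^2$ constant, using the known structure of the bitension field as a starting point. Since $f\equiv\alpha$ is constant and $|A|^2$ is constant, the tension field is $\tau(\varphi)={\mathbf H}=m\alpha\,\eta$ (up to sign conventions), so the first step is to compute $\overline{\Delta}\,\eta$ along the immersion. Using the Weingarten and Gauss formulas in $\s^{m+1}$ together with $\nabla f=0$, one finds $\overline{\Delta}\eta = |A|^2\eta$ plus a tangential term that vanishes precisely because $A(\grad f)=0$ and the $\top$-part of $\mathrm{Ric}^N(\eta)$ vanishes in the sphere; more carefully, the Simons-type identity and the constancy of $|A|^2$ force $\overline{\Delta}\eta=|A|^2\,\eta$ exactly, with no tangential component. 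This is the computational heart of the argument and I expect it to be the main obstacle: one must verify that all tangential contributions to $\overline{\Delta}^{k}\tau(\varphi)$ genuinely vanish at each stage, which relies on $|A|^2$ being constant (so that $\grad |A|^2=0$) rather than merely $f$ being constant.

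Granting $\overline{\Delta}\eta = |A|^2\eta$, an immediate induction gives $\overline{\Delta}^{k}\tau(\varphi) = m\alpha\,|A|^{2k}\,\eta$ for all $k\ge 0$, and likewise $\nabla^\varphi_{e_j}\overline{\Delta}^{k}\tau(\varphi) = m\alpha\,|A|^{2k}\,\nabla^\varphi_{e_j}\eta = -m\alpha\,|A|^{2k}\,A(e_j)$, a purely tangential vector field. The next step is to insert these into \eqref{2s-tension} or \eqref{2s+1-tension}. The leading term contributes $\overline{\Delta}^{r-1}\tau(\varphi) = m\alpha\,|A|^{2(r-1)}\,\eta$. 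For the curvature terms we use that $\s^{m+1}$ has constant sectional curvature $1$, so $R^N(X,Y)Z = \langle Y,Z\rangle X - \langle X,Z\rangle Y$; this makes every curvature term explicitly computable. The term $R^N(\overline{\Delta}^{r-2}\tau(\varphi), d\varphi(e_i))d\varphi(e_i)$ yields $m\alpha\,|A|^{2(r-2)}\,(\langle d\varphi(e_i),d\varphi(e_i)\rangle\eta - \langle\eta,d\varphi(e_i)\rangle d\varphi(e_i)) = m^2\alpha\,|A|^{2(r-2)}\,\eta$, since $\eta\perp d\varphi(e_i)$ and $\sum_i\langle e_i,e_i\rangle = m$.

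It remains to handle the sum $\sum_{\ell}$ of curvature terms (and, in the odd case, the extra final term). Each summand pairs $\nabla^\varphi_{e_i}\overline{\Delta}^{(\cdot)}\tau(\varphi)$, which is tangential and proportional to $A(e_i)$, against $\overline{\Delta}^{(\cdot)}\tau(\varphi)$, which is normal, proportional to $\eta$; applying $R^N(X,Y)Z=\langle Y,Z\rangle X-\langle X,Z\rangle Y$ with one slot normal and the relevant slot tangent, and then summing over $i$, I expect each such summand to telescope or cancel in pairs, or to reduce to a multiple of $\trace A\cdot(\text{something})$ that collapses using $\sum_i A(e_i)\cdot(\text{coefficient})$ — the key point being that these terms collectively contribute a controlled multiple of $\eta$. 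Collecting the normal components, the $r$-harmonicity equation $\tau_r(\varphi)=0$ becomes $m\alpha\,|A|^{2(r-2)}\bigl(|A|^4 - m|A|^2 - (r-2)m^2\alpha^2\bigr)\eta = 0$, and since $\varphi$ is non-minimal ($\alpha\neq 0$) we may divide through to obtain \eqref{r-harmonicity-condition-in-spheres}; conversely, if that polynomial identity holds then all components of $\tau_r(\varphi)$ vanish, giving proper $r$-harmonicity. The bookkeeping in the $\sum_\ell$ term — tracking exactly which power of $|A|^2$ multiplies $\eta$ and confirming the combinatorial factor is precisely $(r-2)$ — is the delicate part, and I would organize it by first doing $r=3$ and $r=4$ explicitly to fix the pattern, then the general even/odd cases by induction.
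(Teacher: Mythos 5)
This statement is quoted in the paper from \cite{MOR-Israel} and is not reproved here, but your route is exactly the standard one, and it is the same strategy the paper itself uses for the analogous computation for Hopf cylinders (Theorem~\ref{Th-r-harmonic-hopf-cilindri}): show $\overline{\Delta}\tau=|A|^2\tau$, iterate, and feed everything into Maeta's formulas \eqref{2s-tension}--\eqref{2s+1-tension} using $R^N(X,Y)Z=\langle Y,Z\rangle X-\langle X,Z\rangle Y$. Your computation of the two leading terms is correct, and the conclusion follows. Two small points on the part you left as ``expected''. First, the summands in the $\sum_{\ell}$ do not cancel in pairs --- they reinforce: with $\nabla^\varphi_{e_i}\overline{\Delta}^{k}\tau=-m\alpha|A|^{2k}A(e_i)$ one gets $\sum_i R^N(A(e_i),\eta)e_i=-(\trace A)\,\eta=-m\alpha\,\eta$ and $\sum_i R^N(\eta,A(e_i))e_i=+m\alpha\,\eta$, so each bracket $\{\,\cdot\,\}$ contributes $2m^3\alpha^3|A|^{2(r-3)}\eta$ (the powers of $|A|^2$ in the two slots always sum to $2(r-3)$, independently of $\ell$); there are $s-1$ such brackets, giving $2(s-1)=r-2$ when $r=2s$, and the extra final term in the odd case adds one more, giving $2(s-1)+1=r-2$ when $r=2s+1$. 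Second, the overall prefactor is $m\alpha|A|^{2(r-3)}$, not $|A|^{2(r-2)}$; this is harmless since non-minimality gives $\alpha\neq0$ and hence $|A|^2>0$, so dividing out is legitimate either way.
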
 
As an application of Theorem\link\ref{Th-existence-hypersurfaces-c>0}, several new examples of isoparametric $r$-harmonic hypersurfaces were illustrated in \cite{MOR-Israel}, where it was stressed that the value of $r$, $r\geq 2$, plays a crucial role when the ambient is positively curved. By contrast, when the target space form has nonpositive curvature, generally non-existence results are confirmed for all values of $r$, $r \geq 2$.
\vspace{1mm}

As a natural further step, in this paper we shall focus on the study of $r$-harmonic surfaces into $3$-dimensional homogeneous spaces with group of isometries of dimension $4$. 

It is well-known (see, e.g., \cite{Belkhelfa-altri-book-2000}, \cite{Caddeo-altri-Mediterr-2006}, \cite{Daniel-Comm-Math-Helv-2007}) that $3$-dimensional homogeneous spaces with group of isometries of dimension $4$ admit, as a canonical model, the so called  Bianchi-Cartan-Vranceanu  spaces (shortly, BCV-spaces)
\begin{equation}\label{CV}
M^3_{m,\ell}=\left(\bar{M}\times \R,g=\frac{dx^2+dy^2}{[1+m(x^2+y^2)]^2}+\Big[dz+\frac{\ell}{2}\,\frac{y
dx-x dy}{1+m(x^2+y^2)}\Big ]^2\right)\,,
\end{equation}
where $\bar{M}=\{(x,y)\in\R^2\colon 1+m(x^2+y^2)>0\}$.

The space $M^3_{m,\ell}$ is the total space of the following Riemannian submersion over a simply connected complete surface $M^2(4m)$ of constant curvature $4m$, see \cite{Daniel-Comm-Math-Helv-2007}: 
\begin{eqnarray}\label{RSM}
\pi:
M^3_{m,\ell}\longrightarrow  M^2(4m)=\left(\bar{M},h=\frac{dx^2+dy^2}{[1+m(x^2+y^2)]^2}\right),\;\;\;
\pi(x,y,z)=(x, y).
\end{eqnarray}
We point out that while in $\s^n(\rho)$ the letter $\rho$ indicates the radius, in $M^2(4m),\,\h^2(4m)$ the real number within the brackets represents the sectional curvature.

These BCV-spaces are also a model for Thurston's eight $3$-dimensional geometries with the exception of the
hyperbolic space $\Hy^3$ and $\rm Sol$. More precisely, they include the $3$-dimensional space forms $ \R ^3$ ($\ell=m=0$), $ \s^3(1/\sqrt{m})$ ($\ell^2=4m$), the product spaces $M^2(4m)\times \R$ ($\ell=0$), ${\rm Nil_3}$ ($m=0$), $\widetilde{SL}(2,\R)$ ($\ell\neq 0,\, m< 0$) and $SU(2)$ ($\ell\neq 0,\, m>0$, $\ell^2\neq 4m$). See Figure~\ref{diagram-BCVspaces} for a representation of the BCV-spaces with respect to the values of the parameters $\ell$ and $m$.

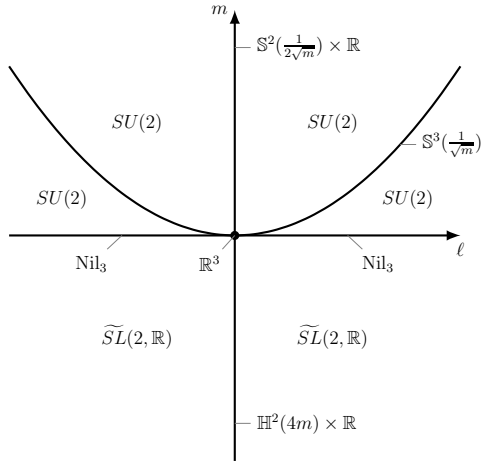
\begin{SCfigure}[50][h]
\label{diagram-BCVspaces}
\caption{Distribution of BCV-spaces w.r.t. to the values of $\ell$ and $m$.} 
\begin{tikzpicture}
\draw[thick, -latex] (-3,0) -- (3,0) node[below,scale=0.6] {$\ell$};
\draw[thick, -latex] (0,-3) -- (0,3) node[left,scale=0.6] {$m$};
\draw[scale=1, domain=-3:3, smooth, variable=\x,  thick] plot ({\x}, {0.25*\x*\x});
 \coordinate[pin={[pin distance=10,scale=0.6]300:${\rm Nil_3}$}] (r1) at (1.5,0);
  \coordinate[pin={[pin distance=10,scale=0.6]240:${\rm Nil_3}$}] (r1) at (-1.5,0);
  
\coordinate[pin={[pin distance=10,scale=0.6]0:$\s^3(\frac{1}{\sqrt{m}})$}] (r1) at (2.2,0.25*2.2*2.2);

\coordinate[pin={[pin distance=10,scale=0.6]0:$\s^2(\frac{1}{2\sqrt{m}})\times\R$}] (r1) at (0,2.5);
\coordinate[pin={[pin distance=10,scale=0.6]0:$\h^2(4m)\times\R$}] (r1) at (0,-2.5);

\node[below,scale=0.6] at (1.3,1.7) {$SU(2)$};
\node[below,scale=0.6] at (-1.3,1.7) {$SU(2)$};
\node[below,scale=0.6] at (2.3,0.7) {$SU(2)$};
\node[below,scale=0.6] at (-2.3,0.7) {$SU(2)$};
\filldraw [black] (0,0) circle (1.5pt);
 \coordinate[pin={[pin distance=10,scale=0.6]240:$\R^3$}] (r1) at (0,0);
\node[below,scale=0.6] at (1.3,-1.1) {$\widetilde{SL}(2,\R)$};
\node[below,scale=0.6] at (-1.3,-1.1) {$\widetilde{SL}(2,\R)$};
\end{tikzpicture}
\end{SCfigure}

In his paper \cite{Ou-J-Geom-Phys-2011}, Ou used equation (\ref{BHEq}) to study biharmonic surfaces in BCV-spaces. He first showed that
a totally umbilical biharmonic surface in any $3$-dimensional
Riemannian manifold has constant mean curvature. Then he used this to
show that the only totally umbilical proper biharmonic surface in
$3$-dimensional geometries is a part of $\s^2(1/\sqrt{2m})$ in $\s^3(1/\sqrt{m})$. 

Moreover, he proved the following characterization of CMC biharmonic surfaces:
\begin{theorem}\label{CVV}(see \cite{Ou-J-Geom-Phys-2011})
A CMC surface in a $3$-dimensional
 Bianchi-Cartan-Vranceanu space is proper biharmonic if and only if it is a
part of one of the following: 
\begin{enumerate}
\item[\rm (i)] $\s^2(\frac{1}{\sqrt{2m}})$ in $\s^3(\frac{1}{\sqrt{m}})$,
\item[\rm (ii)] $\s^1(\frac{1}{2\sqrt{2m}})\times \mathbb{R}$ in
$\s^2(\frac{1}{2\sqrt{2m}})\times \mathbb{R}$,
\item[\rm (iii)] a Hopf cylinder in $SU(2)$ with $4m-\ell^2>0$ over a circle
of radius $R=1/\sqrt{8m-\ell^2\:}$ in the base sphere
$M^2(4m)=\s^2(\frac{1}{2\sqrt{m}})$.
\end{enumerate}
\end{theorem}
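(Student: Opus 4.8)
The plan is to build the whole argument on the reduced biharmonicity condition \eqref{eq:bihar-cmc-hyper-general}: a non-minimal CMC surface $M^2$ in a BCV-space $N=M^3_{m,\ell}$ is proper biharmonic if and only if ${\rm Ric}^N(\eta)=|A|^2\eta$, where $\eta$ is the unit normal. The point is that the Ricci operator of a BCV-space is extremely rigid. Recalling the canonical orthonormal frame $\{E_1,E_2,E_3\}$ adapted to the submersion $\pi$, with $E_3=\xi$ the unit Killing field tangent to the fibres, one has $K(E_1,E_2)=4m-\tfrac34\ell^2$ and $K(E_1,E_3)=K(E_2,E_3)=\tfrac14\ell^2$, so that ${\rm Ric}^N$ is diagonal in this frame with eigenvalue $4m-\tfrac12\ell^2$ on the horizontal distribution $\mathcal H=\langle E_1,E_2\rangle$ and eigenvalue $\tfrac12\ell^2$ on $\langle\xi\rangle$. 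I would first establish (or quote) this, together with the two facts about Hopf cylinders used below: the fibres of $\pi$ are geodesics ($\nabla_\xi\xi=0$), and the O'Neill formula $\langle\nabla_{\tilde X}\xi,\tilde Y\rangle$ for horizontal lifts records the bundle curvature $\tfrac\ell2$.

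Writing $\nu=\langle\eta,\xi\rangle$ for the angle function of the surface and decomposing $\xi=\nu\,\eta+\xi^{\top}$, an elementary computation gives
\[
{\rm Ric}^N(\eta)=\Big(4m-\tfrac12\ell^2\Big)\eta+(\ell^2-4m)\,\nu\,\xi ,
\]
so that, via \eqref{hn45}, the biharmonic condition for a non-minimal CMC surface splits into a \emph{normal equation}
\[
|A|^2=4m-\tfrac12\ell^2+(\ell^2-4m)\,\nu^2
\]
and a \emph{tangential equation} $(\ell^2-4m)\,\nu\,\xi^{\top}=0$. The tangential equation is the crux of the classification.

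Suppose first $\ell^2\neq 4m$. Then at each point either $\nu=0$ or $\xi^{\top}=0$ (i.e.\ $\nu^2=1$), so $\nu$ takes values in $\{-1,0,1\}$ and, being continuous on the connected surface, is constant. If $\nu\equiv\pm1$ the tangent plane of $M^2$ coincides everywhere with $\mathcal H$; then $\mathcal H$ is involutive along $M^2$, forcing $\ell=0$, and the surface is (a part of) a slice $M^2(4m)\times\{c\}$ of $M^2(4m)\times\R$, which is totally geodesic and hence not proper — this subcase is excluded. If $\nu\equiv0$ then $\xi$ is everywhere tangent, so $M^2=\pi^{-1}(\gamma)$ is a Hopf cylinder over a curve $\gamma\subset M^2(4m)$; using $\nabla_\xi\xi=0$ and the O'Neill formula one finds, in the tangent frame $\{\tilde\gamma',\xi\}$, the shape operator
\[
A=\begin{pmatrix}\kappa_g & -\ell/2\\[2pt] -\ell/2 & 0\end{pmatrix},
\]
with $\kappa_g$ the geodesic curvature of $\gamma$, so that the CMC hypothesis means $\kappa_g$ is constant, $\alpha=\kappa_g/2$, and $|A|^2=\kappa_g^2+\tfrac12\ell^2$. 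Feeding this into the normal equation with $\nu=0$ yields $\kappa_g^2=4m-\ell^2$, which must be strictly positive since the surface is non-minimal; hence $m>0$, $M^2(4m)=\s^2(1/(2\sqrt m))$, and $\gamma$ is a circle. Converting geodesic curvature to the extrinsic radius $R$ of this circle gives $R=1/\sqrt{8m-\ell^2}$. For $\ell\neq0$ (so $m>0$ and $0<\ell^2<4m$, i.e.\ the ambient is $SU(2)$) this is case (iii); for $\ell=0$ it is case (ii).

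It remains to treat $\ell^2=4m$, i.e.\ $N=\s^3(1/\sqrt m)$, together with the degenerate $\R^3$ ($m=0$). Here the tangential equation is vacuous and the normal equation forces $|A|^2=2m$; when $m=0$ this gives $|A|=0$, excluded, so no proper example exists in $\R^3$. When $m>0$, a CMC surface with $|A|^2$ constant in a $3$-dimensional space form has constant principal curvatures, hence is isoparametric, so in $\s^3$ it is a sphere or a flat torus; imposing $|A|^2=2m$ and non-minimality eliminates the tori (they would be the minimal Clifford torus, by the Gauss equation) and leaves exactly $\s^2(1/\sqrt{2m})$, which is case (i). Finally, the ``if'' direction is a direct verification: for each of (i)--(iii) the surface is CMC, non-minimal, and satisfies ${\rm Ric}^N(\eta)=|A|^2\eta$ by the curvature and $|A|^2$ computations above. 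I expect the main obstacle to be the explicit geometry of Hopf cylinders — pinning down the off-diagonal term $-\ell/2$ of the shape operator (the interplay of the O'Neill tensor of $\pi$ with the second fundamental form) and getting the normalization between $\kappa_g$, the mean curvature, and the radius $R$ right; the remainder is a clean dichotomy dictated by the rigid Ricci operator of the BCV-space.
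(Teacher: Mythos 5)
This theorem is quoted in the paper from Ou--Wang \cite{Ou-J-Geom-Phys-2011} without proof, so there is no internal argument to compare against; judged on its own, your reconstruction is correct and follows what is essentially the standard (Ou--Wang) route. The key steps all check out: the reduction to ${\rm Ric}^N(\eta)=|A|^2\eta$ via \eqref{hn45}; the diagonalization of the Ricci operator of $M^3_{m,\ell}$ with eigenvalues $4m-\tfrac12\ell^2$ on the horizontal distribution and $\tfrac12\ell^2$ on $E_3$ (which follows from \eqref{BCV1} or \eqref{tensore-curvatura-general-expression}); and the observation that the tangential equation $(\ell^2-4m)\,\nu\,E_3^\top=0$ forces, when $4m\neq\ell^2$, the angle function $\nu$ into the discrete set $\{0,\pm1\}$ and hence to be constant by connectedness. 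The three resulting branches are handled correctly: $\nu^2\equiv1$ forces $\ell=0$ and a totally geodesic slice (excluded); $\nu\equiv0$ gives a Hopf cylinder with $|A|^2=\kappa_g^2+\tfrac12\ell^2$ and $\kappa_g^2=4m-\ell^2>0$, yielding cases (ii) and (iii); and $4m=\ell^2$ reduces to the space-form computation $|A|^2=2m$, which isolates $\s^2(1/\sqrt{2m})$ after discarding the (necessarily minimal) Clifford torus. Your computations dovetail with the paper's own machinery: Lemma~\ref{lemma-nablaXX} gives exactly your shape operator up to the paper's sign convention $\tau=-\kappa_g\eta$ (so $\alpha=-\kappa_g/2$ there, which is immaterial), and the case $r=2$ of Theorem~\ref{Th-r-harmonic-hopf-cilindri} together with \eqref{eq:link-kg-k} reproduces $\kappa_g^2=4m-\ell^2$ and $R=1/\sqrt{8m-\ell^2}$ precisely as you obtain them. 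One minor point of bookkeeping: your derivation places the base of case (ii) in $M^2(4m)=\s^2(1/(2\sqrt{m}))$ with the circle of radius $1/(2\sqrt{2m})$; the radius $1/(2\sqrt{2m})$ attached to the ambient $\s^2$ factor in the printed statement should be read accordingly.
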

\vspace{1mm}

The main aim of our paper is to investigate the existence of \textit{triharmonic} and, more generally, \textit{$r$-harmonic} surfaces in this geometric setting $(r \geq 3)$.

Our paper is organised as follows. In Section\link\ref{sec-results} we state our main results on triharmonic surfaces in BCV-spaces. These results will be proved in Sections\link\ref{sec-proofs} and \ref{sec-proofs2}. Finally, in Section\link\ref{sec-r-harmonic}, we shall determine a complete classification of proper CMC $r$-harmonic Hopf cylinders in Bianchi-Cartan-Vranceanu spaces, $r \geq3$. As an application, we shall be able to describe, for suitable values of $r$, an ample family of new examples of $r$-harmonic surfaces in BCV-spaces.

\section{Statement of the results on triharmonic surfaces in BCV-spaces}\label{sec-results}
In order to state our results, it is convenient to recall first some basic facts and terminology. 

For a Bianchi-Cartan-Vranceanu $3$-space given in (\ref{CV}), one can
easily check  that the vector fields
\begin{equation}\notag
E_{1}=F\frac{\partial}{\partial
x}-\frac{\ell y}{2}\frac{\partial}{\partial z},\quad E_{2}=F
\frac{\partial}{\partial y}+\frac{\ell x}{2}\frac{\partial}{\partial
z},\quad E_{3}=\frac{\partial}{\partial z},
\end{equation}
where $F=1+m(x^2+y^2)$, form a global orthonormal
frame field (see \cite{Caddeo-altri-Mediterr-2006, Ou-J-Geom-Phys-2011}).

Now, let $\gamma(s)=(x(s), y(s)),\,s \in I$ be a smooth curve in the base space $M^2(4m)$ of the Riemannian submersion \eqref{RSM}. Then the \textit{Hopf cylinder} $\Sigma_\gamma=\Sigma$ over the curve $\gamma$ is defined as
\begin{equation}\label{Hopf-cilindro}
\Sigma= \cup_{s\in I}\pi^{-1}(\gamma(s)) \,.
\end{equation}
Then the surface $\Sigma$ can
be parametrized as $r(s,t)=(x(s),y(s), t)$ since the fiber  of $\pi$
over a point $(x_0,y_0)$ is $\pi^{-1}(x_0,y_0)=\{(x_0,y_0, t): t\in
\R\}$.

It is convenient to assume that the base curve $\gamma$ is parametrized by arc length, i.e.,
\begin{equation*}\label{gamma-arc-length}
\frac{\dot{x}^2+\dot{y}^2}{F^2}= 1 \,.
\end{equation*}
Next, we define:
\begin{equation}\label{frame-field-adapted-Sigma}
X=\displaystyle{\frac{\dot{x}}{F}E_1+\frac{\dot{y}}{F}E_2}\,,\quad
\eta=\displaystyle{\frac{\dot{y}}{F}E_1-\frac{\dot{x}}{F}E_2} \,.
\end{equation}
Then the unit vector field $\eta$ is normal to the Hopf cylinder $\Sigma$ and $\{X,E_3,\eta \}$ is a global orthonormal frame field adapted to $\Sigma$.\\
 
Our first result is:
\begin{theorem}\label{Th-HopfcilindroCMC} Let $\Sigma$ be a triharmonic Hopf cylinder in a BCV-space $M^3_{m,\ell}$. Then $\Sigma$ is CMC.
\end{theorem}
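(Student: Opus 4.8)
The plan is to reduce the triharmonic system for a Hopf cylinder to a system of ODEs in the arc-length parameter $s$ of the base curve $\gamma$, and then to show that the geodesic curvature (equivalently, the mean curvature function $f$ of $\Sigma$) must be constant. First I would record the second fundamental form of a Hopf cylinder $\Sigma = \Sigma_\gamma$ with respect to the adapted frame $\{X, E_3, \eta\}$ of \eqref{frame-field-adapted-Sigma}: using the known Levi-Civita connection coefficients of the $E_i$-frame in a BCV-space, one computes $\nabla_X X$, $\nabla_X E_3$, $\nabla_{E_3} E_3$ and extracts their $\eta$-components. The outcome (which is classical, see e.g.\ \cite{Ou-J-Geom-Phys-2011}) is that $\Sigma$ has principal directions $X$ and $E_3$ with principal curvatures of the form $\kappa_1 = \kappa(s) - \ell$ (the "twisted" geodesic curvature of $\gamma$, shifted by $\ell$) and $\kappa_2 = $ a constant (essentially $0$ or a multiple of $\ell$ depending on conventions), so that the mean curvature is $f = f(s)$, a function of $s$ alone, and the shape operator $A$ is diagonal in this frame. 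In particular $\operatorname{grad} f = (X f)\, X$ is tangent and $|A|^2$, $\operatorname{Ric}^N(\eta,\eta)$ are expressible through $f$, $\ell$, $m$ and $s$.

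Next I would write out the triharmonic equation. Since triharmonic corresponds to $r=3$, i.e.\ $s=1$ in \eqref{2s+1-tension}, the $r$-tension field simplifies considerably:
\begin{equation*}
\tau_3(\varphi) = \overline{\Delta}^2 \tau(\varphi) - R^N\!\left(\overline{\Delta}\,\tau(\varphi), d\varphi(e_i)\right) d\varphi(e_i) - R^N\!\left(\nabla^\varphi_{e_i}\tau(\varphi), \tau(\varphi)\right) d\varphi(e_i)\,,
\end{equation*}
with no intermediate sum term. For an isometric immersion $\tau(\varphi) = m\,\mathbf{H} = 2 f \eta$ (here $m=2$), and one decomposes $\tau_3$ into its tangential and normal parts relative to $\Sigma$. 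Using the Gauss and Weingarten formulas together with the explicit curvature tensor of the BCV-space (which is determined by $\ell$, $m$ through the $E_i$-frame), each iterated rough Laplacian $\overline{\Delta}^k(f\eta)$ becomes a differential expression in $f(s)$ and its $s$-derivatives, with coefficients polynomial in $f$, $\ell$, $m$. The triharmonic system thus becomes a coupled pair: a fourth-order ODE for $f$ coming from the normal component, and a lower-order ODE coming from the tangential component, the latter playing the role analogous to the second equation in \eqref{BHEq}.

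The key step — and the main obstacle — is to exploit the tangential equation to force $f' \equiv 0$. Heuristically, in the CMC biharmonic case the tangential equation reads $2A(\operatorname{grad} f) + m f \operatorname{grad} f - 2f(\operatorname{Ric}^N \eta)^\top = 0$; for a Hopf cylinder $(\operatorname{Ric}^N\eta)^\top$ turns out to be proportional to $X$ (through $\ell$) and $A(\operatorname{grad} f) = \kappa_1 (Xf) X$, so the tangential equation is a single scalar relation. In the triharmonic case the tangential component of $\tau_3$ produces an analogous scalar ODE in which the highest-order term in $f'$ appears multiplied by a coefficient that does not vanish (barring the degenerate space forms already covered). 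I would argue that if $f$ were nonconstant on some interval, one could combine the tangential ODE with the normal ODE to derive a contradiction — either by a first-integral / conservation-law argument (multiplying by $f'$ and integrating, as is standard for these rotationally-symmetric ODE systems), or by a careful order-of-vanishing analysis at a point where $f' \neq 0$. A clean route is to show the tangential equation alone already implies, after using that $\kappa_1 - \kappa_2$ and the relevant Ricci components are affine in $f$, that $f'$ satisfies a homogeneous linear ODE whose only solution compatible with the normal equation is $f' \equiv 0$; alternatively one shows $(f')^2$ equals a polynomial in $f$ that is forced to be identically zero. I expect the bookkeeping of the curvature terms $R^N$ in the $E_i$-frame and the iterated $\overline{\Delta}$ to be the most delicate and error-prone part, and the conceptual heart to be identifying the right combination of the tangential and normal equations that kills $f'$.
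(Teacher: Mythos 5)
Your overall strategy coincides with the paper's: compute the components of $\tau_3$ in the adapted frame $\{X,E_3,\eta\}$, reduce to a system of ODEs in the geodesic curvature $\kappa_g(s)$ of the base curve, and eliminate to force $\dot\kappa_g\equiv 0$. But there are two concrete problems. First, your description of the second fundamental form is wrong when $\ell\neq 0$: from Lemma\link\ref{lemma-nablaXX} one has $\overline{\nabla}_X\eta=\kappa_g X-\frac{\ell}{2}E_3$ and $\overline{\nabla}_{E_3}\eta=-\frac{\ell}{2}X$, so the shape operator has an off-diagonal entry $\ell/2$ and $X$, $E_3$ are \emph{not} principal directions. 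Consequently the tangential part of $\tau_3$ is not ``a single scalar relation'': it has independent components along $X$ and along $E_3$ (the equations $K1=0$ and $K2=0$ of \eqref{harmonicity3-hopfcilingro}). This is not cosmetic, because the $E_3$-equation --- which your diagonal-shape-operator picture would suppress --- is exactly what makes the case $\ell\neq 0$ close: the paper forms the combination $4\ell\, K1+5\kappa_g K2=0$ to express $\ddot\kappa_g$ as a polynomial in $\kappa_g$ alone, then differentiates and resubstitutes into $K2$ to obtain $-\tfrac{7}{10}\ell(\ell^2+15\kappa_g^2)\dot\kappa_g=0$, whence $\dot\kappa_g=0$.

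Second, the decisive step is asserted rather than established. That ``one could combine the tangential ODE with the normal ODE to derive a contradiction'' by a first-integral or order-of-vanishing argument is a plausible hope, but whether the elimination actually terminates in a constant-coefficient polynomial relation for $\kappa_g$ depends on the specific numerical coefficients produced by the curvature tensor \eqref{tensore-curvatura-general-expression}, and verifying this is the entire content of the theorem. In the case $\ell=0$, where the $E_3$-equation vanishes identically and the fourth-order normal equation must be used, the paper integrates the $X$-equation once to get $2\kappa_g\ddot\kappa_g=\kappa_g^4-\dot\kappa_g^2+c$, substitutes into the normal equation to obtain a polynomial relation between $\kappa_g$ and $\dot\kappa_g^2$, solves for $\dot\kappa_g^2$, differentiates once more and resubstitutes to land on a nonzero polynomial in $\kappa_g$ with constant coefficients; the degenerate subcase $m=c=0$ must be handled separately via $7(\kappa_g^2+\dot\kappa_g^2)^2=0$. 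None of this is automatic, and without carrying it out the argument is incomplete at precisely the point you yourself identify as ``the conceptual heart''.
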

The analysis in the proof of Theorem\link\ref{Th-HopfcilindroCMC} shows that, if $\Sigma$ is a Hopf cylinder, then its tension field is $\tau=-\kappa_g \eta$, where $\kappa_g$ denotes the geodesic curvature of its base curve.\\

Our second result is:
\begin{theorem}\label{Th-HopfcilindroCMC-espiliciti} Let $\Sigma$ be a CMC Hopf cylinder in a BCV-space $M^3_{m,\ell}$. 
\begin{itemize}
\item[{\rm (i)}]If $4m \leq \ell^2$ and $\Sigma$ is triharmonic, then $\Sigma$ is minimal.
\item[{\rm (ii)}]If $4m > \ell^2$ and the geodesic curvature $\kappa_g$ of its base curve verifies
\begin{equation}\label{kappag-CMCcilidro3armonico}
\kappa_g^2=2(4m-\ell^2) \,,
\end{equation}
then $\Sigma$ is proper triharmonic.
\end{itemize}
\end{theorem}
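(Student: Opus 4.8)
The plan is to reduce the $r$-harmonicity of a CMC Hopf cylinder to a scalar condition on the geodesic curvature $\kappa_g$ of its base curve, using the explicit orthonormal frame $\{X,E_3,\eta\}$ adapted to $\Sigma$. First I would compute all the relevant geometric data of $\Sigma$: from the remark following Theorem \ref{Th-HopfcilindroCMC} we already know $\tau(\varphi)=-\kappa_g\eta$, so for a CMC cylinder $\kappa_g$ is constant (this follows from Theorem \ref{Th-HopfcilindroCMC-espiliciti}'s hypothesis together with Theorem \ref{Th-HopfcilindroCMC}, which forces CMC) — actually more precisely, since $\mathbf{H}=f\eta$ and for a Hopf cylinder one checks $\tau=-2f\eta$ up to the normalization, so $\kappa_g=2\alpha$ is constant. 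I would then work out the shape operator $A$ of $\Sigma$ with respect to $\eta$ in the frame $\{X,E_3\}$; this uses the Levi-Civita connection of $M^3_{m,\ell}$ in the frame $\{E_1,E_2,E_3\}$, whose structure constants involve $m$ and $\ell$. One finds that $A$ has a fixed form depending on $\kappa_g$ and $\ell$, so that $|A|^2$ is constant. Likewise I would record $\operatorname{Ric}^N(\eta,\eta)$ and the tangential part $(\operatorname{Ric}^N(\eta))^\top$ for BCV-spaces, which are classical.

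Next, since $\tau(\varphi)=-\kappa_g\eta$ with $\kappa_g$ constant, the iterated rough Laplacians $\overline{\Delta}^j\tau(\varphi)$ are computable: $\overline{\Delta}\tau$ reduces to a curvature-and-second-fundamental-form expression because the ``horizontal'' derivatives of the constant-length normal field are controlled by $A$ and by $\nabla^N$. I expect, as is typical in these CMC situations (compare the sphere case, Theorem \ref{Th-existence-hypersurfaces-c>0}), that $\overline{\Delta}\tau(\varphi)=c\,\eta$ for a constant $c$ expressible through $|A|^2$ and $\operatorname{Ric}^N(\eta,\eta)$, and then inductively $\overline{\Delta}^j\tau(\varphi)=c^{j}\,(-\kappa_g)\,\eta$ — i.e. each rough Laplacian just multiplies by the scalar $c$. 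Once this is established, the curvature terms $R^N(\cdot,d\varphi(e_i))d\varphi(e_i)$ appearing in \eqref{2s-tension}–\eqref{2s+1-tension} all act on vectors proportional to $\eta$, producing again multiples of $\eta$ plus tangential pieces that must vanish; the tangential condition will be automatically satisfied in the BCV-setting when $\kappa_g$ is constant (analogous to the second line of \eqref{hn45}). Then $\tau_r(\varphi)=0$ collapses to a single polynomial equation in $\kappa_g^2$, $m$, $\ell$.

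For the specific case $r=3$ (triharmonic), I would carry out the above with $s=1$ in \eqref{2s+1-tension}: $\tau_3(\varphi)=\overline{\Delta}^2\tau(\varphi)-R^N(\overline{\Delta}\tau(\varphi),d\varphi(e_i))d\varphi(e_i)-R^N(\nabla^\varphi_{e_i}\tau(\varphi),\tau(\varphi))d\varphi(e_i)$. Substituting $\tau=-\kappa_g\eta$ and the data above, the normal component yields an equation of the form $P(\kappa_g^2,m,\ell)=0$; I anticipate it factors as $\kappa_g^2\bigl(\kappa_g^2-2(4m-\ell^2)\bigr)=0$ after using the relation $|A|^2=\tfrac12\kappa_g^2+\tfrac14\ell^2$ or similar and $\operatorname{Ric}^N(\eta,\eta)=\tfrac{\ell^2}{2}$ appropriate to BCV-spaces. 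The root $\kappa_g=0$ is the minimal (hence harmonic, non-proper) case, and $\kappa_g^2=2(4m-\ell^2)$ gives the proper triharmonic cylinders. This immediately settles (i): if $4m\le\ell^2$ then $2(4m-\ell^2)\le 0$, so the only real solution is $\kappa_g=0$, i.e. $\Sigma$ is minimal. And (ii) is exactly the statement that $\kappa_g^2=2(4m-\ell^2)>0$ (which requires $4m>\ell^2$) produces a solution, which is then proper since $\kappa_g\ne 0$.

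The main obstacle I expect is the bookkeeping in computing $\overline{\Delta}^2\tau(\varphi)$ and the two curvature terms in $\tau_3$ explicitly in the BCV-frame: one must carefully track the connection coefficients of $M^3_{m,\ell}$ (which mix the three $E_i$ via $\ell$), verify that the tangential components indeed cancel, and correctly assemble the normal component into a single polynomial — a small sign or coefficient error propagates into the final curvature condition. A secondary subtlety is confirming that the relation between $\kappa_g$ and the mean curvature $\alpha$ (a factor of $2$, from the two tangent directions $X$ and $E_3$ of which only $X$ contributes curvature) is used consistently, so that the condition is stated in terms of $\kappa_g$ as in \eqref{kappag-CMCcilidro3armonico} rather than $\alpha$. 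Everything else — constancy of $|A|^2$, the inductive form of $\overline{\Delta}^j\tau$, and the reduction to a polynomial — is routine once the frame computations are in place.
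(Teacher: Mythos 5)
Your plan is essentially the paper's proof: the authors compute the $3$-tension field of a Hopf cylinder explicitly in the adapted frame $\{X,E_3,\eta\}$ (Lemmata\link\ref{lemma-nablaXX}--\ref{lemma-R1-R2}, assembled in Proposition\link\ref{prop-3tension-hopfcilidro}), and for constant $\kappa_g$ the tangential components vanish and the normal component reduces to $-\tfrac14\kappa_g(\ell^2+4\kappa_g^2)(2\ell^2-8m+\kappa_g^2)=0$, giving exactly the dichotomy $\kappa_g=0$ or $\kappa_g^2=2(4m-\ell^2)$ that you predict. Your hedged numerical guesses are off ($|A|^2=\kappa_g^2+\ell^2/2$, not $\tfrac12\kappa_g^2+\tfrac14\ell^2$, and $\overline{\Delta}\tau=-\kappa_g|A|^2\eta$ with no Ricci contribution since $\nu=0$ on a Hopf cylinder), but these do not affect the structure of the argument or the roots of the final polynomial.
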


\begin{remark}
The base curve of a proper triharmonic Hopf cylinder in $SU(2)$ as in Theorem~\ref{Th-HopfcilindroCMC-espiliciti} (ii) is a circle of radius $R=1/\sqrt{12m-2\ell^2}$ in $\s^2(1/(2\sqrt{m}))$.
\end{remark}
The analysis of the Hopf cylinders fits naturally into the context of the study of isoparametric surfaces. We recall that, in a general Riemannian manifold, a hypersurface is said to be\textit{ isoparametric} if itself and its locally defined nearby equidistant hypersurfaces have constant mean curvature. In
the $30$'s, Cartan characterized isoparametric hypersurfaces in space forms as those with constant principal curvatures and achieved their classification in hyperbolic
spaces $\Hy^n$. Segre obtained a similar result for Euclidean spaces $\R^n$. In both cases, isoparametric hypersurfaces are also open parts of extrinsically homogeneous hypersurfaces, that is, codimension one orbits of isometric actions on the ambient
space. By contrast, the classification problem in spheres $\s^n$ is much more complicated and rich, and there are inhomogeneous examples (see \cite{MR4251144} and references therein, for instance). 

In spaces of nonconstant curvature, very few classification results are known. In the case of interest for us we have the following important result:
\begin{theorem}\label{Th-isop-BCV-spaces} \cite{MR4288655}
Let $\Sigma$ be an immersed surface in $M^3_{m,\ell}$, $4m-\ell^2 \neq 0$. Then the following assertions are equivalent:
\begin{itemize}
\item[{\rm (i)}] $\Sigma$ is an open subset of a homogeneous surface.
\item[{\rm (ii)}] $\Sigma$ is isoparametric.
\item[{\rm (iii)}] $\Sigma$ has constant principal curvatures.
\item[{\rm (iv)}] $\Sigma$ is an open subset of one of the following complete surfaces:

{\rm (a)} a Hopf cylinder over a complete curve of constant curvature in $M^2(4m)$;

{\rm (b)} a horizontal slice $M^2(4m) \times t_0$ with $\ell=0$;

{\rm (c)} a parabolic helicoid $P_{\alpha,m,\ell}$ with $\alpha^2+m<0$.
\end{itemize}
\end{theorem}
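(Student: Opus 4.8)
The plan is to prove the chain of implications (i) $\Rightarrow$ (ii) $\Rightarrow$ (iii) $\Rightarrow$ (iv) $\Rightarrow$ (i), working throughout with the adapted frame $\{X, E_3, \eta\}$ of \eqref{frame-field-adapted-Sigma} and with the \emph{angle function} $\nu = \langle \eta, E_3\rangle$, which measures the failure of the fibre direction to be tangent to $\Sigma$. The two structural facts I would use repeatedly are that $E_3$ is a unit Killing field satisfying $\nabla_Z E_3 = \tfrac{\ell}{2}\, Z \times E_3$ for every $Z$, and that the curvature tensor of $M^3_{m,\ell}$ is completely determined by the constants $4m, \ell$ and by $E_3$; in particular the Ricci operator has eigenvalue $\ell^2/2$ along $E_3$ and $4m - \ell^2/2$ on the horizontal distribution, so that $\mathrm{Ric}^N(\eta,\eta) = (4m - \tfrac{\ell^2}{2}) - (4m - \ell^2)\nu^2$ is an explicit function of $\nu$.

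For (i) $\Rightarrow$ (ii): if $\Sigma = G\cdot p$ is an orbit of a subgroup $G \subset \mathrm{Isom}(M^3_{m,\ell})$, then $G$ preserves $\Sigma$ and its unit normal, hence maps each equidistant surface $\Sigma_t$ (the image of $\Sigma$ under the normal exponential map at distance $t$) onto itself and acts transitively on it; since the mean curvature is an isometry invariant, each $\Sigma_t$ is CMC, which is exactly the isoparametric condition. The reverse closing implication (iv) $\Rightarrow$ (i) I would dispatch by exhibiting, for each of the three model families, a two-parameter subgroup of isometries acting transitively on it: vertical translations together with the lift of the one-parameter group fixing the constant-curvature base curve for a Hopf cylinder (a); the full transitive isometry group of the base factor for a slice when $\ell = 0$ (b); and the group generated by the parabolic isometries of the base together with the vertical screw motions for the parabolic helicoid (c).

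For (ii) $\Rightarrow$ (iii): I would analyse the normal flow $t \mapsto \Sigma_t$. The shape operators $A_t$ satisfy the Riccati equation $A_t' = A_t^2 + R_t$, where $R_t = R^N(\cdot,\eta_t)\eta_t$ is the normal Jacobi operator; taking traces gives $H'(t) = |A_t|^2 + \mathrm{Ric}^N(\eta_t,\eta_t)$. The isoparametric hypothesis forces $H(t)$ to be constant on each leaf, hence so is the right-hand side; combined with the explicit expression above, this shows that on each $\Sigma_t$ the quantity $|A_t|^2$ is a function of $\nu$ alone. Differentiating $\nu = \langle \eta, E_3\rangle$ with the Killing identity expresses $\mathrm{grad}\,\nu$ in terms of $A$ and $E_3^{\top} = E_3 - \nu\eta$; feeding this back, together with the Gauss equation relating the extrinsic product $\lambda_1\lambda_2$ to the intrinsic curvature and $\nu$, I expect to close a system forcing both principal curvatures $\lambda_1,\lambda_2$ to be constant.

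The main obstacle is (iii) $\Rightarrow$ (iv), the actual classification. Here the engine is the Codazzi equation, which in $M^3_{m,\ell}$ reads $(\nabla_X A)Y - (\nabla_Y A)X = (4m - \ell^2)\,\nu\,\big(\langle Y, E_3^{\top}\rangle X - \langle X, E_3^{\top}\rangle Y\big)$; it is precisely the hypothesis $4m - \ell^2 \neq 0$ that keeps this right-hand side active and rigidifies the geometry. Assuming $\lambda_1, \lambda_2$ constant, I would split into cases according to the value of $\nu$. When $\nu \equiv 0$ the normal is horizontal, $E_3$ is tangent, and $\Sigma$ is a Hopf cylinder whose base curve has constant geodesic curvature, giving (a). When $|\nu| \equiv 1$ the Codazzi term forces $\ell = 0$ and $\Sigma$ is a horizontal slice, giving (b). The delicate case is when $\nu$ is non-constant: using the gradient formula for $\nu$ and the Codazzi relation I would show that the principal directions and $\nu$ are locally rigid, that the base curvature parameter must satisfy $\alpha^2 + m < 0$, and that $\Sigma$ is congruent to a parabolic helicoid $P_{\alpha,m,\ell}$, giving (c). Disentangling this last case, and verifying that no further configurations survive the Codazzi constraints, is where the real work lies.
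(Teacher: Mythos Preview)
The paper does not prove this theorem: it is quoted verbatim from Dom\'{\i}nguez-V\'{a}zquez and Manzano \cite{MR4288655} (note the citation immediately after the theorem label) and is used only as a black box in the proof of Theorem~\ref{Th-triharm-isop-BCV-spaces}. So there is no ``paper's own proof'' to compare your proposal against.

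That said, your outline is a reasonable sketch of how such a classification is typically obtained, and the chain (i) $\Rightarrow$ (ii) and (iv) $\Rightarrow$ (i) are essentially routine. Two places deserve caution. First, in (ii) $\Rightarrow$ (iii) the Riccati argument only tells you that $|A_t|^2 + \mathrm{Ric}^N(\eta_t,\eta_t)$ is constant along each leaf; turning this into constancy of the \emph{individual} principal curvatures requires more than just the Gauss equation and the gradient formula for $\nu$, and your phrase ``I expect to close a system'' hides the genuine analytic work. Second, in (iii) $\Rightarrow$ (iv) your trichotomy is incomplete: you treat $\nu\equiv 0$, $|\nu|\equiv 1$, and $\nu$ non-constant, but you do not rule out the possibility that $\nu$ is a constant strictly between $0$ and $1$ in absolute value. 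One must show, via the Codazzi equation and the formula for $\grad\nu$, that this intermediate constant case is impossible when $4m-\ell^2\neq 0$. Similarly, the assertion that the non-constant-$\nu$ case forces $\alpha^2+m<0$ and yields exactly a parabolic helicoid is the heart of the classification and would need a careful integration of the structure equations, not just an appeal to rigidity.
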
 
\begin{remark} We point out that in references \cite{manzano-torralbo,Daniel-Comm-Math-Helv-2007,MR4288655} the authors use parameters $\kappa, \tau$ instead of $m,\ell$. The relationship between these parameters is $\kappa=4 m,\, \tau=\ell/2$. Also, in these papers our constant $\alpha$ is denoted by $H$. 
\end{remark}

The parabolic helicoids $P_{\alpha,m,\ell}$ will be described explicitly in Section\link\ref{sec-proofs}.

Our main result in the context of isoparametric surfaces is the following:
\begin{theorem}\label{Th-triharm-isop-BCV-spaces}
Let $\Sigma$ be an isoparametric immersed surface in $M^3_{m,\ell}$, $4m-\ell^2 \neq 0$. If $\Sigma$ is proper triharmonic, then it is an open part of a Hopf cylinder as in {\rm (ii)} of Theorem\link\ref{Th-HopfcilindroCMC-espiliciti}. 
\end{theorem}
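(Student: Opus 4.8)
\textbf{Proof proposal for Theorem \ref{Th-triharm-isop-BCV-spaces}.}

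The plan is to combine the isoparametric classification of Theorem \ref{Th-isop-BCV-spaces} with the triharmonicity analysis of Theorems \ref{Th-HopfcilindroCMC} and \ref{Th-HopfcilindroCMC-espiliciti}, treating each of the three families (a), (b), (c) separately. Since $\Sigma$ is isoparametric with $4m-\ell^2\neq 0$, Theorem \ref{Th-isop-BCV-spaces} tells us $\Sigma$ is an open part of a Hopf cylinder over a constant-curvature curve, a horizontal slice $M^2(4m)\times t_0$ (only if $\ell=0$), or a parabolic helicoid $P_{\alpha,m,\ell}$ with $\alpha^2+m<0$. For the Hopf cylinder case: a Hopf cylinder over a curve of constant geodesic curvature is automatically CMC (this follows from the shape-operator computation in the adapted frame $\{X,E_3,\eta\}$, where the principal curvatures are expressed in terms of $\kappa_g$ and $\ell$), so Theorem \ref{Th-HopfcilindroCMC-espiliciti} applies directly. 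If $4m\le\ell^2$ then part (i) forces $\Sigma$ minimal, contradicting properness; if $4m>\ell^2$ then part (ii) says proper triharmonicity is equivalent to $\kappa_g^2=2(4m-\ell^2)$, which is precisely the surface in (ii) of Theorem \ref{Th-HopfcilindroCMC-espiliciti}. So in the Hopf cylinder case we land exactly where the theorem claims.

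The remaining two families must be ruled out. First I would dispose of the horizontal slice $M^2(4m)\times t_0$ with $\ell=0$: here $M^3_{0,\ell}$ with $\ell=0$ is a product $M^2(4m)\times\R$, the slice is totally geodesic, hence minimal, hence not proper. (One should also note this case only occurs when $m\neq 0$ given the standing hypothesis $4m-\ell^2\neq 0$, which is consistent; a totally geodesic slice is minimal regardless.) So this family contributes nothing.

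The main obstacle is the parabolic helicoid $P_{\alpha,m,\ell}$ with $\alpha^2+m<0$ — in particular this requires $m<0$, so the ambient is $\widetilde{SL}(2,\R)$ or a product over $\h^2(4m)$. I would need the explicit description of $P_{\alpha,m,\ell}$ promised for Section \ref{sec-proofs}, compute its shape operator $A$ and the quantities $|A|^2$, the mean curvature $\alpha$ (constant, by isoparametricity), and then feed the data into the triharmonic hypersurface equations obtained by specializing \eqref{2s+1-tension} with $s=1$ to a CMC hypersurface of a BCV-space. Since the principal curvatures are constant, all covariant-derivative-of-curvature terms simplify drastically and the triharmonic system should reduce, as in the sphere case \eqref{r-harmonicity-condition-in-spheres} with $r=3$, to an algebraic relation among $|A|^2$, $\alpha$, and the ambient curvature invariants $m,\ell$ (plus possibly one condition coming from the tangential part involving $\mathrm{Ric}^N(\eta)^\top$, which for a parabolic helicoid need not vanish a priori and may already be the contradiction). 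The expectation is that this algebraic system has no solution compatible with $\alpha^2+m<0$ and $\Sigma$ non-minimal — either because $\mathrm{Ric}^N(\eta)$ fails to be proportional to $\eta$ along $P_{\alpha,m,\ell}$, or because the resulting scalar equation is sign-definite and nonzero. Verifying this incompatibility is the one genuinely computational step; everything else is bookkeeping against results already proved.
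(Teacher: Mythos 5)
Your skeleton is exactly the paper's: invoke Theorem~\ref{Th-isop-BCV-spaces} to reduce to the three families, settle the Hopf cylinders via Theorems~\ref{Th-HopfcilindroCMC} and \ref{Th-HopfcilindroCMC-espiliciti}, dismiss the horizontal slices as totally geodesic, and exclude the parabolic helicoids. The first two families are handled correctly and completely. The problem is that the third family --- which is the only part of the theorem with real content beyond bookkeeping --- is not actually excluded: you state the \emph{expectation} that the resulting algebraic system has no solution compatible with $\alpha^2+m<0$, but you do not produce the system or verify the incompatibility. As written, the proposal does not prove the theorem; a priori a proper triharmonic parabolic helicoid could exist and the statement would be false.

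Two remarks on why this step cannot be waved through. First, your suggested shortcut --- that $(\mathrm{Ric}^N(\eta))^\top\neq 0$ might already give the contradiction --- works for the \emph{biharmonic} tangential equation \eqref{hn45} but not for the triharmonic one: the tangential part of $\tau_3$ for a CMC surface is not $\alpha(\mathrm{Ric}^N(\eta))^\top$ but a combination of terms along $E_3^\top$, $A(E_3^\top)$, $J(A(E_3^\top))$ and $A(J(E_3^\top))$ coming from $\overline{\Delta}{}^2\tau$ and both curvature terms (this is the content of Proposition~\ref{Prop-tau3-general-expression}, which requires computing $\overline{\Delta}(\nu E_3^\top)$). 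Second, the actual verification in the paper is not a one-line sign argument: one passes to the half-space model of $M^3_{m,\ell}$ (since $m<0$), parametrizes $P_{\alpha,m,\ell}$ as $(u,v,a\log v)$, computes $A(E_3^\top)$, $A^2(E_3^\top)$, $|A|^2=\tfrac12(8\alpha^2+\ell^2)$, etc., and reduces $\tau_3=0$ to a system of three polynomial equations $T1=T2=T3=0$ in $\alpha,m,\ell$; showing this system forces either $\ell^2<0$ or $m+\alpha^2>0$ requires eliminating $\ell^2$ and a short case analysis. Until that computation (or an equivalent one) is carried out, the proof has a genuine gap.
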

\begin{remark} If $4m-\ell^2 = 0$, then $M^3_{m,\ell}$ is a space form with nonnegative sectional curvature and in this case the only proper triharmonic isoparametric surface is $\s^2(\frac{1}{\sqrt{3m}})$ in $\s^3(\frac{1}{\sqrt{m}})$ (see \cite{MR3403738}).
 \end{remark}
\section{Preliminaries}\label{sec-preliminaries}

In order to prepare the ground for our proofs we need to carry out some preliminary work. Generally, the use of a bar over a symbol indicates that we refer to an object of the ambient space. We adopt the following notation and sign convention for the Riemannian
curvature tensor field:
\begin{equation}\notag
 \overline{R}(X,Y)Z=\overline{\nabla}_{X}\overline{\nabla}_{Y}Z
-\overline{\nabla}_{Y}\overline{\nabla}_{X}Z-\overline{\nabla}_{[X,Y]}Z\,.
\end{equation}
Moreover,
\begin{equation}\notag
\begin{array}{lll}
&&  \overline{R}(X,Y,Z,W)=\langle \overline{R}(X,Y)W,Z \rangle ,\\
&&\\
\notag && {\rm \overline{Ric}}(X,Y)=\sum_{i=1}^3 \overline{R}(X, e_i, Y, e_i)=\sum_{i=1}^3 \langle \overline{R}(
X,e_i) e_i, Y\rangle.
\end{array}
\end{equation}

A straightforward computation shows that
\begin{equation}\label{Lie}
[E_1,E_2]=-[E_2,E_1]=2mxE_{2}-2myE_{1}+\ell E_{3}\,,\;\; {\rm all\;\;
others}\;\;[E_i,E_j]=0,\;\;i,j=1, 2, 3.
\end{equation}
Then, using the Koszul formula
\begin{eqnarray}\label{Koszul}
2\langle Z,\overline{\nabla}_Y X \rangle&=&X\langle Y,Z \rangle+Y\langle Z,X \rangle-Z\langle X,Y \rangle \\\nonumber
&&-\langle [X,Z],Y \rangle-\langle [Y,Z],X \rangle-\langle [X,Y],Z \rangle \,,
\end{eqnarray}
it is easy to compute:
\begin{equation}\label{CNil}
\begin{array}{ll}
\overline{\nabla}_{E_{1}}E_{1}=2myE_{2}\,,& \overline{\nabla}_{E_{2}}E_{2}=2mxE_{1}\,,\vspace{2mm} \\
\overline{\nabla}_{E_{1}}E_{2}=-2myE_{1}+\dfrac{\ell}{2}E_{3}\,,&
\overline{\nabla}_{E_{2}}E_{1}=-2mxE_{2}-\dfrac{\ell}{2}E_{3}\,,\vspace{2mm} \\
\overline{\nabla}_{E_{3}}E_{1}=\overline{\nabla}_{E_{1}}E_{3}=-\dfrac{\ell}{2}E_{2}\,,& \overline{\nabla}_{E_{3}}E_{2}=\overline{\nabla}_{E_{2}}E_{3}=\dfrac{\ell}{2}E_{1}\,,\vspace{2mm} \\
{\rm all \;\; others\;\;} \overline{\nabla}_{E_i}E_j=0,\;i,j=1, 2, 3.&
\end{array}
\end{equation}
Similarly, a further computation gives
the possible nonzero values of the sectional curvatures:
\begin{equation}\label{BCV1}
\begin{split}
 \overline{R}_{1212}&=\langle \overline{R}(E_{1},E_{2})E_{2},E_{1} \rangle=4m-\frac{3\ell ^2}{4},\\
\overline{R}_{1313}&=\langle \overline{R}(E_{1},E_{3})E_{3},E_{1}\rangle =\frac{\ell ^2}{4},\\
\overline{R}_{2323}&=\langle \overline{R}(E_{2},E_{3})E_{3},E_{2} \rangle=\frac{\ell ^2}{4}\,.
\end{split}
\end{equation}

The Riemannian curvature tensor field $\overline{R}$ of $M^3_{m,\ell}$ can be described as follows (see \cite{Daniel-Comm-Math-Helv-2007}, where the opposite sign convention for the curvature tensor is adopted).
\begin{eqnarray}\label{tensore-curvatura-general-expression}
 \overline{R}(X,Y)Z&=& \left ( 4m - \frac{3 \ell^2}{4}\right )\big( - \langle X,Z \rangle Y  + \langle Y,Z \rangle X \big )-(4 m - \ell^2)\nonumber \\
&& \big( \langle Y,E_3 \rangle \langle Z,E_3 \rangle X+
 \langle Y,Z \rangle \langle X,E_3 \rangle E_3-\langle X,E_3 \rangle \langle Z,E_3 \rangle Y-
 \langle X,Z \rangle \langle Y,E_3 \rangle E_3\big) \,.
\end{eqnarray}
Another useful formula is the following (see \cite{Daniel-Comm-Math-Helv-2007}):
\begin{equation}\label{nablaE3}
\overline{\nabla}_X \,E_3= \frac{\ell}{2} \, X \times E_3  \,,
\end{equation}
where $\times$ here has the following meaning:
\[
\langle X \times Y,Z \rangle =
\det{}_{(E_1,E_2,E_3)}(X,Y,Z) \, .
\]
We shall study oriented immersed surfaces $\varphi:M^2 \to M^3_{m,\ell}$ and denote by $\eta$ the unit normal vector field. 

The vector field $E_3^\top=E_3-\nu \, \eta$, where we have set
 \begin{equation}\label{nu-definition}
 \nu=\langle E_3,\eta \rangle \,,
 \end{equation}
will play a basic role in our analysis. This vector field plays an important part also in the previous literature on this subject. For our purposes, it is useful to recall (see Proposition 3.3 of \cite{Daniel-Comm-Math-Helv-2007}):
\begin{equation}\label{nablaE3T}
\overline{\nabla}_X \,E_3^\top=\nabla_X \,E_3^\top+B(X,E_3^\top)= \nu \, \left ( A(X)-\frac{\ell}{2} \,J( X)  \right ) + \langle A(X),E_3^\top \rangle \,\eta\,,
\end{equation}
where $J$ denotes the $\pi \slash 2$ rotation on $TM^2$. 

If $p$ is an arbitrarily fixed point of $M^2$, then, as $\nabla J=0$, we can consider a geodesic frame field $\{X_1,X_2\}$ such that, in a small neighbourhood of $p$, $J(X_1)=X_2$, $J(X_2)=-X_1$.

Moreover, taking into account the definition \eqref{nu-definition}, we compute:
\begin{equation}\label{nablaE3-eta}
X(\nu)= -\langle A(X)-\frac{\ell}{2} \,J( X)  , E_3 \rangle=-\langle  A(X)-\frac{\ell}{2} \,J( X)  , E_3^\top \rangle \,.
\end{equation}

\section{Proof of Theorems\link\ref{Th-HopfcilindroCMC} and \ref{Th-HopfcilindroCMC-espiliciti}}\label{sec-proofs}
Let $\{T,N\}$ denote the canonically oriented unit tangent and normal fields to $\gamma$ in the base space $M^2(4m)=(\bar{M},h)$, i.e.,
\[
T=(\dot{x},\dot{y}) \,, \quad N=(-\dot{y},\dot{x})\,,
\]
so that the geodesic curvature $\kappa_g$ of $\gamma$ is defined in the base space by means of
\[
\nabla_T T=\kappa_g N \,, \quad \nabla_T N=-\kappa_g T \,.
\]
Note that we denote $\overline{\nabla}=\nabla^g,\,\nabla=\nabla^\Sigma$. A computation shows that
\[
\kappa_g= \frac{2m}{F}(\dot{x}y-\dot{y}x) + \frac{\ddot{y}\dot{x}-\ddot{x}\dot{y}}{F^2} \,.
\]
Our first lemma is:
\begin{lemma}\label{lemma-nablaXX} Let $X,\eta$ be the vector fields defined in \eqref{frame-field-adapted-Sigma}. Then
\begin{equation}
\begin{array}{llllll}
{\rm (i)}& \overline{\nabla}_X X=-\kappa_g \eta\quad &{\rm (ii)}& \overline{\nabla}_X \eta=\kappa_g X -\dfrac{\ell}{2}E_3\quad&{\rm (iii)} &\overline{\nabla}_{E_3} X= \dfrac{\ell}{2} \eta\vspace{2mm} \\
{\rm (iv)}& \overline{\nabla}_{E_3} \eta=-\dfrac{\ell}{2}X\quad  &{\rm (v)}& \overline{\nabla}_{E_3} E_3=0&{\rm (vi)}& \overline{\nabla}_{X} E_3= \dfrac{\ell}{2} \eta
\end{array}
\end{equation}
\end{lemma}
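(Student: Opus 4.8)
The plan is to reduce all six identities to the structure equations \eqref{Lie}, \eqref{CNil} and to the formula \eqref{nablaE3}, writing throughout $X=aE_1+bE_2$ and $\eta=bE_1-aE_2$ with $a=\dot{x}/F$, $b=\dot{y}/F$, so that $a^2+b^2=1$ by the arc-length assumption. The first observation is that $a$ and $b$ depend on $s$ only and are independent of $z$; since $[E_3,E_1]=[E_3,E_2]=0$ by \eqref{Lie}, this gives $[E_3,X]=[E_3,\eta]=0$ and, more to the point, it kills the coefficient-derivative terms in the $E_3$-derivatives. Hence $\overline{\nabla}_{E_3}X=a\,\overline{\nabla}_{E_3}E_1+b\,\overline{\nabla}_{E_3}E_2$, and substituting $\overline{\nabla}_{E_3}E_1=-\tfrac{\ell}{2}E_2$, $\overline{\nabla}_{E_3}E_2=\tfrac{\ell}{2}E_1$ from \eqref{CNil} gives (iii); the identical computation for $\eta$ gives (iv); and (v) is simply $\overline{\nabla}_{E_3}E_3=0$, already recorded in \eqref{CNil} (the fibres of \eqref{RSM} are geodesics). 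For (vi) I would apply \eqref{nablaE3}, $\overline{\nabla}_X E_3=\tfrac{\ell}{2}\,X\times E_3$, and evaluate the triple product against $E_1,E_2,E_3$ to get $X\times E_3=bE_1-aE_2=\eta$ (equivalently, expand $\overline{\nabla}_X E_3=a\,\overline{\nabla}_{E_1}E_3+b\,\overline{\nabla}_{E_2}E_3$ by \eqref{CNil}).

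The computational core is (i). Here one expands $\overline{\nabla}_X X=X(a)E_1+X(b)E_2+a\,\overline{\nabla}_X E_1+b\,\overline{\nabla}_X E_2$, computes $\overline{\nabla}_X E_1$ and $\overline{\nabla}_X E_2$ from \eqref{CNil}, and uses that $X(a)=\dot{a}$ and $X(b)=\dot{b}$ (because $X$ differs from $\partial/\partial s$ by a multiple of $E_3=\partial/\partial z$, while $a,b$ do not depend on $z$). The $E_3$-components then cancel, and after differentiating $a=\dot{x}/F$, $b=\dot{y}/F$ the remaining $E_1$- and $E_2$-components should collapse --- once one substitutes the arc-length identity $\dot{x}^2+\dot{y}^2=F^2$ together with its derivative $\dot{x}\ddot{x}+\dot{y}\ddot{y}=2mF(x\dot{x}+y\dot{y})$ --- to exactly $-\kappa_g b$ and $\kappa_g a$, where $\kappa_g=\tfrac{2m}{F}(\dot{x}y-\dot{y}x)+\tfrac{\ddot{y}\dot{x}-\ddot{x}\dot{y}}{F^2}$; this yields $\overline{\nabla}_X X=-\kappa_g(bE_1-aE_2)=-\kappa_g\eta$. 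The same conclusion follows conceptually from O'Neill's formulas for the Riemannian submersion \eqref{RSM}: $X$ is the horizontal lift of the unit tangent $T$ of $\gamma$ and $\eta$ is minus the horizontal lift of the normal $N$; since the O'Neill tensor $A$ is skew-symmetric, $A_X X=0$, so $\overline{\nabla}_X X$ is the horizontal lift of $\nabla_T T=\kappa_g N$, that is, $-\kappa_g\eta$.

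Finally, (ii) requires no further computation once (i) and (vi) are available: the frame $\{X,E_3,\eta\}$ is orthonormal and $\langle\overline{\nabla}_X\eta,\eta\rangle=0$, so
\[
\overline{\nabla}_X\eta=\langle\overline{\nabla}_X\eta,X\rangle\,X+\langle\overline{\nabla}_X\eta,E_3\rangle\,E_3=-\langle\eta,\overline{\nabla}_X X\rangle\,X-\langle\eta,\overline{\nabla}_X E_3\rangle\,E_3=\kappa_g X-\tfrac{\ell}{2}E_3 .
\]
I expect the only genuine obstacle to be the bookkeeping in (i): one must carry the terms $\dot{a},\dot{b}$ along and reconcile them with the terms coming from \eqref{CNil}, and the clean appearance of $\kappa_g$ materialises only after the arc-length normalization and its differentiated form have been used. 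The submersion argument avoids this bookkeeping but requires the standard facts about the fibration \eqref{RSM} recalled in \secref{sec-preliminaries}.
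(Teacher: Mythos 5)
Your proposal is correct, and all six identities check out as you describe (in particular, the $E_1$- and $E_2$-coefficients in (i) do collapse to $-\kappa_g\dot y/F$ and $\kappa_g\dot x/F$ once $\dot x^2+\dot y^2=F^2$ and its derivative are substituted). However, your primary route is genuinely different from the paper's. The paper proves the whole lemma by the submersion argument you only mention in passing: it records the identity $d\pi\big(\overline{\nabla}_{V^{\hor}}W^{\hor}\big)=\nabla_{d\pi(V^{\hor})}d\pi(W^{\hor})$, deduces the horizontal part of each covariant derivative from the Frenet equations of $\gamma$ downstairs, and then pins down the vertical ($E_3$-)component by pairing with $E_3$ and invoking $\overline{\nabla}_X E_3=\tfrac{\ell}{2}X\times E_3$; item (ii) is done explicitly and the others are declared similar. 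Your main route instead expands everything in the global frame $\{E_1,E_2,E_3\}$ using the Christoffel data \eqref{CNil}, which is more elementary but forces exactly the bookkeeping you flag in (i) --- the step the paper's method bypasses, since there $\overline{\nabla}_XX$ has no vertical component ($A_XX=0$, in O'Neill's notation) and its horizontal part is the lift of $\nabla_TT=\kappa_gN$. Your treatment of (ii) by metric compatibility against the orthonormal frame $\{X,E_3,\eta\}$, deducing it from (i) and (vi), is a clean economy not present in the paper, and your identification of the vertical component via $\langle\overline{\nabla}_X\eta,E_3\rangle=-\langle\eta,\overline{\nabla}_XE_3\rangle$ coincides with the paper's computation of the constant $c=-\ell/2$. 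In short: both arguments are sound; the paper's is shorter and conceptual, yours is self-contained at the level of the structure equations but computation-heavier for (i).
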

\begin{proof}
We note that $X=T^{\hor}$, i.e., $X$ is the horizontal lift of $T$ and, similarly, $\eta=-N^{\hor}$. Because $\pi$ is a Riemannian submersion, we know that
\begin{equation}\label{Riem-submersion}
d\pi \left ( \overline{\nabla}_{V^\hor}W^\hor\right )= \nabla_{d\pi(V^\hor)}d\pi(W^\hor) \,.
\end{equation}
By way of example, we prove (ii): 
\[
d\pi \left( \overline{\nabla}_X \eta \right )=-\nabla_T N=\kappa_g T=d\pi (\kappa_g X)\,.
\]
It follows that
\[
\overline{\nabla}_X \eta=\kappa_g X + c E_3\,,
\]
where
\[
c= \langle \overline{\nabla}_X \eta, E_3 \rangle=-\langle  \eta, \overline{\nabla}_X E_3 \rangle \,.
\]
Next, using \eqref{nablaE3}, we deduce that
\[
c= - \frac{\ell}{2} \langle  \eta, X \times E_3 \rangle=- \frac{\ell}{2} \,.
\]
The other computations of this lemma are similar and so we omit them.
\end{proof}
It is easy to deduce from Lemma\link\ref{lemma-nablaXX} that the tension field of the Hopf cylinder is
\[
\tau = - \kappa_g \eta \,.
\]
In particular, the Hopf cylinder $\Sigma$ is CMC ($\alpha=-\kappa_g \slash 2$) if and only if its base curve $\gamma$ has constant geodesic curvature. 
Next, we need:
\begin{lemma}\label{lemma-Deltatau-cilindro}
\begin{equation}\label{Deltataucilindro}
\overline{\Delta} \tau =A X + B E_3 + C \eta \,,
\end{equation}
where
\[
A=3 \kappa_g \dot{\kappa}_g\,,\quad B=-\ell \dot{\kappa}_g\,,\quad C=
\ddot{\kappa}_g-\frac{\ell^2}{2} \kappa_g - \kappa_g^3 \,.
\]
\end{lemma}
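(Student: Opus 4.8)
The plan is to compute $\overline{\Delta}\tau$ directly from the definition of the rough Laplacian using the adapted frame $\{X, E_3\}$ on $\Sigma$, starting from the known expression $\tau = -\kappa_g\eta$. Since $\{X, E_3\}$ is a global orthonormal frame on the Hopf cylinder (by \eqref{frame-field-adapted-Sigma} and the discussion following it), the rough Laplacian \eqref{roughlaplacian} reads
\begin{equation}\notag
\overline{\Delta}\tau = -\Big( \nabla^\varphi_X\nabla^\varphi_X\tau - \nabla^\varphi_{\nabla^\Sigma_X X}\tau \Big) - \Big( \nabla^\varphi_{E_3}\nabla^\varphi_{E_3}\tau - \nabla^\varphi_{\nabla^\Sigma_{E_3} E_3}\tau \Big),
\end{equation}
where $\nabla^\varphi$ is the pull-back connection, which for an isometric immersion is just the ambient $\overline{\nabla}$ applied to vector fields along $\Sigma$. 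The first step is to record what $\nabla^\Sigma_X X$ and $\nabla^\Sigma_{E_3}E_3$ are: from Lemma~\ref{lemma-nablaXX}(i) we have $\overline{\nabla}_X X = -\kappa_g\eta$, which is purely normal, so $\nabla^\Sigma_X X = 0$; and $\overline{\nabla}_{E_3}E_3 = 0$ by (v), so $\nabla^\Sigma_{E_3}E_3 = 0$ as well. Hence the two connection-correction terms vanish and we are left with
\begin{equation}\notag
\overline{\Delta}\tau = -\overline{\nabla}_X\overline{\nabla}_X(-\kappa_g\eta) - \overline{\nabla}_{E_3}\overline{\nabla}_{E_3}(-\kappa_g\eta) = \overline{\nabla}_X\overline{\nabla}_X(\kappa_g\eta) + \overline{\nabla}_{E_3}\overline{\nabla}_{E_3}(\kappa_g\eta).
\end{equation}

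Next I would carry out the two iterated covariant derivatives. Since $\kappa_g$ is a function on the base curve, $X(\kappa_g) = \dot\kappa_g$ (with $X$ the horizontal lift of the arclength tangent $T$) and $E_3(\kappa_g) = 0$ because $\kappa_g$ is constant along the fibers. For the $E_3$-term, $\overline{\nabla}_{E_3}(\kappa_g\eta) = \kappa_g\overline{\nabla}_{E_3}\eta = -\frac{\ell}{2}\kappa_g X$ by Lemma~\ref{lemma-nablaXX}(iv); applying $\overline{\nabla}_{E_3}$ again and using (iii), $\overline{\nabla}_{E_3}(-\frac{\ell}{2}\kappa_g X) = -\frac{\ell}{2}\kappa_g\,\overline{\nabla}_{E_3}X = -\frac{\ell^2}{4}\kappa_g\eta$. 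For the $X$-term, $\overline{\nabla}_X(\kappa_g\eta) = \dot\kappa_g\,\eta + \kappa_g\overline{\nabla}_X\eta = \dot\kappa_g\,\eta + \kappa_g(\kappa_g X - \frac{\ell}{2}E_3)$ by (ii); then applying $\overline{\nabla}_X$ once more and expanding via Lemma~\ref{lemma-nablaXX} (parts (i), (ii), (vi)) and the product rule, one collects the coefficients of $X$, $E_3$, $\eta$. I expect this to produce, after gathering terms, a contribution $3\kappa_g\dot\kappa_g$ in the $X$-direction, $-\frac{\ell}{2}\big(\ell\kappa_g/?\big)$-type terms assembling to $-\ell\dot\kappa_g$ in the $E_3$-direction (the $-\frac{\ell}{2}\kappa_g E_3$ differentiates to give $-\frac{\ell}{2}\dot\kappa_g E_3$ plus a $-\frac{\ell}{2}\kappa_g\overline{\nabla}_X E_3 = -\frac{\ell^2}{4}\kappa_g\eta$ piece), and $\ddot\kappa_g - \kappa_g^3 - \frac{\ell^2}{4}\kappa_g$ in the normal direction from the $X$-part, which combines with the $-\frac{\ell^2}{4}\kappa_g\eta$ from the $E_3$-part to give the stated $C = \ddot\kappa_g - \frac{\ell^2}{2}\kappa_g - \kappa_g^3$.

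The whole argument is a bookkeeping exercise in the Koszul-derived formulas \eqref{CNil} repackaged as Lemma~\ref{lemma-nablaXX}, so there is no conceptual obstacle; the one place to be careful is the double sign flip coming from the convention $\overline{\Delta} = -\sum(\nabla\nabla - \nabla_{\nabla})$ combined with $\tau = -\kappa_g\eta$, together with tracking which of the six derivative formulas feeds into each of the three resulting components (in particular not forgetting that differentiating the $\kappa_g^2 X$ term via (i) contributes to the normal part, and that differentiating the $-\frac{\ell}{2}\kappa_g E_3$ term via (vi) also contributes to the normal part). I would organize the computation by first writing $\overline{\nabla}_X(\kappa_g\eta)$ in the $\{X, E_3, \eta\}$ basis, then differentiate each of its three summands separately, and finally add the single surviving $E_3E_3$ term; matching the result against $AX + BE_3 + C\eta$ then yields the claimed values of $A$, $B$, $C$.
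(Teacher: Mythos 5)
Your computation is correct and follows exactly the paper's own route: reduce the rough Laplacian to $\overline{\nabla}_X\overline{\nabla}_X(\kappa_g\eta)+\overline{\nabla}_{E_3}\overline{\nabla}_{E_3}(\kappa_g\eta)$ using $\nabla^\Sigma_XX=\nabla^\Sigma_{E_3}E_3=0$ and $E_3(\kappa_g)=0$, then expand via Lemma\link\ref{lemma-nablaXX}. The coefficients you collect ($3\kappa_g\dot\kappa_g$ for $X$, the two $-\tfrac{\ell}{2}\dot\kappa_g$ pieces for $E_3$, and $\ddot\kappa_g-\kappa_g^3-\tfrac{\ell^2}{4}\kappa_g-\tfrac{\ell^2}{4}\kappa_g$ for $\eta$) match the paper's term by term.
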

\begin{proof}
We compute:
\[
\overline{\Delta} \tau =- \Big \{\overline{\nabla}_X \overline{\nabla}_X \tau-\overline{\nabla}_{\nabla_X X}\tau+ 
\overline{\nabla}_{E_3} \overline{\nabla}_{E_3} \tau-\overline{\nabla}_{\nabla_{E_3} E_3}\tau\Big \}\,.
\]
Since $\tau$ is orthogonal to $\Sigma$, we deduce from Lemma\link\ref{lemma-nablaXX}(i) that $\nabla_X X=0$. Also, $\nabla_{E_3} E_3=0$ and so, using again Lemma\link\ref{lemma-nablaXX}, we compute:
\begin{eqnarray*}
\overline{\Delta} \tau &=&- \Big \{\overline{\nabla}_X \overline{\nabla}_X \tau+ 
\overline{\nabla}_{E_3} \overline{\nabla}_{E_3} \tau\Big \}\\
&=&\overline{\nabla}_X \big [ \dot{\kappa}_g \eta+ \kappa_g(\kappa_g X-\frac{\ell}{2}E_3)\big ]-\overline{\nabla}_{E_3}\big [\kappa_g \frac{\ell}{2} X \big ]\\
&=& \ddot{\kappa}_g \eta + \dot{\kappa}_g(\kappa_g X-\frac{\ell}{2}E_3)+2 \kappa_g \dot{\kappa}_g X - \kappa_g^3 \eta \\
&& - \frac{\ell}{2} \dot{\kappa}_g E_3 - \frac{\ell^2}{4}\kappa_g \eta -\frac{\ell^2}{4}\kappa_g \eta
\end{eqnarray*}
and the conclusion follows readily. 
\end{proof}
Now, we compute:
\begin{lemma}\label{lemma-Delta2tau-cilindro}
Let $A,B,C$ be the function defined in Lemma\link\ref{lemma-nablaXX}. Then
\begin{eqnarray}\label{Delta2taucilindro}
\overline{\Delta}^2 \tau &=&\Big [\frac{\ell^2}{4}A+A \kappa_g^2-\ddot{A}-\frac{\ell}{2}B \kappa_g-2\dot{C}\kappa_g-C\dot{\kappa}_g\Big ] X \\ \nonumber
&&+ \Big [-\frac{\ell}{2}A \kappa_g +\frac{\ell^2}{4}B -\ddot{B}+\ell \dot{C} \Big ] E_3 \\ \nonumber
&&+ \Big [2 \dot{A}\kappa_g+A \dot{\kappa}_g-\ell \dot{B} -\ddot{C}+C \kappa_g^2 +\frac{\ell^2}{2}C \Big ] \eta \,.
\end{eqnarray}
\end{lemma}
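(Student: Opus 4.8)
The plan is to obtain $\overline{\Delta}^2\tau$ by applying the rough Laplacian operator directly to the expression for $\overline{\Delta}\tau=AX+BE_3+C\eta$ from Lemma~\ref{lemma-Deltatau-cilindro}, exactly mirroring the computation already carried out in the proof of Lemma~\ref{lemma-Deltatau-cilindro}. That is, I would write
\[
\overline{\Delta}^2\tau=-\Big\{\overline{\nabla}_X\overline{\nabla}_X(\overline{\Delta}\tau)+\overline{\nabla}_{E_3}\overline{\nabla}_{E_3}(\overline{\Delta}\tau)\Big\}\,,
\]
using once more that for the Hopf cylinder $\nabla_XX=0$ and $\nabla_{E_3}E_3=0$, so the two connection terms $\overline{\nabla}_{\nabla_XX}(\cdot)$ and $\overline{\nabla}_{\nabla_{E_3}E_3}(\cdot)$ drop out. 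Here one must be slightly careful: these identities $\nabla_XX=0$, $\nabla_{E_3}E_3=0$ hold for the $\Sigma$-connection and are inherited because the relevant ambient covariant derivatives from Lemma~\ref{lemma-nablaXX} are purely normal; this is the same observation used in Lemma~\ref{lemma-Deltatau-cilindro}, so no new justification is needed.

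The key computational step is then to expand $\overline{\nabla}_X(AX+BE_3+C\eta)$ and $\overline{\nabla}_{E_3}(AX+BE_3+C\eta)$ using the Leibniz rule together with the six covariant-derivative formulas of Lemma~\ref{lemma-nablaXX}, and with the fact that along $\Sigma$ one has $X(A)=\dot A$, $X(B)=\dot B$, $X(C)=\dot C$ while $E_3(A)=E_3(B)=E_3(C)=0$ (the functions $A,B,C$ depend only on the arc-length parameter $s$ of the base curve, not on the fibre coordinate $t$). For instance $\overline{\nabla}_X(AX)=\dot A\,X+A\,\overline{\nabla}_XX=\dot A\,X-A\kappa_g\eta$, and similarly for the $E_3$ and $\eta$ terms; applying $\overline{\nabla}_X$ a second time and collecting the $X$-, $E_3$- and $\eta$-components gives one batch of terms, while $\overline{\nabla}_{E_3}\overline{\nabla}_{E_3}(AX+BE_3+C\eta)$ — in which $E_3$ kills all derivatives of the coefficients and only the $\ell/2$-rotations among $\{X,\eta\}$ (and $\overline{\nabla}_{E_3}E_3=0$) survive — produces a second batch, which at leading order contributes $-\frac{\ell^2}{4}$ times each coefficient with appropriate signs. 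Summing the two batches and multiplying by $-1$ should reproduce the three bracketed expressions in the statement.

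The main obstacle I expect is purely bookkeeping: keeping all the signs straight through a double application of $\overline{\nabla}_X$, since each of $\overline{\nabla}_XX$, $\overline{\nabla}_X\eta$, $\overline{\nabla}_XE_3$ feeds back mixed components (note in particular that $\overline{\nabla}_X\eta=\kappa_gX-\frac{\ell}{2}E_3$ couples the $X$- and $E_3$-slots, and $\overline{\nabla}_XX=-\kappa_g\eta$ couples into the $\eta$-slot), so cross-terms proliferate. A useful sanity check is that setting $\ell=0$ should collapse the formula to the flat-base expression $\overline{\Delta}^2\tau=(A\kappa_g^2-\ddot A-2\dot C\kappa_g-C\dot\kappa_g)X+(-\ddot B)E_3+(2\dot A\kappa_g+A\dot\kappa_g-\ddot C+C\kappa_g^2)\eta$, and another is that every term carrying an explicit factor $\ell$ must ultimately trace back to the $\frac{\ell}{2}E_3$ appearing in Lemma~\ref{lemma-nablaXX}(ii) and the $\pm\frac{\ell}{2}$ in (iii)--(iv),(vi). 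Once the components are collected and grouped by $X$, $E_3$, $\eta$, the conclusion follows by inspection, as in the previous lemma.
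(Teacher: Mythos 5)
Your proposal is correct and follows essentially the same route as the paper: the paper simply invokes linearity first, writing $\overline{\Delta}^2\tau=\overline{\Delta}(AX)+\overline{\Delta}(BE_3)+\overline{\Delta}(C\eta)$ and computing each piece "as in Lemma\link\ref{lemma-nablaXX}", which is exactly your double application of $\overline{\nabla}_X$ and $\overline{\nabla}_{E_3}$ with $\nabla_XX=\nabla_{E_3}E_3=0$ and $E_3(A)=E_3(B)=E_3(C)=0$. Only a minor imprecision: the $\overline{\nabla}_{E_3}\overline{\nabla}_{E_3}$ batch contributes $\tfrac{\ell^2}{4}A\,X+\tfrac{\ell^2}{4}C\,\eta$ and nothing to the $BE_3$ slot (the $\tfrac{\ell^2}{4}B$ in the $E_3$-component comes from the $\overline{\nabla}_X\overline{\nabla}_X$ batch), but this does not affect the validity of the argument.
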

\begin{proof}Obviously,
\[
\overline{\Delta}^2 \tau=\overline{\Delta}(AX)+\overline{\Delta}(BE_3)+\overline{\Delta}(C \eta)\,.
\]
Computing as in Lemma\link\ref{lemma-nablaXX} we find:
\begin{eqnarray*}
\overline{\Delta}(AX)&=&\Big [\frac{\ell^2}{4}A+A \kappa_g^2-\ddot{A}\Big]X- \Big [\frac{\ell}{2}A \kappa_g \Big ]E_3+\Big [2 \dot{A}\kappa_g+ A \dot{k}_g\Big] \eta \\
\overline{\Delta}(BE_3)&=&-\Big [\frac{\ell}{2}B \kappa_g \Big ] X+\Big [ \frac{\ell^2}{4}B-\ddot{B}\Big ]E_3 -\Big [\ell \dot{B} \Big ]\eta 
\\
\overline{\Delta}(C \eta)&=&-\Big [2\dot{C}\kappa_g +C \dot{\kappa}_g\Big]X+ \Big [\ell \dot{C} \Big ]E_3+\Big [\frac{\ell^2}{2}C+C \kappa_g^2 -\ddot{C}\Big] \eta \,.
\end{eqnarray*}
Adding up these three terms we obtain \eqref{Delta2taucilindro}.
\end{proof}
Now, using again Lemma\link\ref{lemma-nablaXX} and the explicit expression \eqref{tensore-curvatura-general-expression} of the Riemannian curvature tensor field, we can compute the two curvature terms of the $3$-tension field \eqref{2s+1-tension} and we find:
\begin{lemma}\label{lemma-R1-R2}
\begin{eqnarray*}
-\overline{R} \left(\overline{\Delta} \tau(\varphi), d \varphi (e_i)\right ) d \varphi (e_i)&=&-\frac{\ell^2}{4}A X-\frac{\ell^2}{4}B E_3+\left (-4m +\frac{\ell^2}{2} \right)C \eta \,;\\
-\,\overline{R} \Big( \nabla^\varphi_{e_i}\, \tau(\varphi),  \tau(\varphi)\Big ) d \varphi (e_i)&=&\left (4m -\frac{3\ell^2}{4} \right)\kappa_g^3 \eta\,. 
\end{eqnarray*}
\end{lemma}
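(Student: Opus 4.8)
The plan is to derive both identities by substituting directly into the explicit curvature formula \eqref{tensore-curvatura-general-expression}, using $\{X,E_3\}$ as the orthonormal frame $\{e_i\}$ tangent to $\Sigma$ that appears in Maeta's expression \eqref{2s+1-tension}. The one structural fact that makes the long formula \eqref{tensore-curvatura-general-expression} collapse is that, for a Hopf cylinder parametrised by $r(s,t)=(x(s),y(s),t)$, the vector field $E_3=\partial/\partial z$ is tangent to $\Sigma$; since $\{X,E_3,\eta\}$ is orthonormal by construction \eqref{frame-field-adapted-Sigma}, this forces $\langle X,E_3\rangle=0$ and $\langle\eta,E_3\rangle=0$, i.e.\ the function $\nu$ of \eqref{nu-definition} vanishes identically along $\Sigma$. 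Hence in \eqref{tensore-curvatura-general-expression} every monomial carrying a factor $\langle\,\cdot\,,E_3\rangle$ paired with $X$ or with $\eta$ disappears.

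For the first identity I would insert $\overline{\Delta}\tau=AX+BE_3+C\eta$ from Lemma \ref{lemma-Deltatau-cilindro} and compute $\overline{R}(\overline{\Delta}\tau,X)X+\overline{R}(\overline{\Delta}\tau,E_3)E_3$ by applying \eqref{tensore-curvatura-general-expression} once with $(X,Y,Z)=(\overline{\Delta}\tau,X,X)$ and once with $(X,Y,Z)=(\overline{\Delta}\tau,E_3,E_3)$. Using $|X|=|E_3|=1$ and $\nu=0$, the $\left(4m-\frac{3\ell^2}{4}\right)$-part and the $(4m-\ell^2)$-part recombine: the $X$- and $E_3$-components reduce to $\frac{\ell^2}{4}A$ and $\frac{\ell^2}{4}B$ respectively, while the $\eta$-component reduces to $\left(4m-\frac{\ell^2}{2}\right)C$. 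Taking the overall minus sign prescribed by \eqref{2s+1-tension} then gives precisely the first line of the lemma.

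For the second identity I would first note that $\kappa_g$ depends only on $s$, so $E_3(\kappa_g)=0$; then from $\tau=-\kappa_g\eta$ together with Lemma \ref{lemma-nablaXX}(ii),(iv),
\[
\nabla^{\varphi}_{X}\tau=-\dot{\kappa}_g\,\eta-\kappa_g^2\,X+\frac{\ell}{2}\kappa_g\,E_3,\qquad
\nabla^{\varphi}_{E_3}\tau=\frac{\ell}{2}\kappa_g\,X .
\]
Thus $\overline{R}(\nabla^{\varphi}_{e_i}\tau,\tau)d\varphi(e_i)=\overline{R}(\nabla^{\varphi}_{X}\tau,\tau)X+\overline{R}(\nabla^{\varphi}_{E_3}\tau,\tau)E_3$; the $E_3$-summand vanishes because its three arguments are proportional to $X$, $\eta$ and $E_3$, which are mutually orthogonal and with $X,\eta$ both orthogonal to $E_3$, so every monomial in \eqref{tensore-curvatura-general-expression} contains a vanishing inner product. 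In the $X$-summand the $(4m-\ell^2)$-part dies for the same reason, and only $\left(4m-\frac{3\ell^2}{4}\right)\!\left(-\langle\nabla^{\varphi}_{X}\tau,X\rangle\right)\tau=\left(4m-\frac{3\ell^2}{4}\right)\kappa_g^2\,\tau$ remains; substituting $\tau=-\kappa_g\eta$ and changing sign yields the second line. The only point requiring care is the bookkeeping — keeping the sign/argument convention of $\overline{R}$ in \eqref{tensore-curvatura-general-expression} aligned with that of $R^N$ in \eqref{2s+1-tension}, and tracking the six inner products correctly; there is no genuine analytic obstacle.
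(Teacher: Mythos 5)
Your computation is correct and is exactly the argument the paper intends: the paper proves this lemma only by invoking Lemma \ref{lemma-nablaXX} and the explicit curvature expression \eqref{tensore-curvatura-general-expression}, and your substitution of $\overline{\Delta}\tau=AX+BE_3+C\eta$ and $\nabla^{\varphi}_{X}\tau$, $\nabla^{\varphi}_{E_3}\tau$ into that formula, with all the $(4m-\ell^2)$-terms killed by $\langle X,E_3\rangle=\langle\eta,E_3\rangle=0$, reproduces both identities with the right signs. No gaps.
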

Finally, adding up the terms computed in Lemmata\link\ref{lemma-Delta2tau-cilindro}--\ref{lemma-R1-R2} and simplifying using the explicit expression of the functions $A,B,C$ defined in Lemma\link\ref{lemma-nablaXX}, we obtain the explicit expression of the $3$-tension field of a Hopf cylinder. This is summarized in the following 
\begin{proposition}\label{prop-3tension-hopfcilidro} As in \eqref{Hopf-cilindro}, let $\Sigma$ be a Hopf cylinder in a BCV-space $M^3_{m,\ell}$. Then its $3$-tension field is given by
\begin{eqnarray}\label{tension3-Hopfcilindro}\nonumber
\tau_3&=&\Big [2\dot{\kappa}_g(\ell^2 \kappa_g +5 \kappa_g^3-5\ddot{ \kappa}_g )-5\kappa_g \kappa_g^{(3)} \Big ]X\\
&&+\frac{\ell}{2} \Big [-(\ell^2+9  \kappa_g^2) \dot{\kappa}_g + 4  \kappa_g^{(3)}\Big ] E_3 \\\nonumber
&& +\frac{1}{4}\Big [\kappa_g \left ( 60 \dot{\kappa}_g^2-(\ell^2+4 \kappa_g^2)(2 \ell^2-8m+\kappa_g^2) \right )+2(5 \ell^2-8m+20 \kappa_g^2) \ddot{\kappa}_g -4 \kappa_g^{(4)}\Big ] \eta \,.\\ \nonumber
\end{eqnarray}
\end{proposition}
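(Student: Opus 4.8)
The plan is to assemble $\tau_3$ term by term, following the structure of formula \eqref{2s+1-tension} with $s=1$, which for a Hopf cylinder reads
\[
\tau_3 = \overline{\Delta}^2 \tau - \overline{R}\bigl(\overline{\Delta}\tau, d\varphi(e_i)\bigr) d\varphi(e_i) - \overline{R}\bigl(\nabla^\varphi_{e_i}\tau,\tau\bigr) d\varphi(e_i),
\]
since the $\ell$-sum is empty when $s=1$. The three pieces on the right are exactly what Lemmata \ref{lemma-Delta2tau-cilindro} and \ref{lemma-R1-R2} provide, so the proof reduces to substituting the explicit values $A=3\kappa_g\dot{\kappa}_g$, $B=-\ell\dot{\kappa}_g$, $C=\ddot{\kappa}_g - \tfrac{\ell^2}{2}\kappa_g - \kappa_g^3$ from Lemma \ref{lemma-Deltatau-cilindro} into \eqref{Delta2taucilindro}, adding the two curvature contributions, and collecting the coefficients of $X$, $E_3$, and $\eta$ separately (the frame $\{X,E_3,\eta\}$ being orthonormal and adapted to $\Sigma$, by \eqref{frame-field-adapted-Sigma} and Lemma \ref{lemma-nablaXX}).

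First I would compute the $X$-component. From Lemma \ref{lemma-Delta2tau-cilindro} the $X$-coefficient of $\overline{\Delta}^2\tau$ is $\tfrac{\ell^2}{4}A + A\kappa_g^2 - \ddot{A} - \tfrac{\ell}{2}B\kappa_g - 2\dot{C}\kappa_g - C\dot{\kappa}_g$, and from Lemma \ref{lemma-R1-R2} the curvature terms add $-\tfrac{\ell^2}{4}A$ to this component (the second curvature term has no $X$-part). The $\tfrac{\ell^2}{4}A$ cancels; substituting $A,B,C$ and differentiating — note $\dot{A}=3\dot{\kappa}_g^2 + 3\kappa_g\ddot{\kappa}_g$, $\ddot{A}=9\dot{\kappa}_g\ddot{\kappa}_g + 3\kappa_g\kappa_g^{(3)}$, $\dot{C}=\kappa_g^{(3)} - \tfrac{\ell^2}{2}\dot{\kappa}_g - 3\kappa_g^2\dot{\kappa}_g$ — and gathering terms should yield $2\dot{\kappa}_g(\ell^2\kappa_g + 5\kappa_g^3 - 5\ddot{\kappa}_g) - 5\kappa_g\kappa_g^{(3)}$. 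Next the $E_3$-component: Lemma \ref{lemma-Delta2tau-cilindro} gives $-\tfrac{\ell}{2}A\kappa_g + \tfrac{\ell^2}{4}B - \ddot{B} + \ell\dot{C}$, and Lemma \ref{lemma-R1-R2} adds $-\tfrac{\ell^2}{4}B$, again cancelling a term; with $\ddot{B}=-\ell\kappa_g^{(3)}$ and the expression for $\dot{C}$ above, everything should factor through an overall $\tfrac{\ell}{2}$ and produce $\tfrac{\ell}{2}\bigl[-(\ell^2 + 9\kappa_g^2)\dot{\kappa}_g + 4\kappa_g^{(3)}\bigr]$. Finally the $\eta$-component is the busiest: from Lemma \ref{lemma-Delta2tau-cilindro} one has $2\dot{A}\kappa_g + A\dot{\kappa}_g - \ell\dot{B} - \ddot{C} + C\kappa_g^2 + \tfrac{\ell^2}{2}C$, to which Lemma \ref{lemma-R1-R2} adds $(-4m + \tfrac{\ell^2}{2})C + (4m - \tfrac{3\ell^2}{4})\kappa_g^3$; using $\ddot{C}=\kappa_g^{(4)} - \tfrac{\ell^2}{2}\ddot{\kappa}_g - 6\kappa_g\dot{\kappa}_g^2 - 3\kappa_g^2\ddot{\kappa}_g$ and $-\ell\dot{B}=\ell^2\ddot{\kappa}_g$, one collects the coefficients of $\kappa_g^{(4)}$, $\ddot{\kappa}_g$, $\dot{\kappa}_g^2\kappa_g$, and the pure powers of $\kappa_g$ times $\ell^2, m$, which should reorganize into $\tfrac14\bigl[\kappa_g(60\dot{\kappa}_g^2 - (\ell^2 + 4\kappa_g^2)(2\ell^2 - 8m + \kappa_g^2)) + 2(5\ell^2 - 8m + 20\kappa_g^2)\ddot{\kappa}_g - 4\kappa_g^{(4)}\bigr]$.

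The main obstacle is purely bookkeeping: the $\eta$-component involves several cancellations and regroupings among terms of different weights (derivatives of $\kappa_g$ versus powers of $\kappa_g$, and the constants $\ell^2$ versus $m$), and it is easy to drop a factor or misplace a coefficient of $\tfrac14$ or $2m$. I would organize the computation by collecting, in each of the three components, the coefficient of each monomial $\kappa_g^{(4)}$, $\kappa_g\kappa_g^{(3)}$, $\dot{\kappa}_g\ddot{\kappa}_g$, $\dot{\kappa}_g^3$ (resp. $\kappa_g^{(3)}$, $\kappa_g^2\dot{\kappa}_g$, $\dot{\kappa}_g$ for the $E_3$-part), and separately the terms linear in $C$ carrying the curvature constants, and verify that the $\tfrac{\ell^2}{4}A$ and $\tfrac{\ell^2}{4}B$ cancellations in the $X$ and $E_3$ components occur as expected — these cancellations are the one structural feature worth checking explicitly rather than by brute force. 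Once the three components match the claimed expressions, Proposition \ref{prop-3tension-hopfcilidro} follows, since $\tau_3$ is by definition the sum of the contributions tabulated in Lemmata \ref{lemma-Delta2tau-cilindro}--\ref{lemma-R1-R2}.
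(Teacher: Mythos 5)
Your proposal is correct and follows exactly the route the paper takes: the paper's entire proof consists of adding the three components of $\overline{\Delta}^2\tau$ from Lemma\link\ref{lemma-Delta2tau-cilindro} to the two curvature terms of Lemma\link\ref{lemma-R1-R2}, substituting $A=3\kappa_g\dot\kappa_g$, $B=-\ell\dot\kappa_g$, $C=\ddot\kappa_g-\tfrac{\ell^2}{2}\kappa_g-\kappa_g^3$, and simplifying. Your intermediate derivatives and the cancellation of the $\tfrac{\ell^2}{4}A$ and $\tfrac{\ell^2}{4}B$ terms all check out against \eqref{tension3-Hopfcilindro}.
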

We deduce from \eqref{tension3-Hopfcilindro} that a Hopf cylinder $\Sigma$ is $3$-harmonic if and only if the geodesic curvature $\kappa_g$ of its base curve $\gamma$ verifies:
\begin{equation}\label{harmonicity3-hopfcilingro}
\begin{cases}
2\dot{\kappa}_g(\ell^2 \kappa_g +5 \kappa_g^3-5\ddot{ \kappa}_g )-5\kappa_g \kappa_g^{(3)}=0 \\
-\ell(\ell^2+9  \kappa_g^2) \dot{\kappa}_g + 4\ell  \kappa_g^{(3)}=0 \\
\kappa_g \left ( 60 \dot{\kappa}_g^2-(\ell^2+4 \kappa_g^2)(2 \ell^2-8m+\kappa_g^2) \right )+2(5 \ell^2-8m+20 \kappa_g^2) \ddot{\kappa}_g -4 \kappa_g^{(4)}=0 \,.
\end{cases}
\end{equation}
\begin{proof}[{\bf Proof of Theorem\link\ref{Th-HopfcilindroCMC}}] Because $\tau=-\kappa_g \eta$, the proof amounts to showing that the base curve $\gamma$ of $\Sigma$ has constant geodesic curvature $\kappa_g$. We denote by $K1,K2,K3$ respectively the left-hand sides of the three equations in \eqref{harmonicity3-hopfcilingro}.

\textbf{Case $\ell \neq 0$}

We argue by contradiction. So, let us suppose that \eqref{harmonicity3-hopfcilingro} admits a nonconstant solution. It follows that there exists an open interval $I$ such that $\kappa_g(s), \dot{\kappa}_g(s)$ are both different from zero on $I$. We work on $I$. First, from 
\[
4 \ell K1+ 5 \kappa_g K2=0 
\]
we deduce that
\begin{equation}\label{Ksecondo}
\ddot{\kappa}_g= \frac{\kappa_g}{40} (3 \ell^2 -5 \kappa_g^2) \,.
\end{equation}
Next, replacing the derivative of \eqref{Ksecondo} into $K2$ gives
\[
-\frac{7}{10}\ell(\ell^2+15 \kappa_g^2) \dot\kappa_g=0  \,,
\]
Thus, since $\ell\neq 0$, we must have that  $\dot\kappa_g=0$ obtaining a contradiction.

\textbf{Case $\ell = 0$} 

In this case \eqref{harmonicity3-hopfcilingro} is equivalent to

\begin{equation}\label{harmonicity3-hopfcilingro-l=0}
\left \{
\begin{array}{ll}
{\rm (i)}&2\dot{\kappa}_g( \kappa_g^3-\ddot{ \kappa}_g )-\kappa_g \kappa_g^{(3)}=0 \\
{\rm (ii)}&15 \kappa_g\dot{\kappa}_g^2- \kappa_g^3(\kappa_g^2-8m)+2(5 \kappa_g^2-2m) \ddot{\kappa}_g -\kappa_g^{(4)}=0 \,.
\end{array}
\right .
\end{equation}
Explicit integration of \eqref{harmonicity3-hopfcilingro-l=0} (i) yields
\begin{equation}\label{eq-k2inc}
2 \kappa_g \ddot\kappa_g=\kappa^4_g-\dot\kappa^2_g +c,
\end{equation}
where $c$ is a real constant. Taking the first and the second derivative of \eqref{eq-k2inc} we obtain the expression of $\ddot \kappa_g$, $\kappa^{(3)}_g$ and $\kappa^{(4)}_g$ in terms of $\kappa_g$, $\dot\kappa_g$ and $c$. Substituting these expressions in \eqref{harmonicity3-hopfcilingro-l=0} (ii) we obtain
\begin{equation}\label{eqkprimo2-1}
c^2 + 12 m \kappa_g^6 + 7 \kappa_g^8 - 8 c \dot\kappa_g^2 + 
 7 \dot\kappa_g^4 + 4 m \kappa_g^2 (\dot\kappa_g^2-c) + 
 2 \kappa_g^4 (5 c + 7 \dot\kappa_g^2)=0\,.
\end{equation}
If $m=c=0$, then \eqref{eqkprimo2-1} becomes 
\[
7(\kappa_g^2+\dot\kappa_g^2)^2=0
\]
from which we obtain a contradiction. Now, let $(m,c)\neq(0,0)$. From \eqref{eqkprimo2-1} we obtain
\begin{equation}\label{eqkprimo2-2}
\dot\kappa_g^2=\frac{1}{7}\left(4c - 2m \kappa_g^2 - 7 \kappa_g^4\pm \sqrt{9 c^2+12 c m \kappa_g^2+(4 m^2-126 c) \kappa_g^4-56 m \kappa_g^6 }\right)\,.
\end{equation}
We can restrict our attention to an open subinterval of $I$ where 
\[9 c^2+12 c m \kappa_g^2+(4 m^2-126 c) \kappa_g^4-56 m \kappa_g^6 
\] 
is positive. Taking the derivative of \eqref{eqkprimo2-2} (we choose the solution with the sign $+$ before the square root, the other case is similar) we obtain an expression of $\ddot\kappa_g$ as a function of $\kappa_g$ and $c$. Substituting this expression and \eqref{eqkprimo2-2} into \eqref{eq-k2inc} we easily obtain that $\kappa_g$ is a root of the following polynomial with constant coefficients
\[
7056 m \kappa_g^{10} + 196 (81c+26 m^2)\kappa_g^8+384(42cm-m^3)\kappa_g^6+12c(945c-121m^2)\kappa_g^4-1764c^2m\kappa_g^2-567c^3\,.
\]
Since $(m,c)\neq(0,0)$, it follows again that $\kappa_g$ is constant, a contradiction. Thus the proof is ended.
\end{proof}
\begin{proof}[{\bf Proof of Theorem\link\ref{Th-HopfcilindroCMC-espiliciti}}] The proof follows easily from Proposition\link\ref{prop-3tension-hopfcilidro} using the assumption that $\kappa_g$ is a constant. 
\end{proof}
\section{Proof of Theorem\link\ref{Th-triharm-isop-BCV-spaces}}\label{sec-proofs2}
Our first goal is to compute all the terms which occur in the expression of the $3$-tension field
\begin{equation}\label{3-tension}
\tau_{3}(\varphi)=\overline{\Delta}^{2}\tau(\varphi)-R \left(\overline{\Delta} \tau(\varphi), d \varphi (e_i)\right ) d \varphi (e_i)\,-\,R \Big( \nabla^\varphi_{e_i}\, \tau(\varphi),  \tau(\varphi)\Big ) d \varphi (e_i)\,. 
\end{equation}
For this purpose, we now establish a series of useful preliminary lemmata.
\begin{lemma}\label{Lemma-tecnico-1} Let $\varphi:M^2 \to  M^3_{m,\ell}$ be  an oriented surface. Let $A$ denote the shape operator, $f=(1/2) \trace A$ the mean curvature function and $\eta$ the unit normal. Then
\begin{itemize}
\item[{\rm (a)}] $(\nabla A) (X,Y)=(\nabla A) (Y,X)-(R(X,Y)\eta)^\top$;
\item[{\rm (b)}] $\langle (\nabla A) (X,Y), Z \rangle=\langle (\nabla A) (X,Z), Y \rangle$;
\item[{\rm (c)}] $\trace (\nabla A) (\cdot,\cdot)= 2 \grad f+(4m-\ell^2)\, \nu \, E_3^\top$. 
\end{itemize}
\end{lemma}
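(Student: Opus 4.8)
The plan is to prove this lemma by exploiting the standard structural identities for the shape operator of a surface combined with the two special features of the BCV ambient: the explicit curvature tensor \eqref{tensore-curvatura-general-expression} and the equation \eqref{nablaE3T} describing the covariant derivative of $E_3^\top$. Part (a) is the classical Codazzi equation: differentiating the definition $\overline\nabla_X Y = \nabla_X Y + \langle A(X),Y\rangle\eta$ and $\overline\nabla_X\eta = -A(X)$ and comparing $\overline R(X,Y)\eta$ computed two ways, the tangential component gives exactly $(\nabla A)(X,Y)-(\nabla A)(Y,X) = -(\overline R(X,Y)\eta)^\top$; since the codimension is one and $\overline R = R$ on this component up to the Gauss terms which are normal, this is immediate. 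Part (b) is the total symmetry of $\langle(\nabla A)(X,Y),Z\rangle$ in the last two slots, which follows from $A$ being self-adjoint together with the metric compatibility of $\nabla$: $\langle(\nabla A)(X,Y),Z\rangle = X\langle A(Y),Z\rangle - \langle A(\nabla_X Y),Z\rangle - \langle A(Y),\nabla_X Z\rangle$, and since $\langle A(Y),Z\rangle = \langle Y,A(Z)\rangle$ everything is symmetric under $Y\leftrightarrow Z$.

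For part (c), which is the substantive one, I would fix the point $p$ and work with the geodesic frame $\{X_1,X_2\}$ adapted to $J$ introduced just before \eqref{nablaE3-eta}, so that at $p$ all $\nabla_{X_i}X_j$ vanish. Then $\trace(\nabla A)(\cdot,\cdot) = \sum_i (\nabla A)(X_i,X_i) = \sum_i \nabla_{X_i}(A(X_i))$ at $p$. Using (a) to swap the arguments, $\sum_i(\nabla A)(X_i,X_i) = \sum_i(\nabla A)(X_i,X_i)$ is of course trivial, so instead I use (b): for any tangent $Z$,
\[
\langle \trace(\nabla A)(\cdot,\cdot), Z\rangle = \sum_i \langle (\nabla A)(X_i,X_i), Z\rangle = \sum_i \langle (\nabla A)(X_i,Z), X_i\rangle = \sum_i \langle (\nabla A)(Z,X_i) - (R(Z,X_i)\eta)^\top, X_i\rangle,
\]
and $\sum_i\langle(\nabla A)(Z,X_i),X_i\rangle = \nabla_Z(\trace A) = 2\,Z(f) = \langle 2\grad f, Z\rangle$. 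It remains to evaluate the curvature term $-\sum_i \langle (R(Z,X_i)\eta)^\top, X_i\rangle = -\sum_i \langle R(Z,X_i)\eta, X_i\rangle$, which I compute from \eqref{tensore-curvatura-general-expression}. The $(4m-3\ell^2/4)$-piece contributes $-\sum_i\langle -\langle Z,\eta\rangle X_i + \langle X_i,\eta\rangle Z, X_i\rangle = 0$ since $\langle Z,\eta\rangle = \langle X_i,\eta\rangle = 0$; the $(4m-\ell^2)$-piece, after discarding all terms containing a factor $\langle Z,\eta\rangle$, $\langle X_i,\eta\rangle$, or $\langle E_3,\eta\rangle X_i$-components that pair against $X_i$ to give $\langle \eta,\cdot\rangle$, reduces to $(4m-\ell^2)\,\nu\,\langle E_3^\top, Z\rangle$ after using $\sum_i \langle E_3,X_i\rangle X_i = E_3^\top$ and $\langle E_3,\eta\rangle = \nu$. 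Collecting, $\langle\trace(\nabla A)(\cdot,\cdot),Z\rangle = \langle 2\grad f + (4m-\ell^2)\nu E_3^\top, Z\rangle$ for all $Z$, which is (c).

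The main obstacle is purely bookkeeping inside the curvature computation for (c): \eqref{tensore-curvatura-general-expression} has six terms and one must carefully track which slot each of $Z$, $X_i$, $\eta$ sits in, remembering that $\langle X_i,\eta\rangle = 0$ kills several terms outright and that $\sum_i \langle X_i,E_3\rangle X_i$ is the tangential projection $E_3^\top$ while $\sum_i\langle X_i,E_3\rangle\langle X_i,Z\rangle = \langle E_3^\top,Z\rangle = \langle E_3,Z\rangle$. One subtlety worth a line of care: in the term $\langle Y,Z\rangle\langle X,E_3\rangle E_3$ of \eqref{tensore-curvatura-general-expression} with $(X,Y) = (Z,X_i)$ and pairing against $X_i$, one gets $\langle X_i,\eta\rangle\langle Z,E_3\rangle\langle E_3,X_i\rangle = 0$; systematically every surviving contribution carries exactly one factor of $\nu$, which is the sanity check that the answer has the stated form. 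No genuinely hard analytic step is involved — the lemma is a preparatory identity — so beyond this careful contraction the proof is routine, and I would simply remark that (a), (b) are standard and spell out only the contraction in (c).
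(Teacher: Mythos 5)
Your proposal is correct and takes essentially the same route as the paper: parts (a) and (b) are handled identically (Codazzi, plus the symmetry of $\langle(\nabla A)(X,\cdot),\cdot\rangle$ coming from self-adjointness of $A$ and metric compatibility), and for (c) both arguments reduce the trace to $2\grad f$ via the Codazzi swap and then contract the curvature tensor \eqref{tensore-curvatura-general-expression} against the frame; your test vector $Z$ simply plays the role of the paper's second frame index $X_j$. One transcription slip should be fixed: applying (a) with $X=X_i$, $Y=Z$ gives $(\nabla A)(X_i,Z)=(\nabla A)(Z,X_i)-(\overline{R}(X_i,Z)\eta)^\top$, whereas you wrote $-(\overline{R}(Z,X_i)\eta)^\top$; by antisymmetry of $\overline{R}$ in its first two slots, the term as you wrote it contracts to $-\sum_i\langle \overline{R}(Z,X_i)\eta,X_i\rangle=-(4m-\ell^2)\,\nu\,\langle E_3^\top,Z\rangle$, the opposite of what you assert. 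The value $+(4m-\ell^2)\,\nu\,\langle E_3^\top,Z\rangle$ that you state (and that yields the lemma) is what the \emph{correct} term $-\sum_i\langle \overline{R}(X_i,Z)\eta,X_i\rangle$ produces, so the final identity stands; this is a sign typo in one intermediate formula rather than a gap in the argument.
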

\begin{proof} (a) This is just the Codazzi equation.

(b) 
\begin{eqnarray*}
\langle (\nabla A) (X,Y), Z \rangle&=&\langle \nabla_X A(Y)-A(\nabla_X Y), Z \rangle\\
&=&\langle \nabla_X A(Y),Z\rangle-\langle A(\nabla_X Y), Z \rangle\\
&=&X \langle A(Y),Z \rangle -\langle A(Y),\nabla_X Z \rangle-
\langle \nabla_X Y, A(Z) \rangle \,.
\end{eqnarray*}
Because the above expression is symmetric with respect to $Y,Z$ the conclusion follows.

(c) Let $p \in M^2$ be an arbitrarily fixed point and consider a geodesic frame field $\{ X_i\}_{i=1}^2$ around $p$. At $p$ we have: 
\begin{eqnarray*}
{\rm Trace}(\nabla A)(\cdot,\cdot)&=&\sum_{i=1}^2 (\nabla A)(X_i,X_i)=\sum_{i=1}^2 (\nabla_{X_i} A)(X_i)=\sum_{i=1}^2 \nabla_{X_i} A(X_i)\\
&=&\sum_{i,j=1}^2 \nabla_{X_i} (\langle A(X_i),X_j \rangle X_j)=\sum_{i,j=1}^2 \nabla_{X_i} (\langle X_i,A(X_j )\rangle X_j)\\
&=&\sum_{i,j=1}^2 \langle X_i, \nabla_{X_i}A(X_j) \rangle X_j=
\sum_{i,j=1}^2 \langle X_i, (\nabla A) (X_i , X_j) \rangle X_j\,.
\end{eqnarray*}
Now, first using (a) and then the explicit expression of the curvature tensor field \eqref{tensore-curvatura-general-expression}, we continue the previous sequence of equalities as follows:
\begin{eqnarray*}
{\rm Trace}(\nabla A)(\cdot,\cdot)&=&\ldots =\sum_{i,j=1}^2 \langle X_i, \nabla_{X_j} A (X_i) -\overline{R}(X_i,X_j) \eta \rangle X_j\\
&=&\sum_{i,j=1}^2 \langle X_i, \nabla_{X_j} A (X_i)\rangle X_j -\langle X_i, \overline{R}(X_i,X_j) \eta \rangle X_j \\
&=&\sum_{i,j=1}^2 \Big (X_j \langle X_i,A(X_i) \rangle \Big ) X_j\\
&&+\sum_{i,j=1}^2 (4m-\ell ^2) \Big [ \langle X_j,E_3 \rangle \nu- \langle X_i,E_3 \rangle \nu \delta_{ij}\Big ]X_j \\
&=&2 \sum_{j=1}^2 X_j ( f )X_j+(4m-\ell^2) \Big [2\nu E_3^\top-\nu E_3^\top \Big ]\\
&=&2 \grad f +(4m-\ell^2) \nu E_3^\top 
\end{eqnarray*}
and so the proof of the lemma is ended.
\end{proof}
Next, we compute:
\begin{lemma}\label{Lemma-Delta-H} Let $\varphi:M^2 \to  M^3_{m,\ell}$ be an oriented surface and denote by ${\mathbf H}= f \eta$ its mean curvature vector field. Then
\begin{equation}\label{Delta-H-formula}
 \overline{\Delta} {\mathbf H}= (\Delta f + f |A|^2) \eta + 2 A(\grad f ) + 2 f \grad f +(4m-\ell^2)\,f\, \nu \, E_3^\top \,.
\end{equation}
\end{lemma}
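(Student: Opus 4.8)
The plan is to compute $\overline{\Delta}{\mathbf H}$ directly from the definition of the rough Laplacian applied to the section ${\mathbf H}=f\eta$ of $\varphi^{-1}TM^3_{m,\ell}$, working at an arbitrarily fixed point $p\in M^2$ with respect to a geodesic frame field $\{X_i\}_{i=1}^2$ around $p$ (so that $\nabla^M_{X_i}X_i=0$ at $p$). By \eqref{roughlaplacian} we have at $p$
\[
\overline{\Delta}{\mathbf H}=-\sum_{i=1}^2 \overline{\nabla}_{X_i}\overline{\nabla}_{X_i}(f\eta)\,.
\]
First I would expand $\overline{\nabla}_{X_i}(f\eta)=X_i(f)\,\eta+f\,\overline{\nabla}_{X_i}\eta$ and, using the Weingarten formula $\overline{\nabla}_{X_i}\eta=-A(X_i)$, rewrite this as $X_i(f)\,\eta-f\,A(X_i)$. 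Differentiating once more and again invoking Weingarten and Gauss ($\overline{\nabla}_{X_i}X_j=\nabla_{X_i}X_j+B(X_i,X_j)=\nabla_{X_i}X_j+\langle A(X_i),X_j\rangle\eta$), one collects the tangential and normal parts. The normal part produces $\bigl(\Delta f + f|A|^2\bigr)\eta$ — here the sign convention $\Delta f=-\sum_i X_i(X_i(f))$ at $p$ is exactly what makes the Laplacian term appear with the stated sign, and $|A|^2$ emerges from $\sum_i\langle A(X_i),A(X_i)\rangle$. The tangential part produces $2A(\grad f)+2f\grad f$ plus a term of the form $-f\,\trace(\nabla A)(\cdot,\cdot)$ coming from $\sum_i\nabla_{X_i}A(X_i)$ after using that the frame is geodesic at $p$.

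The key input that converts this last term into the claimed $E_3^\top$-contribution is Lemma~\ref{Lemma-tecnico-1}(c): $\trace(\nabla A)(\cdot,\cdot)=2\grad f+(4m-\ell^2)\,\nu\,E_3^\top$. So $-f\,\trace(\nabla A)(\cdot,\cdot)=-2f\grad f-(4m-\ell^2)f\nu E_3^\top$; but careful bookkeeping of signs in the double covariant derivative shows the $\sum_i\nabla_{X_i}A(X_i)$ term actually enters with a sign that yields $+2A(\grad f)+2f\grad f+(4m-\ell^2)f\nu E_3^\top$ after combining with the $A(X_i(f))$ contributions — i.e. the two $2f\grad f$-type pieces add rather than cancel. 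I would organize the computation so that the tangential component is written as $2A(\grad f)+\bigl(2\grad f+(4m-\ell^2)\nu E_3^\top\bigr)f$ by direct substitution of (c), rather than re-deriving the Codazzi manipulation from scratch.

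The main obstacle is purely a matter of sign discipline: there are three competing conventions in play simultaneously — the sign of $\overline{\Delta}$ in \eqref{roughlaplacian}, the sign of $\Delta f$ (flagged in the paper as $\Delta f=-f''$ on $\R$, opposite to some references), and the sign of the shape operator via $\overline{\nabla}_X\eta=-A(X)$. Getting the $\eta$-coefficient to come out as exactly $\Delta f+f|A|^2$ (and not $-\Delta f+f|A|^2$ or similar) requires threading all three correctly, and it is the step most prone to error. Everything else — expanding $\overline{\nabla}_{X_i}\overline{\nabla}_{X_i}(f\eta)$, splitting into tangential and normal parts, and invoking Lemma~\ref{Lemma-tecnico-1}(c) — is routine. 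I would double-check the final formula against the known biharmonic hypersurface equation \eqref{BHEq} of Theorem~\ref{MTH} specialized to $N=M^3_{m,\ell}$: since $\overline{\rm Ric}(\eta)$ in a BCV-space has a computable $E_3^\top$-component, consistency of the $\eta$-part of $\overline{\Delta}{\mathbf H}-\overline{\rm Ric}(\eta)f$ with the first equation of \eqref{BHEq} provides a reassuring sanity check on the signs.
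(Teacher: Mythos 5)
Your proposal is correct and follows essentially the same route as the paper's proof: expand $\overline{\nabla}_{X_i}\overline{\nabla}_{X_i}(f\eta)$ in a geodesic frame at $p$ using the Weingarten and Gauss formulas, split into normal and tangential parts, and convert the $f\sum_i(\nabla A)(X_i,X_i)$ term via Lemma~\ref{Lemma-tecnico-1}(c). The sign bookkeeping you flag does work out exactly as you describe, since the overall minus in $\overline{\Delta}=-\sum_i\nabla^\varphi_{X_i}\nabla^\varphi_{X_i}$ turns $-f(\nabla A)(X_i,X_i)$ into $+f\,\trace(\nabla A)(\cdot,\cdot)$.
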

\begin{proof} We work with a geodesic frame field $\left \{ X_i \right \}_{i=1}^2$ around an arbitrarily fixed point $p \in M^2$. Again, we simplify the notation writing $\nabla$ for $\nabla^{M^2}$. Since ${\mathbf H}=f \eta$, around $p$ we have:
\[
\nabla_{X_i}^{\varphi} {\mathbf H}=\nabla_{X_i}^{\perp} {\mathbf H}-A_{{\mathbf H}}(X_i)=\left ( X_i f \right )\eta -f A  \left (X_i \right )\,.
\]
Then, denoting by $B$ the second fundamental form, at $p$ we have:
\begin{eqnarray*}
\nabla_{X_i}^{\varphi}\nabla_{X_i}^{\varphi} {\mathbf H}&=&\left(X_i X_i f \right )\eta- \left( X_i f \right )A  \left (X_i \right )- \left( X_i f \right )A  \left (X_i \right )-f \big( \nabla_{X_i} A  \left (X_i \right )+B\left( X_i, A  \left (X_i \right )\right )\big )\nonumber\\
&=&\left(X_i X_i f \right )\eta- 2\left( X_i f \right )A  \left (X_i \right )-f (\nabla A)(X_i,X_i)-f|A  \left (X_i \right )|^2 \eta \,.
\label{eq:nabla2H}
\end{eqnarray*}
Now, taking the sum over $i$ and using Lemma~\ref{Lemma-tecnico-1}, we obtain \eqref{Delta-H-formula} (note that the sign convention for $\Delta$ and $\overline{\Delta}$ is as in \eqref{roughlaplacian}).
\end{proof}
In the special case that the surface is CMC, say $f= \alpha$, then \eqref{Delta-H-formula} becomes:
\begin{equation}\label{Delta-H-formula-CMC}
 \overline{\Delta} {\mathbf H}= \alpha\, |A|^2 \eta + \alpha\,(4m-\ell^2)\,\nu \, E_3^\top \,.
\end{equation}
Since the mean curvature vector field and the tension field are related by $\tau(\varphi)=2 {\mathbf H}$, setting for convenience of notation $\tau(\varphi)=\tau$ and
\begin{equation}\label{V-definition}
V=\nu \, E_3^\top \,,
\end{equation}
we rewrite \eqref{Delta-H-formula-CMC} as
\begin{equation}\label{Delta-tau}
 \overline{\Delta} \tau=2 \alpha\, |A|^2 \eta +2 \alpha\,(4m-\ell^2)\,V\,.
\end{equation}
Next, from \eqref{Delta-tau} and performing a computation as in Lemma~\ref{Lemma-Delta-H} we find: 
\begin{lemma}\label{Lemma-Delta2-tau} Let $\varphi:M^2 \to  M^3_{m,\ell}$ be an oriented surface and assume that $f\equiv \alpha$. Then
\begin{equation}\label{Delta-2-H-formula}
 \overline{\Delta}^2 \tau = 2\alpha (|A|^4 + \Delta|A|^2) \, \eta + 4 \alpha A(\grad |A|^2)+2 \alpha |A|^2 (4m-\ell^2)\,V +2 \alpha  (4m-\ell^2)\,\overline{\Delta}\,V \,.
\end{equation}
\end{lemma}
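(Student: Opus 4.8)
The plan is to apply the rough Laplacian $\overline{\Delta}$ once more to the right-hand side of \eqref{Delta-tau} and exploit its linearity. Writing \eqref{Delta-tau} as $\overline{\Delta}\tau = 2\alpha|A|^2\,\eta + 2\alpha(4m-\ell^2)\,V$, we immediately get
\[
\overline{\Delta}^2\tau = 2\alpha\,\overline{\Delta}\!\left(|A|^2\eta\right) + 2\alpha(4m-\ell^2)\,\overline{\Delta}V \,,
\]
so the whole task reduces to computing $\overline{\Delta}(g\,\eta)$ for the scalar function $g=|A|^2$ on $M^2$; the summand $\overline{\Delta}V$ is deliberately left unexpanded at this stage (it will be analysed separately later).

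First I would simply rerun, with a generic function $g$ in place of the (here constant) mean curvature, the geodesic-frame computation carried out in the proof of Lemma~\ref{Lemma-Delta-H}. Fixing $p\in M^2$ and a geodesic frame $\{X_i\}_{i=1}^2$ around $p$, one has $\nabla^\varphi_{X_i}(g\eta)=(X_i g)\eta-g\,A(X_i)$ and hence, at $p$,
\[
\nabla^\varphi_{X_i}\nabla^\varphi_{X_i}(g\eta)=(X_iX_i g)\,\eta-2(X_i g)\,A(X_i)-g\,(\nabla A)(X_i,X_i)-g\,|A(X_i)|^2\,\eta \,,
\]
where the last term is the contribution of the second fundamental form $B(X_i,A(X_i))=|A(X_i)|^2\eta$. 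Summing over $i$, using the sign convention \eqref{roughlaplacian}, and invoking Lemma~\ref{Lemma-tecnico-1}(c) — which, since here $f\equiv\alpha$ is constant, reduces to $\trace(\nabla A)(\cdot,\cdot)=(4m-\ell^2)\,\nu\,E_3^\top=(4m-\ell^2)V$ — one obtains the general formula
\[
\overline{\Delta}(g\,\eta)=\left(\Delta g+g\,|A|^2\right)\eta+2A(\grad g)+g\,(4m-\ell^2)\,V \,.
\]

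Specialising to $g=|A|^2$ and substituting into the displayed expression for $\overline{\Delta}^2\tau$ gives exactly \eqref{Delta-2-H-formula}. I do not expect any genuine obstacle here: the argument is essentially mechanical, a verbatim repetition of the proof of Lemma~\ref{Lemma-Delta-H}. The only points that need a little attention are (i) keeping the normal term $B(X_i,A(X_i))=|A(X_i)|^2\eta$, whose trace produces the $|A|^4$ coefficient, and (ii) using the hypothesis $f\equiv\alpha$ to discard the $\grad f$ summand in Lemma~\ref{Lemma-tecnico-1}(c), so that the tangential part of $\overline{\Delta}(|A|^2\eta)$ collapses to $2A(\grad|A|^2)$ plus a multiple of $V$, leaving $\overline{\Delta}V$ as the only term that is not computed explicitly.
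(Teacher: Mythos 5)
Your proposal is correct and follows exactly the route the paper intends: the paper itself only says that \eqref{Delta-2-H-formula} is obtained ``from \eqref{Delta-tau} and performing a computation as in Lemma~\ref{Lemma-Delta-H}'', which is precisely your rerun of the geodesic-frame computation for $\overline{\Delta}(g\eta)$ with $g=|A|^2$. You also correctly handle the one subtlety, namely that the $\grad f$ term in $\trace(\nabla A)$ refers to the mean curvature and vanishes under the hypothesis $f\equiv\alpha$, so no spurious $2|A|^2\grad f$ term appears.
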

The first difficulty is to compute in a convenient way $\overline{\Delta}\,V$. We have
\begin{lemma}\label{Lemma-Delta-V}

\begin{eqnarray}\label{Delta-V}
\overline{\Delta}\,V&=& 2 \nu \Big (2 A^2(E_3^\top)+\frac{3\ell}{4}\big (A(J(E_3^\top))-J(A(E_3^\top)\big) \Big )\nonumber\\ 
&&+\Big\{(4m-\ell^2)(| E_3^\top|^2-\nu^2)+|A|^2+\frac{5\ell^2}{4} \Big \}\nu E_3^\top\\\nonumber
&&+\Big\{2|A( E_3^\top)|^2+\ell\langle J  E_3^\top,A( E_3^\top) \rangle-2 \nu^2 |A|^2-(4m-\ell^2)\nu^2| E_3^\top|^2 \Big \} \eta \,.
\end{eqnarray}
\end{lemma}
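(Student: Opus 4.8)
The plan is to localise and differentiate $V=\nu\,E_3^\top$ twice. Fix an arbitrary point $p\in M^2$ and use the geodesic frame field $\{X_1,X_2\}$ introduced after \eqref{nablaE3T}, normalised so that $J(X_1)=X_2$, $J(X_2)=-X_1$ and $\nabla_{X_i}X_j=0$ at $p$ (possible since $\nabla J=0$). By the definition \eqref{roughlaplacian} of the rough Laplacian, at $p$ we have $\overline{\Delta}V=-\sum_{i=1}^2\nabla^\varphi_{X_i}\nabla^\varphi_{X_i}V$. For the first covariant derivative, combining \eqref{nablaE3-eta} for $X_i(\nu)$ with \eqref{nablaE3T} for $\overline{\nabla}_{X_i}E_3^\top$ gives, at $p$,
\[
\nabla^\varphi_{X_i}V=-\Big\langle A(X_i)-\frac{\ell}{2}J(X_i),\,E_3^\top\Big\rangle\,E_3^\top+\nu^2\Big(A(X_i)-\frac{\ell}{2}J(X_i)\Big)+\nu\,\langle A(X_i),E_3^\top\rangle\,\eta\,,
\]
which I would record as the sum of a term tangent to $E_3^\top$, a second tangential term, and a normal term.

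The second differentiation is carried out termwise. Its ingredients are: \eqref{nablaE3-eta} and \eqref{nablaE3T} once more, for the derivatives of $\nu$ and of $E_3^\top$; the Weingarten formula $\overline{\nabla}_{X_i}\eta=-A(X_i)$ for the normal part; and $\nabla J=0$, which forces the tangential derivative of $J(E_3^\top)$ to equal $J$ applied to the tangential part of $\overline{\nabla}_{X_i}E_3^\top$, i.e. $\nu\big(J(A(X_i))+\frac{\ell}{2}X_i\big)$. The decisive input is the Codazzi identity encoded in Lemma~\ref{Lemma-tecnico-1}: on summing over $i$, the derivatives of the $A(X_i)$ reorganise into $\trace(\nabla A)$, which by Lemma~\ref{Lemma-tecnico-1}(c) equals $(4m-\ell^2)\,\nu\,E_3^\top$ in the CMC case $f\equiv\alpha$ at hand, so that this is the only point at which the ambient curvature, hence the factor $4m-\ell^2$, enters. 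It then remains to contract, using the elementary orthonormal-frame identities $\sum_i\langle A(X_i),w\rangle A(X_i)=A^2(w)$, $\sum_i\langle A(X_i),w\rangle X_i=A(w)$, $\sum_i\langle A(X_i),w\rangle J(X_i)=J(A(w))$, $\sum_i\langle J(X_i),w\rangle A(X_i)=-A(J(w))$, $\sum_i|A(X_i)|^2=|A|^2$ and $\sum_i\langle A(X_i),J(X_i)\rangle=0$, together with $|E_3^\top|^2=1-\nu^2$ and the splitting $E_3=E_3^\top+\nu\,\eta$, to regroup the outcome into the tangential part built from $A^2(E_3^\top)$ and the combination $A(J(E_3^\top))-J(A(E_3^\top))$, the tangential part proportional to $\nu\,E_3^\top$, and the normal part, thereby matching \eqref{Delta-V}.

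I expect the obstacle to be organisational rather than conceptual: one must keep tangential and normal components strictly separate through two rounds of covariant differentiation, and in particular handle carefully the cross terms in which $A$ and $J$ do not commute, since these produce exactly the $A^2(E_3^\top)$ contribution and the bracket $A(J(E_3^\top))-J(A(E_3^\top))$ in the first line of \eqref{Delta-V} — so the coefficient $\frac{3\ell}{4}$ and the signs there must be verified with attention. By contrast, the curvature contribution is straightforward, being entirely absorbed into Lemma~\ref{Lemma-tecnico-1}(c).
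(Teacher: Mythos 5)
Your plan coincides with the paper's own proof: the same geodesic frame adapted to $J$, the identical expression for $\nabla^\varphi_{X_i}V$ from \eqref{nablaE3T} and \eqref{nablaE3-eta}, termwise second differentiation, the trace of $\nabla A$ handled via Lemma~\ref{Lemma-tecnico-1}(c) in the CMC case as the sole source of the factor $4m-\ell^2$, and the same contraction identities to regroup into the $A^2(E_3^\top)$, $\nu E_3^\top$ and normal parts. The remaining work is indeed only the bookkeeping you anticipate, so the proposal is correct and follows essentially the same route as the paper.
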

\begin{proof} We can work with a geodesic frame field $\left \{ X_i \right \}_{i=1}^2$ such that $J(X_1)=X_2$, $J(X_2)=-X_1$ around an arbitrarily fixed point $p \in M^2$. Taking into account \eqref{nablaE3T} and \eqref{nablaE3-eta} we obtain around $p$:
\[
\nabla_{X_i}^\varphi V=\nu^2 \left (A(X_i)- \frac{\ell}{2} J(X_i)  \right)-
\langle A(X_i)-\frac{\ell}{2}J(X_i), E_3^\top \rangle E_3^\top +\nu \langle A(X_i), E_3^\top \rangle \eta\,. 
\]
Next, in a similar fashion, we compute at $p$:
\begin{eqnarray*}
\nabla_{X_i}^\varphi \big ( \nabla_{X_i}^\varphi V\big )&=&2 \nu \,X_i(\nu)\big ( A(X_i)- \frac{\ell}{2} J(X_i) \big ) \\
&&+\nu^2 \Big [(\nabla A) (X_i,X_i)+|A(X_i)|^2 \eta-\frac{\ell}{2}\langle A(X_i),J(X_i) \rangle \eta \Big ] \\
&&+\Big [\langle -(\nabla A)(X_i,X_i),E_3^\top \rangle -\nu \big |A(X_i)-\frac{\ell}{2}J(X_i)\big|^2 \Big ] E_3^\top \\\nonumber
&&-\langle A(X_i)- \frac{\ell}{2} J(X_i), E_3^\top \rangle\Big [\nu \big ( A(X_i)- \frac{\ell}{2} J(X_i) \big )+\langle A(X_i),E_3^\top \rangle \eta \Big ]\\
&&+X_i(\nu)\langle A(X_i),E_3^\top \rangle \eta+\nu \langle (\nabla A) (X_i,X_i),E_3^\top \rangle \eta\\
&&+\nu^2 \langle A(X_i),A(X_i)-\frac{\ell}{2}J(X_i) \rangle \eta-\nu \langle A(X_i),E_3^\top \rangle A(X_i)\,.
\end{eqnarray*}

Now, since $\left \{ X_i \right \}_{i=1}^2$ is a geodesic frame field, we have at $p$: 
\[
\overline{\Delta} V =-\sum_{i=1}^2 \nabla_{X_i}^\varphi \big ( \nabla_{X_i}^\varphi V\big ) \,.
\]
Therefore,
\begin{eqnarray}\label{DeltaV1}\nonumber
\overline{\Delta} V&=& -2 \nu \big ( A(\grad \nu)- \frac{\ell}{2} J(\grad \nu) \big ) \\\nonumber
&&-\nu^2 \Big [(4m-\ell^2)\nu E_3^\top+|A|^2 \eta-\frac{\ell}{2}\sum_{i=1}^2\langle A(X_i),J(X_i) \rangle \eta\Big ]\\
&&+\Big [(4m-\ell^2)\nu |E_3^\top|^2 +\sum_{i=1}^2 \nu \big |A(X_i)-\frac{\ell}{2}J(X_i)\big|^2 \Big ] E_3^\top \\\nonumber
&&+\sum_{i=1}^2\langle A(X_i)- \frac{\ell}{2} J(X_i), E_3^\top \rangle\Big [\nu \big ( A(X_i)- \frac{\ell}{2} J(X_i) \big )+\langle A(X_i),E_3^\top \rangle \eta \Big ]\\\nonumber
&&-\langle A(\grad \nu),E_3^\top \rangle \eta-(4m-\ell^2)\nu^2 |E_3^\top|^2 \eta\\\nonumber
&&-\nu^2\sum_{i=1}^2 \langle A(X_i),A(X_i)-\frac{\ell}{2}J(X_i) \rangle \eta+\nu \sum_{i=1}^2\langle A(X_i),E_3^\top \rangle A(X_i) \,.
\end{eqnarray} 
Now \eqref{Delta-V} can be computed using \eqref{nablaE3-eta} and performing some simplifications such as:
\[
\begin{split}
\langle A(X_1), J(X_1) \rangle+\langle A(X_2), J(X_2) \rangle&=0 \\
\sum_{i=1}^2 |A(X_i)-\frac{\ell}{2}J(X_i)\big|^2&=|A|^2 + \frac{\ell^2}{2}\\
\sum_{i=1}^2 \langle X_i,J(E_3^\top) \rangle\langle X_i,A(E_3^\top) \rangle\eta&=\langle J(E_3^\top),A(E_3^\top) \rangle \eta\\
\grad \nu=-\sum_{i=1}^2 \langle A(X_i)-\frac{\ell}{2}J(X_i),E_3^\top \rangle X_i&=-A(E_3^\top)-\frac{\ell}{2}J(E_3^\top)\,.
\end{split}
\]
Indeed,
\begin{eqnarray}\label{DeltaV2}
\overline{\Delta} V&=&2 \nu A^2(E_3^\top)+\ell \nu A(J( E_3^\top))-\ell \nu J(A( E_3^\top))+\nu \frac{\ell^2}{2}E_3^\top\nonumber \\\nonumber
&&-\nu^2 \Big [(4m-\ell^2)\nu E_3^\top+|A|^2 \eta\Big ]\\\nonumber
&&+ \Big [(4m-\ell^2)\nu |E_3^\top|^2 +\nu \left(|A|^2 +\frac{\ell^2}{2}\right) \Big ] E_3^\top \\
&&+\nu A^2 (E_3^\top) +\nu \frac{\ell}{2}A(J( E_3^\top))-\nu \frac{\ell}{2}J(A( E_3^\top))+\nu \frac{\ell^2}{4}E_3^\top\\
&&+\Big[|A(E_3^\top)|^2 +\frac{\ell}{2}\langle J(E_3^\top),A(E_3^\top) \rangle \Big ]\eta\nonumber\\\nonumber
&&+|A(E_3^\top)|^2\eta+\frac{\ell}{2}\langle J(E_3^\top),A(E_3^\top) \rangle \eta -(4m-\ell^2)\nu^2 |E_3^\top|^2 \eta\\\nonumber
&&-\nu^2 |A|^2 \eta+\nu A^2(E_3^\top )
\end{eqnarray} 
from which \eqref{Delta-V}
follows immediately (note that each line of \eqref{DeltaV1} is equal to the corresponding line of \eqref{DeltaV2}).
\end{proof}
Now, we can state the main result which is of independent interest and summarizes the preliminary work which we have carried in this section.
\begin{proposition}\label{Prop-tau3-general-expression} Let $M^2$ be an oriented surface in $M^3_{m,\ell}$. Assume that $M^2$ has CMC equal to $\alpha$. Then its $3$-tension field is
\begin{eqnarray}\label{tau3-general-case}
\nonumber
\tau_3&=&  \alpha \Big\{2\Delta|A|^2+ 2|A|^4 + |A|^2 \big[\ell^2(1+2\nu^2)-8m (1+\nu^2) \big]\\\nonumber
    && +4(4m-\ell^2)|A(E_3^\top)|^2+2\ell(4m-\ell^2) \langle J(E_3^\top),A(E_3^\top)\rangle -4 \alpha(4 m - \ell^2) \langle A(E_3^\top),E_3^\top\rangle\\
    && +2 \alpha^2 \ell^2 (3+4\nu^2)-32 m \alpha^2 (1+\nu^2)\Big \} \eta\\\nonumber
      &&+2 \alpha(4 m-\ell^2) \nu \Big \{3 |A|^2 + 4 \alpha^2 + 3 \ell^2 \nu^2 + 
   m (4 - 12 \nu^2)\Big \} E_3^\top \\\nonumber
   &&+4 \alpha(4m-\ell^2)\nu \Big \{2 A^2(E_3^\top)-\frac{3\ell}{4}J(A(E_3^\top))+\frac{3\ell}{4}(A(J(E_3^\top)) \Big \}-4 \alpha^2(4 m - \ell^2)\nu  A(E_3^\top)\\\nonumber
   &&+4\alpha A(\grad |A|^2)\,,
\end{eqnarray}
where $\nu$ is defined in \eqref{nu-definition}.
\end{proposition}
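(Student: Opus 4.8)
The plan is to specialize the general $3$-tension field formula \eqref{3-tension} to the CMC case $f \equiv \alpha$ and then substitute the four preliminary lemmata one after the other. First I would use Lemma~\ref{Lemma-Delta2-tau}, which already gives $\overline{\Delta}^2 \tau$ in terms of $|A|^2$, its Laplacian and gradient, and the auxiliary vector field $V = \nu E_3^\top$, together with the still-uncomputed quantity $\overline{\Delta} V$. For that last term I would plug in Lemma~\ref{Lemma-Delta-V} verbatim, so that $\overline{\Delta}^2 \tau$ becomes a fully explicit combination of $\eta$, $E_3^\top$, $A(E_3^\top)$, $A^2(E_3^\top)$, $J(A(E_3^\top))$, $A(J(E_3^\top))$ and $A(\grad|A|^2)$, with coefficients polynomial in $\alpha, m, \ell, \nu, |A|^2, |A(E_3^\top)|^2, |E_3^\top|^2$ and $\langle J(E_3^\top), A(E_3^\top)\rangle$.

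Next I would compute the two curvature terms in \eqref{3-tension}. Since $\tau = 2\alpha\eta$ is a constant multiple of $\eta$, the second curvature term $R(\nabla^\varphi_{e_i}\tau, \tau)\,d\varphi(e_i)$ vanishes identically (it is antisymmetric in the first two slots, both proportional to $\eta$), so only $R(\overline{\Delta}\tau, d\varphi(e_i))\,d\varphi(e_i)$ survives. Here I would use the expression \eqref{Delta-tau} for $\overline{\Delta}\tau$ and the explicit curvature tensor \eqref{tensore-curvatura-general-expression}, evaluating $\overline{R}(\,\cdot\,, X_i)X_i$ summed over an orthonormal tangent frame; the terms involving $\langle X_i, E_3\rangle$ will reorganize into contributions along $E_3^\top$ and $\eta$ via the identity $\sum_i \langle X_i, E_3\rangle X_i = E_3^\top$ and $|E_3|^2 = 1$, i.e. $|E_3^\top|^2 = 1 - \nu^2$. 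This is the same type of computation carried out in part (c) of Lemma~\ref{Lemma-tecnico-1} and in Lemma~\ref{lemma-R1-R2}, so it is routine.

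Having assembled $\tau_3 = \overline{\Delta}^2\tau - \overline{R}(\overline{\Delta}\tau, d\varphi(e_i))\,d\varphi(e_i)$, the remaining work is purely algebraic: collect the coefficients of the independent pieces $\eta$, $E_3^\top$, the ``$A$-part'' $\{2A^2(E_3^\top) - \tfrac{3\ell}{4}J(A(E_3^\top)) + \tfrac{3\ell}{4}A(J(E_3^\top))\}$, $A(E_3^\top)$, and $A(\grad|A|^2)$, using $|E_3^\top|^2 = 1-\nu^2$ to eliminate $|E_3^\top|^2$ wherever it appears. I would also use the symmetry $\langle A(E_3^\top), J(E_3^\top)\rangle$ versus $\langle J(A(E_3^\top)), E_3^\top\rangle$ (these differ by a sign since $J$ is skew) to make sure the off-diagonal shape-operator terms group as in the stated formula. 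The main obstacle is bookkeeping accuracy: several distinct sources — the $|A|^2(4m-\ell^2)V$ term, the $(4m-\ell^2)\overline{\Delta}V$ term, and the curvature term — all contribute to the $E_3^\top$ and $\eta$ components, and the numerical coefficients (the $3+4\nu^2$, $1+2\nu^2$, $1+\nu^2$, $4-12\nu^2$ factors, and the precise splitting of the $32m\alpha^2$ and $2\alpha^2\ell^2$ terms) must be tracked without error through the substitution $|E_3^\top|^2 = 1-\nu^2$. No conceptual difficulty arises; once the substitutions of Lemmata~\ref{Lemma-Delta2-tau}–\ref{Lemma-Delta-V} and the curvature computation are in hand, the identity \eqref{tau3-general-case} follows by grouping terms, and this is exactly what the sentence preceding the proposition asserts.
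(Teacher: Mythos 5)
Your steps (I) and (II) coincide with the paper's proof: substitute Lemma~\ref{Lemma-Delta2-tau} and Lemma~\ref{Lemma-Delta-V} for $\overline{\Delta}^2\tau$, then compute $\sum_i\overline{R}\big((\overline{\Delta}\tau)^\perp+(\overline{\Delta}\tau)^\top,X_i\big)X_i$ from \eqref{Delta-tau} and \eqref{tensore-curvatura-general-expression}. However, there is a genuine error in your treatment of the second curvature term. You claim that $\overline{R}\big(\nabla^\varphi_{e_i}\tau,\tau\big)d\varphi(e_i)$ vanishes because ``both slots are proportional to $\eta$.'' This is false: although $\tau=2\alpha\eta$ with $\alpha$ constant, its covariant derivative along the map is \emph{tangential}, namely $\nabla^\varphi_{X_i}\tau=2\alpha\,\overline{\nabla}_{X_i}\eta=-2\alpha A(X_i)$ by the Weingarten formula, so the term equals $-4\alpha^2\sum_i\overline{R}(A(X_i),\eta)X_i$, which is generically nonzero. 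The paper computes it explicitly in \eqref{pezzo-nabla-tau}, and it is precisely the source of several terms visible in the final statement: the $-4\alpha(4m-\ell^2)\langle A(E_3^\top),E_3^\top\rangle$ contribution to the $\eta$-coefficient, the $4\alpha^2$ summand in the $E_3^\top$-coefficient, the term $-4\alpha^2(4m-\ell^2)\nu A(E_3^\top)$, and part of the $\alpha^2\ell^2$ and $m\alpha^2$ terms. Dropping this term would therefore not merely lose a correction of measure zero -- it would produce a formula that disagrees with \eqref{tau3-general-case} whenever $\alpha\neq 0$ and $4m\neq\ell^2$.

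Aside from this, the bookkeeping strategy you describe (collecting coefficients of $\eta$, $E_3^\top$, $A(E_3^\top)$, $A^2(E_3^\top)$, $J(A(E_3^\top))$, $A(J(E_3^\top))$, $A(\grad|A|^2)$ and eliminating $|E_3^\top|^2=1-\nu^2$) is exactly what the paper does; once the third curvature term is reinstated and evaluated via \eqref{tensore-curvatura-general-expression}, the rest of your plan goes through.
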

\begin{proof}
The explicit expression of the curvature tensor field is given in \eqref{tensore-curvatura-general-expression} and so we have all the ingredients to compute the $3$-tension field \eqref{3-tension}. More in detail:

\textbf{(I)} $\overline{\Delta}^2 \tau$ is given in Lemmata\link\ref{Lemma-Delta2-tau} and \ref{Lemma-Delta-V}.

\textbf{(II)} Here we compute the first curvature term
\[
\overline{R} \left(\overline{\Delta} \tau(\varphi), d \varphi (e_i)\right ) d \varphi (e_i) \,,
\]
which we rewrite as 
\begin{equation}\label{tau3-pezzo-II}
\sum_{i=1}^2 \overline{R} \left((\overline{\Delta} \tau)^\perp, X_i\right ) X_i+\sum_{i=1}^2 \overline{R} \left((\overline{\Delta} \tau)^\top, X_i\right ) X_i \,,
\end{equation}
where, according to \eqref{Delta-tau}, we have
\begin{equation}\label{Delta-tau-perp-top}
(\overline{\Delta} \tau)^\perp=2 \alpha\, |A|^2 \eta\,, \quad  (\overline{\Delta} \tau)^\top=2 \alpha\,(4m-\ell^2)\,V \,.
\end{equation}
Then, using the general expression \eqref{tensore-curvatura-general-expression} for the curvature tensor field, we find:
\begin{eqnarray}\label{pezzo-Delta-tau-perp}
\sum_{i=1}^2 \overline{R} \left((\overline{\Delta} \tau)^\perp, X_i\right ) X_i&=&2 \alpha |A|^2 \Big \{ 2 \left ( 4m -\frac{3 \ell^2}{4}\right )\eta \\\nonumber
&& \qquad \qquad -(4 m - \ell^2) \big [(|E_3^\top|^2 + 2 \nu^2)\eta + \nu E_3^\top \big ] \Big \} \,.
\end{eqnarray}
\begin{eqnarray}\label{pezzo-Delta-tau-top}
\sum_{i=1}^2 \overline{R} \left((\overline{\Delta} \tau)^\top, X_i\right ) X_i&=&2 \alpha (4 m - \ell^2)\nu  \Big \{ \left ( 4m -\frac{3 \ell^2}{4}\right )\,E_3^\top\\\nonumber
&& \qquad \qquad \qquad  -(4 m - \ell^2) \big [|E_3^\top|^2 E_3^\top + |E_3^\top|^2  \nu \, \eta  \big ] \Big \} \,.
\end{eqnarray}
Adding up the results in \eqref{pezzo-Delta-tau-perp} and \eqref{pezzo-Delta-tau-top} we can easily handle the term in \textbf{(II)}.

\textbf{(III)} Here we deal with the other curvature term, i.e.,
\[
\overline{R} \Big( \nabla^\varphi_{e_i}\, \tau(\varphi),  \tau(\varphi)\Big ) d \varphi (e_i)\,.
\]
Using again the general expression \eqref{tensore-curvatura-general-expression} for the curvature tensor field and observing that
\[
\nabla^\varphi_{X_i} \tau=- 2 \alpha A (X_i)
\]
we find (slight abuse of notation: we identify $X_i$ and $d \varphi(X_i)$):
\begin{eqnarray}\label{pezzo-nabla-tau}\nonumber
\sum_{i=1}^2 \overline{R} \left(\nabla_{X_i}^\varphi \tau, \tau \right ) X_i&=& -4\alpha^2 \sum_{i=1}^2 \overline{R} \left(A(X_i), \eta \right ) X_i\\
&=&-4 \alpha^2\Big \{-2 \alpha  \left ( 4m -\frac{3 \ell^2}{4}\right )\eta \\
&& +(4 m - \ell^2) \Big [\nu A(E_3^\top)-\langle A(E_3^\top),E_3^\top\rangle \eta -2 \alpha \nu E_3^\top-2 \alpha \nu^2 \eta\Big ] \Big \}\nonumber
\end{eqnarray}
Using the results obtained in the three cases \textbf{(I)}, \textbf{(II)}, \textbf{(III)} we can easily compute the $3$-tension field described explicitly in \eqref{3-tension}. More precisely, after some routine simplifications we obtain \eqref{tau3-general-case} (where we have used that $|E_3^\top|^2=1-\nu^2$) and so the proof of Proposition\link\ref{Prop-tau3-general-expression} is completed.
\end{proof}
\begin{proof}[{\bf Proof of Theorem\link\ref{Th-triharm-isop-BCV-spaces}}] According to Theorem\link\ref{Th-isop-BCV-spaces}, we just have to study the three possible cases (iv)(a),(b),(c). Case (iv)(a) is analysed in detail in Theorem\link\ref{Th-HopfcilindroCMC-espiliciti} and provides examples of proper triharmonic surfaces. By contrast, horizontal slices of the type (iv)(b) are totally geodesic and therefore there exists no proper triharmonic surface of this type. By way of summary, the proof will be complete if we show that any parabolic helicoid $P_{\alpha,m,\ell}$ as in Theorem\link\ref{Th-isop-BCV-spaces} (iv)(c) cannot be proper triharmonic. Despite the simplicity of this plan for the proof, the involved computations are quite long and will be carried out using the \textit{half-space model} for $M^3_{m,\ell}$. More precisely, recalling that in the case of parabolic helicoids $m<0$ by assumption, we shall work in
$$
\tilde{M}^3_{m,\ell}=\left ( \{(\tilde x,\tilde y,\tilde z)\in \R^3 \,: \, \tilde y>0\},\,\frac{d\tilde x^2+d\tilde y^2}{-4m \tilde y^2} +\left ( d\tilde z+\frac{\ell}{4m\tilde y}d\tilde x \right )^2\right )\,.
$$
Now, we recall (see \cite{manzano-torralbo}) some basic facts about the half-space model. We have an explicit isometry $\Theta:M^3_{m,\ell}\to\tilde{M}^3_{m,\ell}$ given by
\[
\Theta (x,y,z)=\left (\frac{2y}{\sqrt{-m}G^2},\frac{1+m(x^2+y^2)}{-mG^2},z+\frac{\ell}{2m}\arccos \left( \frac{y}{G}\right) \right )\,,
\]
where we have set
\[
G=\sqrt{\left ( \frac{1}{\sqrt{-m}}+x\right )^2+y^2} \,.
\]
Moreover, an explicit positively oriented global orthonormal frame field on the half-space model is:
\[
\tilde{E}_1= 2 \sqrt{-m}\,\tilde y \,\partial_{\tilde x}+\frac{\ell}{2 \sqrt{-m}}\,\partial_{\tilde z}\,, \quad \tilde{E}_2=2 \sqrt{-m}\,\tilde y \,\partial_{\tilde y}\,,\quad \tilde{E}_3= \partial_{\tilde z} \,.
\]
For future use, we observe that $d \Theta(E_3)=\tilde{E}_3$. Next, we compute
\[
[\tilde{E}_1,\tilde{E}_2]=-2\sqrt{-m}\tilde{E}_1+\ell \tilde{E}_3 \,; \quad [\tilde{E}_1,\tilde{E}_3]=[\tilde{E}_2,\tilde{E}_3]=0\,.
\]
Then, using the Koszul identity \eqref{Koszul}, it is easy to verify that the version of \eqref{CNil} in this context is:
\begin{equation}\label{CNil-half-space}
\begin{array}{lll}
\overline{\nabla}_{\tilde{E}_1}\tilde{E}_1=2 \sqrt{-m}\tilde{E}_2\,, & \overline{\nabla}_{\tilde{E}_1}\tilde{E}_2=-2 \sqrt{-m}\tilde{E}_1+\dfrac{\ell}{2}\tilde{E}_3\,, &\overline{\nabla}_{\tilde{E}_1}\tilde{E}_3=-\dfrac{\ell}{2}\tilde{E}_2\,, \vspace{2mm}\\
\overline{\nabla}_{\tilde{E}_2}\tilde{E}_1=-\dfrac{\ell}{2}\tilde{E}_3\,,  & \overline{\nabla}_{\tilde{E}_2}\tilde{E}_2= 0&\overline{\nabla}_{\tilde{E}_2}\tilde{E}_3=\dfrac{\ell}{2}\tilde{E}_1\,, \vspace{2mm} \\
\overline{\nabla}_{\tilde{E}_3}\tilde{E}_1=-\dfrac{\ell}{2}\tilde{E}_2\,,  & \overline{\nabla}_{\tilde{E}_3}\tilde{E}_2=\dfrac{\ell}{2}\tilde{E}_1\,, &\overline{\nabla}_{\tilde{E}_3}\tilde{E}_3= 0\,.
\end{array} 
\end{equation}
The parabolic helicoids $P_{\alpha,m,\ell}$ (see \cite{MR4288655}) are the CMC surfaces in $\tilde{M}^3_{m,\ell}$ parametrized by
\[
X(u,v)=\left(u,v,a \log v \right ) \,,\quad v>0 \,,
\]
where $a$ is non-vanishing real constant whose relation with the geometrical parameters $\alpha,m,\ell$ will be made explicit in \eqref{a-in-terms-of-alpha} below.

We have to verify that a parabolic helicoid $P_{\alpha,m,\ell}$, $\alpha\neq 0$, cannot be triharmonic. For this purpose, we apply Proposition\link\ref{Prop-tau3-general-expression}. In order to compute all the terms which appear in the expression of the $3$-tension field $\tau_3$ (see \eqref{tau3-general-case} it is convenient to express and compute all the relevant quantities with respect to the global orthonormal frame field $\{\tilde{E}_1,\tilde{E}_2,\tilde{E}_3 \}$. Writing $[\tilde{x}_1,\tilde{x}_2,\tilde{x}_3]$ for $\tilde{x}_1\tilde{E}_1+\tilde{x}_2\tilde{E}_2+\tilde{x}_3\tilde{E}_3$, we compute:
\begin{eqnarray}\label{XuXveta}
X_u&=&\left [\frac{1}{2 \sqrt{-m}\,v},0,\frac{\ell}{4mv} \right ] \\\nonumber
X_v&=&\left [0,\frac{1}{2 \sqrt{-m}\,v},\frac{a}{v} \right ] \\\nonumber
\eta&=&\frac{1}{L} \Big [\ell,4am,2\sqrt{-m}\Big ]\,,\nonumber
\end{eqnarray}
where we have set
\begin{equation}\label{eq:defofL}
L=\sqrt{\ell^2 + 4 m (4 a^2 m-1)}
\end{equation}
Note that 
$$
\nu=\eta \cdot \tilde{E}_3=\frac{2\sqrt{-m}}{L}
$$ 
and so we can compute $E_3^\top=\tilde{E}_3-\nu \eta$. We find
\[
E_3^\top=\left [-\frac{2 \ell \sqrt{-m}}{L^2},\frac{8 a (-m)^{3/2}}{L^2},\frac{4 m}{L^2}+1 \right ] \,.
\]
Moreover,
\[
J(E_3^\top)=\eta \times E_3^\top=\left [\frac{4am}{L},-\frac{\ell}{L},0 \right ] \,.
\]
Now, we recall that $A(X)=-\overline{\nabla}_X \eta$ and $J(X)=\eta \times X$ for all $X$ tangent to $P_{\alpha,m,\ell}$. Next, we observe that all the coefficients of $\eta,E_3^\top,J(E_3^\top)$ are constant. Therefore, using \eqref{CNil-half-space} it is not difficult to compute:

\begin{equation}\label{pezzi-per-tau3}
\begin{aligned}
A(E_3^\top)&=\left [-\frac{2 a \ell m}{L},\frac{\ell^2}{2 L},0 \right ]\\
A(J(E_3^\top))&=\left [ \frac{\sqrt{-m} \left(32 a^2 m^2+\ell^2\right)}{L^2},\frac{4 a \ell
   (-m)^{3/2}}{L^2},-\frac{16 a^2 \ell m^2+\ell^3}{2 L^2}\right ]\\
   A^2(E_3^\top)&=\left [-\frac{\ell^2 \sqrt{-m}}{L^2},\frac{4 a \ell (-m)^{3/2}}{L^2},\frac{16
   a^2 \ell m^2+\ell^3}{2 L^2} \right]\\
  J(A(E_3^\top)) &=\left [ -\frac{\ell \sqrt{-m} \left(32 a^2 m^2+\ell^2\right)}{2 L^2},-\frac{2 a
   \ell^2 (-m)^{3/2}}{L^2},\frac{16 a^2 \ell^2 m^2+\ell^4}{4 L^2}\right]
   \end{aligned}
\end{equation}
Using \eqref{pezzi-per-tau3}, we are in the right position to compute the mean curvature $\alpha$ and $|A|^2$. We have
\begin{equation}\label{alpha-in-terms-of-a}
2\alpha=\frac{1}{|E_3^\top|^2}\big(\langle A(E_3^\top), E_3^\top\rangle+\langle A(J(E_3^\top)), J(E_3^\top)\rangle\big)=\frac{8 a (-m)^{3/2}}{L}. 
\end{equation}
Inverting \eqref{alpha-in-terms-of-a}, together with \eqref{eq:defofL}, we deduce that the relationship between $a$ and the geometrical parameters $\alpha, \ell,m$ is:
\begin{equation}\label{a-in-terms-of-alpha}
a=\frac{\alpha}{4m} \sqrt{\frac{4m- \ell^2}{m+\alpha^2}}.
\end{equation}
As for $|A|^2$, using \eqref{a-in-terms-of-alpha} and \eqref{eq:defofL} a straight computation returns
\begin{equation}\label{valueofA2}
|A|^2=\frac{1}{|E_3^\top|^2}\big(\langle A(E_3^\top), A(E_3^\top)\rangle+\langle A(J(E_3^\top)), A(J(E_3^\top))\rangle\big)=\frac{1}{2}(8\alpha^2 + \ell^2).
\end{equation}
Next, using \eqref{pezzi-per-tau3}, \eqref{a-in-terms-of-alpha} and  \eqref{valueofA2} all the terms in \eqref{tau3-general-case} can be easily computed and, after adding up and simplifying, we find
\[
\tau_3=
  \alpha\,\left[\frac{1}{\sqrt{-m}}\sqrt{\frac{\alpha ^2+m}{4m-\ell^2}}\,T1,\frac{\alpha}{\sqrt{-m}}\,T2,-2\sqrt{\frac{\alpha ^2+m}{4m-\ell^2}}\,T3 \right ] \,,
\]
where
\begin{eqnarray*}
T1&=&\ell \Big [ \ell^4 + \ell^2 (-16 m +6 \alpha^2) + 
 32  (2 m^2 - 3 m\alpha^2 -2 \alpha^4)\Big]\,;\\
T2&=& \ell^4 + \ell^2 (-16 m +6 \alpha^2) + 
 32  (2 m^2 - 3 m\alpha^2 -2 \alpha^4)\,;\\
T3&=&\ell^4  + \ell^2 (-8 m + 2 \alpha^2)+ 64 \,\alpha^2 (m + \alpha^2)\,.
\end{eqnarray*}
By way of summary, a parabolic helicoid $P_{\alpha,m,\ell}$ is proper triharmonic if and only if
\begin{equation}\label{systemT1T2T3=0}
\left \{ 
\begin{array}{l}
T1=0 \\
T2=0 \\
T3=0 \,.
\end{array}
\right .
\end{equation}
Now, if $\ell=0$, the third equation of system \eqref{systemT1T2T3=0} becomes
\[
64\,\alpha^2 (m+\alpha^2)=0 \,,
\]
which has not relevant solutions since in our construction $m+\alpha^2<0$ by hypothesis.

Next, we handle the case $\ell\neq 0$. First, we observe that the condition $T2-T3=0$ implies
\begin{equation}\label{ell^2}
\ell^2= \frac{-8(-2m^2+5m \alpha^2 +4\alpha^4)}{2m-\alpha^2}\,.
\end{equation}
Now, replacing this value of $\ell^2$ into $T2$ and simplifying we find that necessarily
\[
(2m+3\alpha^2) (-2m^2+9m\alpha^2+8\alpha^4)=0\,.
\]
Because $m<0$, the only possibilities are:
\begin{equation}\label{m}
m=-\frac{3 \alpha^2}{2} \quad {\rm or}\quad m= \frac{\alpha^2}{4}(9-\sqrt{145})\,.
\end{equation}
The first value for $m$ is not acceptable because, if we replace it into \eqref{ell^2}, we find
\[
\ell^2=-16 \alpha^2 \,,
\]
a contradiction. As for the second value of $m$ into \eqref{m}, it suffices to observe that it would imply $m+\alpha^2 >0$, a fact which contradicts our assumption.

Therefore, there exists no proper triharmonic parabolic helicoid $P_{\alpha,m,\ell}$ and so the proof of  Theorem\link\ref{Th-triharm-isop-BCV-spaces} is completed.
\end{proof}
\section{Proper CMC $r$-harmonic Hopf cylinders}\label{sec-r-harmonic}
In this section we focus on the study of proper CMC $r$-harmonic Hopf cylinders. We shall show that the existence of such submanifolds depends not only on the curvature of the ambient space, but also on the value of $r$. This will be illustrated in Corollary~\ref{Cor-r-harmonic-hopf-cilindri} which is a consequence of the following theorem.
\begin{theorem}\label{Th-r-harmonic-hopf-cilindri} Assume that $r \ge2 $. Let $\Sigma$ be a non minimal CMC Hopf cylinder in a BCV-space $M^3_{m,\ell}$. Then $\Sigma$ is proper $r$-harmonic if and only if the geodesic curvature $\kappa_g$ of its base curve $\gamma$ is a non zero constant which verifies
\begin{equation}\label{cond-r-harmonicity-cilindri}
\kappa_g^4  +  \left [-4 m ( r-1) + \frac{3 \ell^2 r}{4}\right ]\kappa_g^2- \frac{\ell^2}{2} (4 m-\ell^2)=0\,.
\end{equation}
\end{theorem}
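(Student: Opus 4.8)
The plan is to derive the $r$-tension field of a CMC Hopf cylinder directly from the general $r$-tension formulas \eqref{2s-tension} and \eqref{2s+1-tension}, exploiting the fact that for a Hopf cylinder $\tau = -\kappa_g\eta$ with $\kappa_g$ constant. First I would observe that, since $\kappa_g$ is constant, Lemma~\ref{lemma-Deltatau-cilindro} simplifies dramatically: $A = B = 0$ and $C = -(\ell^2/2)\kappa_g - \kappa_g^3$, so $\overline{\Delta}\tau = C\eta$ with $C$ a constant multiple of $\eta$. The key structural fact to establish is that the three-dimensional space spanned by $\{X, E_3, \eta\}$ is invariant under $\overline{\Delta}$ and under the relevant curvature operators $\overline{R}(\cdot, X)X$ and $\overline{R}(\cdot, E_3)E_3$ (the two tangent directions to $\Sigma$), by virtue of Lemma~\ref{lemma-nablaXX} and the explicit curvature expressions \eqref{BCV1}. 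Concretely, I would compute $\overline{\Delta}\eta$, $\overline{\Delta}X$, $\overline{\Delta}E_3$ and show that iterated rough Laplacians of $\eta$ stay in $\mathrm{span}\{\eta, E_3\}$ (and, after one application, pick up no $X$ component when starting from $\eta$, because of the symmetry of the frame derivatives).

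The heart of the argument is to recognize that $\overline{\Delta}$ acts on $\mathrm{span}\{\eta, E_3\}$ as a fixed $2\times 2$ matrix with constant entries (read off from Lemma~\ref{lemma-nablaXX}: one computes $\overline{\Delta}\eta$ and $\overline{\Delta}E_3$ in this basis), and similarly the curvature terms $\overline{R}(\,\cdot\,, d\varphi(e_i))d\varphi(e_i)$ and $\overline{R}(\nabla^\varphi_{e_i}\tau, \tau)d\varphi(e_i)$ are constant-coefficient linear maps on this span — indeed Lemma~\ref{lemma-R1-R2} already records exactly these for the triharmonic case. Thus the entire $r$-tension field $\tau_r$ becomes a vector in $\mathrm{span}\{\eta, E_3\}$ whose coordinates are polynomials in $\kappa_g, \ell^2, m$. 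I would then extract the $r$-harmonicity condition $\tau_r = 0$ as two polynomial equations. The expectation, guided by Proposition~\ref{prop-3tension-hopfcilidro} (where the $X$-component vanishes identically for constant $\kappa_g$ and the $E_3$-component is $\tfrac{\ell}{2}[4\kappa_g^{(3)} - (\ell^2 + 9\kappa_g^2)\dot\kappa_g] = 0$ automatically), is that for any $r$ the $X$-component and $E_3$-component vanish identically when $\kappa_g$ is constant, leaving a single scalar equation from the $\eta$-component.

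To pin down that scalar equation for general $r$ I would proceed inductively: set up a recursion for the coefficients appearing in $\overline{\Delta}^k \tau$ and $\nabla^\varphi_{e_i}\overline{\Delta}^k\tau$ within the invariant span, then feed these into \eqref{2s-tension}/\eqref{2s+1-tension}. The cleanest route is probably to diagonalize (or at least triangularize) the relevant operator on $\mathrm{span}\{\eta, E_3\}$ and track how the summed curvature terms in the Maeta formula telescope; one anticipates that most cross terms in the $\sum_\ell$ sums cancel or combine, leaving a contribution linear in $r$. Comparing with the known cases $r=2$ (equation \eqref{hn45} specialized via Theorem~\ref{CVV}, giving $\kappa_g^2$ satisfying the $r=2$ instance of \eqref{cond-r-harmonicity-cilindri}) and $r=3$ (equation \eqref{kappag-CMCcilidro3armonico} together with Proposition~\ref{prop-3tension-hopfcilidro}: when $\kappa_g$ is constant the $\eta$-component of \eqref{tension3-Hopfcilindro} reads $\tfrac14 \kappa_g[-(\ell^2 + 4\kappa_g^2)(2\ell^2 - 8m + \kappa_g^2)] = 0$, which is exactly $r=3$ in \eqref{cond-r-harmonicity-cilindri} after factoring) provides a strong consistency check and fixes the coefficient of $r$.

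The main obstacle will be the bookkeeping in the induction: verifying that the $X$- and $E_3$-components of $\tau_r$ indeed vanish for constant $\kappa_g$ for \emph{all} $r$, and that the curvature-term sums in Maeta's formula collapse to the clean linear-in-$r$ expression $-4m(r-1) + \tfrac{3\ell^2 r}{4}$ multiplying $\kappa_g^2$. A subtlety worth flagging is the degenerate case $4m - \ell^2 = 0$ (space forms of nonnegative curvature), where the constant term $-\tfrac{\ell^2}{2}(4m-\ell^2)$ vanishes and \eqref{cond-r-harmonicity-cilindri} factors as $\kappa_g^2(\kappa_g^2 + [-4m(r-1) + \tfrac{3\ell^2 r}{4}]) = 0$; the nonzero root must then be checked to be positive, recovering the known sphere examples. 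Finally, one must separately verify the equivalence "proper" $\Leftrightarrow$ $\kappa_g \neq 0$, i.e. that $\kappa_g = 0$ corresponds exactly to the minimal (totally geodesic fiber-slice) case, which follows since $\tau = -\kappa_g\eta$.
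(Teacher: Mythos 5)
Your proposal follows essentially the same route as the paper: since $\kappa_g$ is constant, Lemma~\ref{lemma-Deltatau-cilindro} gives $\overline{\Delta}\tau=-\kappa_g\bigl(\kappa_g^2+\tfrac{\ell^2}{2}\bigr)\eta$, and iterating shows $\overline{\Delta}^k\tau=-\kappa_g c^k\eta$ with $c=|A|^2$ constant; feeding this and the constant curvature values \eqref{terminidicurvaturaespliciti} into Maeta's formulas, every summand of the inner curvature sum contributes the same constant, which is exactly what produces the term linear in $r$ in \eqref{cond-r-harmonicity-cilindri}. One inaccuracy worth correcting, though it is not load-bearing: $\mathrm{span}\{\eta,E_3\}$ is \emph{not} $\overline{\Delta}$-invariant (for a constant coefficient $B$ one has $\overline{\Delta}(BE_3)=-\tfrac{\ell}{2}B\kappa_g X+\tfrac{\ell^2}{4}BE_3$, which has an $X$-component whenever $\ell\kappa_g\neq0$), so the ``fixed $2\times2$ matrix'' framing fails; what saves the argument --- and what the paper actually uses --- is the stronger and simpler fact that $\eta$ itself is an eigenvector of $\overline{\Delta}$ with eigenvalue $c$, so the iterated Laplacians of $\tau$ never leave $\mathrm{span}\{\eta\}$.
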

\begin{remark} From \eqref{lemma-nablaXX} (ii) and (iv), the norm $|A|^2$ of the shape operator of $\Sigma$ is (see also \cite{Ou-J-Geom-Phys-2011}):
\begin{equation}\label{q1}
|A|^2=\kappa_g^2+\frac{\ell^2}{2}\,.
\end{equation}
Using \eqref{q1}, condition \eqref{cond-r-harmonicity-cilindri} is equivalent to
\begin{equation}\label{cond-r-harmonicity-cilindri-bis}
|A|^4-\left (4m-\frac{\ell^2}{2} \right )|A|^2-(r-2)\left (4m-\frac{3\ell^2}{4} \right )\kappa_g^2=0\,.
\end{equation}
Thus, setting $r=2$ into \eqref{cond-r-harmonicity-cilindri-bis}, it is immediate to recover the result of Ou \cite{Ou-J-Geom-Phys-2011} concerning CMC biharmonic Hopf cylinders.
Also, in the special case $r=3$ in \eqref{cond-r-harmonicity-cilindri}, it is easy to recover the statement of Theorem\link\ref{Th-HopfcilindroCMC-espiliciti}.
\end{remark}
\vspace{2mm}

In the following corollary we shall indicate for which values of the parameters $\ell$, $m$ and $r$ there are acceptable solutions  of \eqref{cond-r-harmonicity-cilindri}. We suggest that the reader keeps in mind the geometrical counterpart of the cases (i), (ii), (iii) of Corollary\link\ref{Cor-r-harmonic-hopf-cilindri} referring to the diagram in Figure~\ref{diagram-BCVspaces}.

\begin{corollary}\label{Cor-r-harmonic-hopf-cilindri}
Let $M^3_{m,\ell}$  be a BCV-space. Then there exists a proper CMC $r$-harmonic Hopf cylinder if and only if one of the following holds:
\begin{enumerate} 
\item[{\rm (i)}] $4m-\ell^2>0$ and $r\geq 2$;
\item[{\rm (ii)}] $4m-\ell^2=0$, $\ell\neq 0$ and $r\geq 5$;
\item[{\rm (iii)}] $4m-\ell^2<0$, $\overline{R}_{1212}=4m-{3 \ell^2}/{4}>0$ and 
\begin{equation}\label{limit-value-for-r}
r\geq \frac{4 \left(\sqrt{2} \sqrt{\ell^4-4 \ell^2 m}+4
   m\right)}{16 m -3 \ell^2}\,.
\end{equation}
\end{enumerate}
\end{corollary}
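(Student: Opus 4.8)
The plan is to reduce the corollary to an elementary discussion of a quadratic polynomial. By Theorem~\ref{Th-r-harmonic-hopf-cilindri}, for a fixed BCV-space $M^3_{m,\ell}$ and a fixed $r\ge 2$ there exists a proper CMC $r$-harmonic Hopf cylinder if and only if \eqref{cond-r-harmonicity-cilindri} admits a solution $\kappa_g$ which is a nonzero real number (any such value is realised as the constant geodesic curvature of a curve in $M^2(4m)$, and the associated Hopf cylinder then does the job). Writing $u=\kappa_g^2$ and using the identity $-4m(r-1)+\frac{3\ell^2 r}{4}=4m-r\,\overline{R}_{1212}$, equation \eqref{cond-r-harmonicity-cilindri} becomes the quadratic $Q(u)=u^2+b\,u+c=0$ with
\[
b=4m-r\,\overline{R}_{1212}\,,\qquad c=-\frac{\ell^2}{2}\,(4m-\ell^2)\,,
\]
so everything comes down to deciding when $Q$ has a strictly positive root, which I would read off from the sum $-b$ and product $c$ of its roots and from the discriminant $b^2-4c$. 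I would organise the discussion according to the sign of $4m-\ell^2$, i.e.\ the sign of $-c$.

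If $4m-\ell^2>0$ then $c\le 0$: for $\ell\neq 0$ the product of the roots is negative, so $Q$ has exactly one positive root for every $r$; for $\ell=0$ one has $c=0$ and $b=-4m(r-1)<0$ when $r\ge 2$, so $u=-b>0$ is a root. In either case a (strictly positive) solution exists for all $r\ge 2$, which is case~(i). If $4m-\ell^2=0$ then $c=0$ and $Q(u)=u(u+b)$, so a strictly positive root exists exactly when $b<0$; here $\ell=0$ forces $m=0$ and $b=0$ (no solution), while $\ell\neq 0$ forces $4m=\ell^2$, hence $\overline{R}_{1212}=m=\ell^2/4>0$ and $b=m(4-r)$, which is negative precisely for $r\ge 5$. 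This is case~(ii).

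It remains to treat $4m-\ell^2<0$, where $c\ge 0$ and a strictly positive root of $Q$ exists if and only if $b<0$ \emph{and} $b^2-4c\ge 0$. I would first dispose of the subcase $\overline{R}_{1212}\le 0$ (which includes $\ell=0$, since then $\overline{R}_{1212}=4m<0$): evaluating $b=4m-r\,\overline{R}_{1212}$ at $r=2$ and using $\overline{R}_{1212}\le 0$ yields $b>0$ for every $r\ge 2$, so there is no solution. In the remaining subcase $\overline{R}_{1212}>0$ one necessarily has $\ell\neq 0$ and $16m-3\ell^2>0$, whence $\ell^4-4m\ell^2=\ell^2(\ell^2-4m)>0$; now $b<0$ reads $r>4m/\overline{R}_{1212}$, while $b^2-4c$ is an upward parabola in $r$ with leading coefficient $\overline{R}_{1212}^2$, hence nonnegative exactly outside the open interval with endpoints $r_\pm=\bigl(4m\pm\sqrt{2}\sqrt{\ell^4-4m\ell^2}\bigr)/\overline{R}_{1212}$. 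Since $r_-<4m/\overline{R}_{1212}<r_+$, the two conditions together are equivalent to $r\ge r_+$, and substituting $\overline{R}_{1212}=(16m-3\ell^2)/4$ turns $r\ge r_+$ into exactly \eqref{limit-value-for-r}; this is case~(iii), and the proof is complete. The only genuine computation is the evaluation of the discriminant of $Q$ as a polynomial in $r$ together with the verification of the inequalities $r_-<4m/\overline{R}_{1212}<r_+$; I expect this discriminant bookkeeping, rather than anything conceptual, to be the main (modest) obstacle.
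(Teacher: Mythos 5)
Your proposal is correct and follows essentially the same route as the paper: reduce via Theorem~\ref{Th-r-harmonic-hopf-cilindri} to the existence of a positive root of the quadratic \eqref{cond-r-harmonicity-cilindri-x} in $x=\kappa_g^2$, and then discuss cases according to the sign of $4m-\ell^2$ (the paper first disposes of $m\le 0$ by noting both coefficients are then nonnegative, while you absorb that into your case analysis via Vieta's formulas, but the substance is identical). Your discriminant computation and the identification of the threshold $r_+$ with \eqref{limit-value-for-r} match the paper's "straightforward check" in case (iii).
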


\begin{remark}
The assumptions  $4m-\ell^2<0$ and $\overline{R}_{1212}=4m-{3 \ell^2}/{4}>0$ in Case (iii) of Corollary\link\ref{Cor-r-harmonic-hopf-cilindri} have a geometrical meaning because they state that the ambient space is $SU(2)$ endowed with a metric with positive sectional curvature. Here we point out that from the analytical view point these hypotheses are equivalent just to the condition $(3/4)\ell^2 < 4m < \ell^2$ which corresponds to the region between the two parabolas $4m = (3/4) \ell^2$ and $4m=\ell^2$ (see Figure~\ref{diagram-BCVspaces_hopf-cylinders1}). It is convenient to describe this region as the union $\cup_{a} \gamma_a,\, 3/4<a<1$, where $\gamma_a$ is the parabola $4m=a\ell^2$. Now, on $\gamma_a$ the lower bound for $r$  in \eqref{limit-value-for-r} becomes 
\[
r_a=\frac{4 \left(a+\sqrt{2-2 a}\right)}{4 a-3}\,.
\]
We observe that on $(3/4,1)$ $r_a$ is a strictly decreasing function of $a$ with $\lim_{a\to3/4^+}r_a=+\infty$ and $\lim_{a\to 1^-}r_a=4$. Therefore, the more we approach $\gamma_{3/4}$, the larger $r$ must be in order to have a proper CMC $r$-harmonic Hopf cylinder. 
\end{remark}

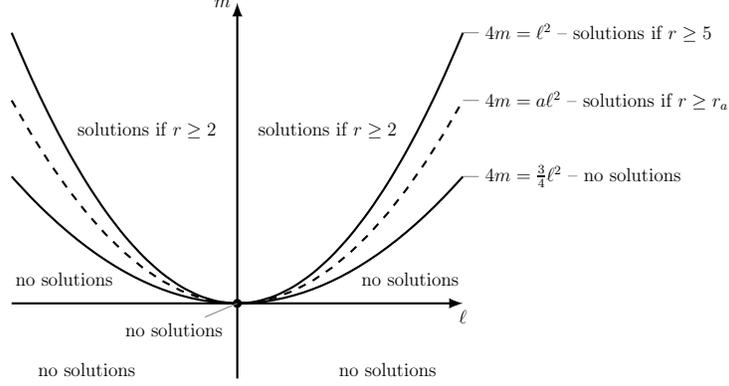
\begin{figure}[h]
\caption{Distribution of proper CMC $r$-harmonic Hopf cylinders in BCV-spaces w.r.t. to the values of $\ell$, $m$ and $r$.} 
\label{diagram-BCVspaces_hopf-cylinders1}

\begin{tikzpicture}
\draw[thick, -latex] (-3,0) -- (3,0) node[below,scale=0.6] {$\ell$};
\draw[thick, -latex] (0,-1) -- (0,4) node[left,scale=0.6] {$m$};

\draw[scale=1, domain=-3:3, smooth, variable=\x,  thick] plot ({\x}, {0.4*\x*\x});
\coordinate[pin={[pin distance=10,scale=0.6]0:$4m=\ell^2$ -- solutions if $r\geq 5$}] (r1) at (3,0.4*3*3);

\draw[scale=1, domain=-3:3, smooth, variable=\x,  thick] plot ({\x}, {0.1875*\x*\x});
\coordinate[pin={[pin distance=10,scale=0.6]0:$4m=\frac{3}{4}\ell^2$ -- no solutions}] (r1) at (3,0.1875*3*3);

\draw[scale=1, domain=-3:3, dashed, variable=\x,  thick] plot ({\x}, {0.3*\x*\x});
\coordinate[pin={[pin distance=10,scale=0.6]0:$4m=a \ell^2$ -- solutions if $r\geq r_a$}] (r1) at (3,0.3*3*3);

\node[below,scale=0.6] at (1.2,2.5) {solutions if $r\geq 2$};
\node[below,scale=0.6] at (-1.2,2.5) {solutions if $r\geq 2$};
\node[below,scale=0.6] at (2.3,0.5) {no solutions};
\node[below,scale=0.6] at (-2.3,0.5) {no solutions};
\node[below,scale=0.6] at (2.,-.7) {no solutions};
\node[below,scale=0.6] at (-2.,-.7) {no solutions};
\filldraw [black] (0,0) circle (1.5pt);
 \coordinate[pin={[pin distance=10,scale=0.6]240:no solutions}] (r1) at (0,0);
\end{tikzpicture}
\end{figure}

Now, we prove the results of this section.

\begin{proof}[\bf{Proof of Theorem\link\ref{Th-r-harmonic-hopf-cilindri}}]We know that $\tau=-\kappa_g \eta$ and $\kappa_g$ is constant. To simplify the notation, we set
\[
c=\kappa_g^2 + \frac{\ell^2}{2} \quad (=|A|^2) \,.
\]
We know from Lemma\link\ref{lemma-Deltatau-cilindro} that $\overline{\Delta}\tau=-\kappa_g c \eta$ and then we deduce that\begin{equation}\label{Deltaktau}
\overline{\Delta}^r\tau=-\kappa_g c^r \eta \,.
\end{equation}
Next, using Lemma\link\ref{lemma-nablaXX}, we obtain:
\begin{equation}\label{nablaDeltaktau}
\overline{\nabla}_X (\overline{\Delta}^r\tau)=-\kappa_g^2 c^r X+\frac{\ell}{2}\kappa_g c^r E_3\,; \qquad \overline{\nabla}_{E_3} (\overline{\Delta}^r\tau)=\frac{\ell}{2}\kappa_g c^r X\,. 
\end{equation}
We shall also need the following equalities which can easily by derived from \eqref{tensore-curvatura-general-expression}:
\begin{equation}\label{terminidicurvaturaespliciti}
\overline{R}(\eta,X)X=\left(4m-\frac{3 \ell^2}{4} \right ) \eta\, ; \quad 
\overline{R}(\eta,E_3)E_3=\frac{\ell^2}{4}  \eta\, ; \quad \overline{R}(X,\eta)E_3 =0 \,.
\end{equation}
Now, using \eqref{Deltaktau}, \eqref{nablaDeltaktau} and \eqref{terminidicurvaturaespliciti}, we can compute the $r$-tension field whose expression is given in \eqref{2s-tension}, \eqref{2s+1-tension}. We have:
\begin{eqnarray*}\label{2s-tension-hopfcilindro}
\tau_{2s}(\varphi)&=&\overline{\Delta}^{2s-1}\tau(\varphi)-R^N \left(\overline{\Delta}^{2s-2} \tau(\varphi), d \varphi (e_i)\right ) d \varphi (e_i) \nonumber\\ 
&&  - \sum_{p=1}^{s-1}\, \left \{R^N \left( \nabla^\varphi_{e_i}\,\overline{\Delta}^{s+p-2} \tau(\varphi), \overline{\Delta}^{s-p-1} \tau(\varphi)\right ) d \varphi (e_i)  \right .\\ \nonumber
&& \qquad \qquad  -\, \left . R^N \left( \overline{\Delta}^{s+p-2} \tau(\varphi),\nabla^\varphi_{e_i}\, \overline{\Delta}^{s-p-1} \tau(\varphi)\right ) d \varphi (e_i)  \right \} \\
&=&-\kappa_g c^{2s-1}\eta+\kappa_g c^{2s-2}\Big [\overline{R}(\eta,X)X+ \overline{R}(\eta,E_3)E_3\Big ]\\
&&- \sum_{p=1}^{s-1}\, \Big \{\kappa_g^3 c^{2s-3}\overline{R}(X,\eta)X-\frac{\ell}{2}\kappa_g^2 c^{2s-3}\overline{R}(E_3,\eta)X\\
&&\qquad\quad  \quad -\frac{\ell}{2}\kappa_g^2 c^{2s-3}\overline{R}(X,\eta)E_3-\kappa_g^3 c^{2s-3}\overline{R}(\eta,X)X \\
&&\qquad \quad \quad+\frac{\ell}{2}\kappa_g^2 c^{2s-3}\overline{R}(\eta,E_3)X-\frac{\ell}{2}\kappa_g^2 c^{2s-3}\overline{R}(\eta,X)E_3 \Big \}\\
&=&-\kappa_g c^{2s-1}\eta+\kappa_g c^{2s-2}\Big [\left(4m-\frac{3 \ell^2}{4} \right ) \eta+\frac{\ell^2}{4}  \eta\Big ]\\
&&-c^{2s-3} \sum_{p=1}^{s-1}\, \Big \{-\kappa_g^3\left(4m-\frac{3 \ell^2}{4} \right ) \eta-\kappa_g^3\left(4m-\frac{3 \ell^2}{4} \right ) \eta\Big \}\\
&=&-\kappa_g c^{2s-3} \Big [c^2- \left(4m-\frac{ \ell^2}{2} \right ) c-(2s-2)\left(4m-\frac{3 \ell^2}{4} \right ) \kappa_g^2\Big ] \eta\,.
\end{eqnarray*}
It follows from the last equality that the Hopf cylinder $\Sigma$ is proper $r$-harmonic ($r=2s$) if and only if
\begin{equation}\label{cond-r-harmonicity-cilindri-tris}
c^2-\left (4m-\frac{\ell^2}{2} \right )c-(r-2)\left (4m-\frac{3\ell^2}{4} \right )\kappa_g^2=0\,.
\end{equation}
Finally, using $c=\kappa_g^2+(\ell^2\slash2)$ in \eqref{cond-r-harmonicity-cilindri-tris} we obtain \eqref{cond-r-harmonicity-cilindri}.

The computations for $\tau_{2s+1}$ are analogous and so we omit further details.
\end{proof}
\begin{proof}[{\bf Proof of Corollary\link\ref{Cor-r-harmonic-hopf-cilindri}}] We will proceed by an accurate  analysis of the dependence of the roots of equation \eqref{cond-r-harmonicity-cilindri} on the values of the parameters $\ell$, $m$ and $r$.
Putting $x=k_g^2$ in \eqref{cond-r-harmonicity-cilindri}, the existence of a proper $r$-harmonic CMC Hopf cylinder is equivalent to the existence of a positive solution of the equation
\begin{equation}\label{cond-r-harmonicity-cilindri-x}
x^2  +  \left [-4 m ( r-1) + \frac{3 \ell^2 r}{4}\right ]x- \frac{\ell^2}{2} (4 m-\ell^2)=0\,.
\end{equation}
We divide the analysis in a series of cases.

If $m\leq 0$, then the coefficients of \eqref{cond-r-harmonicity-cilindri-x} are nonnegative and thus there exists no positive solution.

Thus, from now on, we assume that $m>0$. Then we have the following subcases:

\begin{itemize}
\item[(i)] If $4m -\ell^2 > 0$, then there exists a unique positive solution $\kappa_g^2$ of \eqref{cond-r-harmonicity-cilindri-x} for all $r\geq2$.
\item[(ii)] If $4m -\ell^2 = 0$, then equation \eqref{cond-r-harmonicity-cilindri-x} admits the positive solution
\[
\kappa_g^2= \frac{r-4}{4}\,
\] 
if and only if $r \geq5$. We point out that this result is in accordance with the discussion in  Remark~1.3 of \cite{MR3711937}.
\item[(iii)] If $4m -\ell^2 < 0$, then replacing $\overline{R}_{1212}=4m-{3 \ell^2}/{4}$ in \eqref{cond-r-harmonicity-cilindri-x} we obtain
\begin{equation}\label{cond-r-harmonicity-cilindri-x-R}
x^2  +  \left [4 m -r \overline{R}_{1212}\right]x- \frac{\ell^2}{2}\overline{R}_{1212}+\frac{\ell^4}{8}=0\,.
\end{equation}
Thus, if $\overline{R}_{1212}\leq 0$ there exists no acceptable solution. Finally,  if $\overline{R}_{1212}=4m-{3 \ell^2}/{4}>0$, then a straightforward check shows that \eqref{cond-r-harmonicity-cilindri-x} admits two distinct positive solutions if and only if
\begin{equation}\label{r-2-cilindri}
r> \frac{4 \left(\sqrt{2} \sqrt{\ell^4-4 \ell^2 m}+4
   m\right)}{16 m -3 \ell^2} \,,
\end{equation}
and only one positive solution when 
\begin{equation}\label{r-1-cilindro}
r=\frac{4 \left(\sqrt{2} \sqrt{\ell^4-4 \ell^2 m}+4
   m\right)}{16 m -3 \ell^2}\,.
\end{equation}
\end{itemize}
\end{proof}
\begin{remark}
(i) We observe that, for all $r \geq 5$, there always exist suitable couples $m,\ell$ such that \eqref{r-2-cilindri}, or \eqref{r-1-cilindro}, is verified. In the case that \eqref{r-2-cilindri} holds the corresponding two solutions give rise to two \textit{non-congruent} $r$-harmonic Hopf cylinders.

(ii) Concerning the case $4m -\ell^2 > 0$, if $r=2$ we find $\kappa_g^2=4m-\ell^2$. On the other hand, Ou and Wang (see Theorem\link\ref{CVV}~(iii)) find that $\gamma$ must be a circle of radius
\begin{equation}\label{eq:ou-wang}
R=\frac{1}{\sqrt{8m-\ell^2}}
\end{equation}
in the base space $\s^2\left (\frac{1}{2 \sqrt m} \right)=M^2(4m)$. In order to check that our result is coherent with that of Ou and Wang, we observe that a curve of constant curvature $\kappa_g$ in $\s^2(\rho)$ is a plane curve in $\R^3$  with constant curvature $\kappa$ and radius
\begin{equation}\label{eq:link-kg-k}
R=\frac{1}{\kappa}=\frac{\rho}{\sqrt{\kappa_g^2 \rho^2+1}}\,.
\end{equation}
Substituting $\rho=1/(2\sqrt{m})$ and $\kappa_g^2=4m-\ell^2$ in \eqref{eq:link-kg-k} we recover \eqref{eq:ou-wang}.
\end{remark}


\end{document}